\def\cleardoublepage{\clearpage\if@twoside \ifodd\c@page\else%
         \hbox{}%
     \thispagestyle{empty}
     \newpage%
     \if@twocolumn\hbox{}\newpage\fi\fi\fi}
\let\cleardoublepage\clearpage
\newtheorem{thm}{Theorem}[section]
\newtheorem{cor}[thm]{Corollary}
\newtheorem{lem}[thm]{Lemma}
\newtheorem{pro}[thm]{Proposition}
\newtheorem{defi}[thm]{Definition}
\newtheorem{rem}[thm]{Remark}
\numberwithin{equation}{section}
\newcommand{\dive}{\operatorname{div}}
\newcommand{\dx}{{\rm d}x}
\newcommand{\dy}{{\rm d}y}
\newcommand{\dz}{{\rm d}z}
\newcommand{\dt}{{\rm d}t}
\newcommand{\ds}{{\rm d}s}
\newcommand{\dv}{{\rm d}v}
\newcommand{\dS}{{\rm d}S}
\begin{document}

\title[Uniqueness of very weak solutions for a fractional filtration equation ]{Uniqueness of very weak solutions \\ for a fractional filtration equation \\ }

\author {Gabriele Grillo} \author{Matteo Muratori}
\author{Fabio Punzo}

\address {Gabriele Grillo, Matteo Muratori, Fabio Punzo: Dipartimento di Matematica, Politecnico di Milano, Piaz\-za Leonardo da Vinci 32, 20133 Milano, Italy}
\email{gabriele.grillo@polimi.it}
\email{matteo.muratori@polimi.it}
\email{fabio.punzo@polimi.it}


%
%
%
%


\maketitle

\scriptsize
\noindent{\bf Abstract}. We prove existence and uniqueness of distributional, bounded solutions to a fractional filtration equation in ${\mathbb R}^d$. With regards to uniqueness, it was shown even for more general equations in \cite{TJJ} that if two bounded solutions $u,w$ of \eqref{eq6} satisfy $u-w\in L^1({\mathbb R}^d\times(0,T))$, then $u=w$. We obtain here that this extra assumption can in fact be removed and establish uniqueness in the class of merely bounded solutions. For nonnegative initial data, we first show that a minimal solution exists and then that any other solution must coincide with it. A similar procedure is carried out for sign-changing solutions. As a consequence, distributional solutions have locally-finite energy.



\normalsize

\section{Introduction}

We consider the following problem:
\begin{equation}\label{eq6}
\begin{cases}
u_t=-(-\Delta)^s \, \Phi(u) & \text{in } \mathbb{R}^d\times \mathbb{R}^+ \, , \\
u=u_0 & \text{on } \mathbb{R}^d\times\{0\} \, ,
\end{cases}
\end{equation}
where $ s \in (0,1) $ and $ u_0 $ is a bounded initial datum. As concerns the nonlinearity $\Phi$ we assume the following condition:
\begin{equation}\label{hp-phi}\tag{H}
\Phi: \mathbb{R} \to \mathbb{R} \ \textit{is nondecreasing and locally Lipschitz} \, .
\end{equation}
We stress that \it strict \rm monotonicity of $\Phi$ is \it not \rm required. For example, the Stefan-type nonlinearity $\Phi(u)=(u-1)_+$ is admissible. Without further assumptions on $ \Phi $, such equation can be referred to as \emph{fractional filtration equation}, according to the terminology adopted in \cite{EK} in the local case $ s=1 $. A typical example is $\Phi(u)=u|u|^{m-1}$ with $m\ge1$; in this case and when $m>1$ equation \eqref{eq6} is usually known as the \it fractional porous medium equation, \rm the latter having been introduced and studied in \cite{Vaz11,Vaz12}, see also the review papers \cite{VR, VR2}.

We recall that the $s$-fractional Laplacian is defined, at least on test functions $ \varphi \in C^\infty_c(\mathbb{R}^d) $, by the formula
\begin{equation}\label{def-frac}
(-\Delta)^s \varphi(x) := c_{d,s} \ \mathrm{p.v.} \int_{\mathbb{R}^d} \frac{\varphi(x)-\varphi(x^\prime)}{|x-x^\prime|^{d+2s}} \, \dx^\prime \, ,
\end{equation}
where
$$
c_{d,s} := \frac{2^{2s}s\,\Gamma\!\left(\frac{d}{2}+s \right)}{\pi^{\frac{d}{2}}\Gamma(1-s)} \, .
$$

Well-posedness of problem \eqref{eq6} when $\Phi(u)=u|u|^{m-1}\; (m>1)$ is satisfactorily achieved in the case of \it energy solutions, \rm see \cite{Vaz12}, namely solutions belonging to a suitable fractional Sobolev space (we refer to \cite{BPSV,BSV2,PV} for uniqueness results in the linear case). However, this class in general excludes solutions corresponding to data which are merely required to be bounded. For the latter, \it distributional \rm solutions should be considered instead. In this case, well-posedness was thoroughly studied in \cite{TJJ}, and the same authors then provided successful numerical schemes for such equations in \cite{TJJ1,TJJ2}. In fact, a large class of operators and of nonlinearities was addressed in \cite{TJJ} and later in \cite{TJJ3}, where distributional, \it energy \rm solutions corresponding to data in $L^1(\mathbb{R}^d)\cap L^\infty(\mathbb{R}^d)$ are considered. As concerns the nonlinearities, it is feasible to take in \cite{TJJ} the function $\Phi(u)= \operatorname{sign}(u) |u|^m$ in the full range $m>0$, thus including the \it fractional fast diffusion equation\rm. In particular, also non-Lipschitz nonlinearities are treated.
One of the most important results of \cite{TJJ} states that two bounded solutions $u,w$ to \eqref{eq6} coincide up to some time $ T>0 $ under the extra assumption
\begin{equation}\label{L1}
u-w\in L^1({\mathbb R}^d\times(0,T)) \, .
\end{equation}
This kind of condition first appeared in the \it local \rm case $s=1$ in \cite{BC}. On the other hand, it should be noted that \eqref{L1} was later proved to be inessential, see \cite{BCP}.

Our main goal here is to show, by using a strategy of proof which is completely different from the one of \cite{TJJ} since it involves the extension method of Caffarelli-Silvestre \cite{CS}, that condition \eqref{L1} can be dropped also in the \emph{nonlocal} case, upon assuming that $\Phi$ sa\-tisfies hypothesis \eqref{hp-phi}. This yields uniqueness in the natural class of merely bounded solutions, see Theorem \ref{teouni}. Such a result is achieved by first proving a delicate comparison principle in Euclidean balls, which is then used to establish existence of a \it minimal \rm nonnegative solution for nonnegative initial data. Subsequently, we prove that all solution to \eqref{eq6} corresponding to a given bounded, nonnegative initial datum coincides with the minimal one. Since the minimal solution is shown to have \emph{locally-finite energy} (see Corollary \ref{ext}), this clearly entails that there exist no purely distributional solutions, i.e.~any distributional solution has a locally-finite energy. Sign-changing solutions are addressed afterwards, by similar techniques that allow us to resort to the case of nonnegative solutions.

In the pure porous-medium case, namely for $\Phi(u)=u|u|^{m-1}$ with $ m>1 $, existence of energy solutions was shown in \cite{Vaz12} for $L^1$ data (see \cite{BFV,BSV, BV1,BV2} for the same equation studied on regular domains), whereas existence of distributional (energy) solutions for more general $\Phi$ and operators more general than $ (-\Delta)^s $ was proved in \cite{TJJ, TJJ3}, at least for data in $L^1(\mathbb{R}^d)  \cap L^\infty(\mathbb{R}^d) $. After the first version of the present paper was completed, in \cite{AiB} the authors proved that uniqueness holds for problem \eqref{eq6} without requiring that $ \Phi $ is locally Lipschitz, but in the significantly smaller class of \emph{entropy solutions}.

We remark that fractional, nonlinear diffusion problems of the type studied here arise in several applied models, for example and without any claim of completeness we mention that crossovers between fractional and local diffusions are investigated e.g.\ in \cite{BGT, LMT}, that hydrodynamic limits of particle systems with long-range dynamics lead to fractional diffusion equations which can be either linear or nonlinear, see e.g.\ \cite{JKO,JLS}, and that such kind of equations also arise in boundary heat control \cite{AC}.

\subsection{Existence and uniqueness results}

We start by introducing the definition of distributional, or very weak, solution to problem \eqref{eq6}.

\begin{defi}\label{defsol2}
Let $ s \in (0,1) $, $ u_0 \in L^\infty(\mathbb{R}^d) $ and $ \Phi $ satisfy \eqref{hp-phi}. We say that a measurable function $u$ is a \emph{very weak} solution to problem \eqref{eq6}  if $u\in L^\infty(\mathbb{R}^d\times \mathbb{R}^+)$ and for a.e.~$T>0$ there holds
\begin{equation}\label{eq7}
\begin{aligned}
\int_0^T\int_{\mathbb{R}^d} u \, \varphi_t
\,\dx\dt = & \int_0^T\int_{\mathbb{R}^d}\Phi(u) \, (-\Delta)^s\varphi\,\dx\dt \\
& +\int_{\mathbb{R}^d}u(x,T) \, \varphi(x,T)\,\dx-\int_{\mathbb{R}^d}u_0(x) \, \varphi(x,0)\,\dx
\end{aligned}
\end{equation}
for every $\varphi\in C_c^{\infty}\!\left(\mathbb{R}^d \times[0,T]\right)$.
\end{defi}

From here on by ``solution'' to \eqref{eq6} we will implicitly mean a very weak solution in the sense of Definition \ref{defsol2}, unless otherwise specified.

\smallskip
Our main existence and uniqueness results are the following.

\begin{thm}[Existence of bounded solutions]\label{thmesi-sign-ch}
Let $ s \in (0,1) $, $ u_0 \in L^\infty(\mathbb{R}^d) $ and $ \Phi $ satisfy \eqref{hp-phi}. Then problem \eqref{eq6} admits a solution according to Definition \ref{defsol2}.
\end{thm}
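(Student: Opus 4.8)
The plan is to build the solution by approximation, reducing first to nonnegative data and then invoking the well-posedness theory of \cite{TJJ,TJJ3} for data in $L^1(\mathbb{R}^d)\cap L^\infty(\mathbb{R}^d)$, before passing to the limit by monotonicity and dominated convergence.

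\emph{Reduction to $u_0\ge 0$.} Set $M:=\|u_0\|_{L^\infty(\mathbb{R}^d)}$. A function $u$ is a very weak solution of \eqref{eq6} with nonlinearity $\Phi$ and datum $u_0$ if and only if $v:=u+M$ is a very weak solution with nonlinearity $\tilde\Phi(r):=\Phi(r-M)$ and datum $v_0:=u_0+M$; indeed, inserting $v=u+M$ in \eqref{eq7} the extra constant produces $M\int_{\mathbb{R}^d}\left[\varphi(\cdot,T)-\varphi(\cdot,0)\right]\dx$ on both sides (the support of $\varphi$ in space being compact), while $\Phi(u)=\tilde\Phi(v)$ leaves the nonlocal term untouched. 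Since $\tilde\Phi$ still satisfies \eqref{hp-phi} and $0\le v_0\le 2M$, it suffices to prove the theorem when $u_0$ is bounded and nonnegative, which we henceforth assume. This is exactly the reduction ``to the case of nonnegative solutions'' alluded to in the Introduction.

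\emph{Approximation and monotonicity.} Let $u_{0,n}:=u_0\,\chi_{B_n}$, so that $u_{0,n}\in L^1(\mathbb{R}^d)\cap L^\infty(\mathbb{R}^d)$, $0\le u_{0,n}\le u_{0,n+1}\le u_0$, and $u_{0,n}\to u_0$ a.e.\ and in $L^1_{\mathrm{loc}}(\mathbb{R}^d)$. By \cite{TJJ,TJJ3} there exists, for each $n$, a very weak (energy) solution $u_n$ to \eqref{eq6} with datum $u_{0,n}$, satisfying the $L^\infty$ bound $0\le u_n\le\|u_{0,n}\|_{L^\infty(\mathbb{R}^d)}\le M$ and the comparison principle established there; since $u_{0,n}\le u_{0,n+1}$, comparison gives $u_n\le u_{n+1}$ a.e.\ in $\mathbb{R}^d\times\mathbb{R}^+$. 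Hence $u_n\uparrow u$ pointwise a.e.\ for some measurable $u$ with $0\le u\le M$, so $u\in L^\infty(\mathbb{R}^d\times\mathbb{R}^+)$; moreover $u_n\to u$ in $L^1_{\mathrm{loc}}(\mathbb{R}^d\times\mathbb{R}^+)$ by monotone convergence, and, $\Phi$ being continuous, $\Phi(u_n)\to\Phi(u)$ a.e.\ with $|\Phi(u_n)|\le\max_{[0,M]}|\Phi|$.

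\emph{Passage to the limit.} Each $u_n$ satisfies \eqref{eq7} for $T$ outside a null set $\mathcal{N}_n$; fix $T\notin\mathcal{N}:=\bigcup_n\mathcal{N}_n$ and $\varphi\in C_c^\infty(\mathbb{R}^d\times[0,T])$. In $\int_0^T\!\!\int_{\mathbb{R}^d}u_n\,\varphi_t$, in $\int_{\mathbb{R}^d}u_n(x,T)\varphi(x,T)\,\dx$ (using that $u_n(\cdot,T)\uparrow u(\cdot,T)$ a.e.\ for a.e.\ $T$), and in $\int_{\mathbb{R}^d}u_{0,n}\varphi(x,0)\,\dx$ (where $u_{0,n}=u_0$ on the bounded support of $\varphi(\cdot,0)$ for $n$ large) one passes to the limit by dominated/monotone convergence, exploiting the uniform bound by $M$ and the compact support of $\varphi$. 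For the nonlocal term, $(-\Delta)^s\varphi(\cdot,t)$, though not compactly supported, decays like $|x|^{-d-2s}$ and hence lies in $L^1(\mathbb{R}^d)$ uniformly for $t\in[0,T]$; since $\Phi(u_n)\to\Phi(u)$ a.e.\ and is bounded, dominated convergence yields $\int_0^T\!\!\int_{\mathbb{R}^d}\Phi(u_n)(-\Delta)^s\varphi\to\int_0^T\!\!\int_{\mathbb{R}^d}\Phi(u)(-\Delta)^s\varphi$. Thus $u$ satisfies \eqref{eq7} for every $T\notin\mathcal{N}$, and undoing the shift of the first step gives the desired very weak solution.

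\emph{Main difficulty.} The delicate points are the correct quotation from \cite{TJJ,TJJ3} of the $L^\infty$ estimate and of the comparison principle for the approximating solutions (which make the monotone scheme run) and the handling of the time-slice term for a.e.\ $T$. Should comparison for that class not be directly citable, an alternative is to deduce compactness of $\{u_n\}$ in $L^1_{\mathrm{loc}}(\mathbb{R}^d\times(0,T))$ from uniform $L^1$-contraction in space-translations together with a uniform local energy estimate and an Aubin–Lions argument; one may also, more in the spirit of the rest of the paper, construct $u$ directly from solutions on balls via the Caffarelli–Silvestre extension \cite{CS} and let the radius diverge.
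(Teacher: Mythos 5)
Your argument is correct in outline, but it follows a genuinely different route from the paper's. The paper also starts with a vertical shift (Proposition \ref{translation}, with $c=\operatorname{ess}\inf u_0$ and, crucially, with $\hat\Phi(v)=\Phi(v+c)-\Phi(c)$, so that $\hat\Phi(0)=0$), but it does \emph{not} quote \cite{TJJ,TJJ3} for the approximating problems: it constructs the solution for the truncated data $u_0\chi_{B_k}$ as an increasing limit, as $R\to\infty$, of solutions to the Dirichlet problem \eqref{eq17f} in balls for the \emph{spectral} fractional Laplacian (Crandall--Liggett, Proposition \ref{ex-R}), the monotonicity in $R$ resting on the comparison machinery of Section \ref{mini} built on the Caffarelli--Silvestre extension. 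That longer detour is what simultaneously yields the \emph{minimal} solution of Proposition \ref{thmesi} and the local energy estimate \eqref{eq410} behind Corollary \ref{ext}, both indispensable for the uniqueness proof; your construction delivers existence only, by outsourcing the $L^1\cap L^\infty$ theory (existence, $L^\infty$-bound, comparison --- the latter applicable to $u_n$ and $u_{n+1}$ because both are integrable, so their difference satisfies the $L^1$ condition \eqref{L1} required in \cite{TJJ}) and then passing to the monotone limit much as the paper does in the final step of the proof of Proposition \ref{thmesi}. For Theorem \ref{thmesi-sign-ch} taken in isolation your shorter route is legitimate. One repair is needed: your $\tilde\Phi(r)=\Phi(r-M)$ need not vanish at $0$, whereas the $L^1$/energy framework of \cite{TJJ,TJJ3} is set up for normalized nonlinearities; replace it by $\tilde\Phi(r)=\Phi(r-M)-\Phi(-M)$, which leaves the very weak formulation unchanged because $\int_{\mathbb{R}^d}(-\Delta)^s\varphi\,\dx=0$ --- this is precisely the point verified in the proof of Proposition \ref{translation}.
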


\begin{thm}[Uniqueness of bounded solutions]\label{teouni}
Let the assumptions of Theorem \ref{thmesi-sign-ch} hold. Then the solution to \eqref{eq6} is unique.
\end{thm}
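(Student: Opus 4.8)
The plan is to reduce uniqueness to a comparison statement between an arbitrary solution and a distinguished \emph{minimal} solution, and to prove that comparison by localizing the equation onto Euclidean balls via the Caffarelli--Silvestre extension. First I would treat the case of nonnegative initial data $u_0\ge0$. The starting point is the construction (announced in the introduction, to be carried out via the comparison principle in balls and Corollary \ref{ext}) of a minimal nonnegative solution $\underline{u}$ to \eqref{eq6}: it is obtained as an increasing limit of solutions of problems posed on expanding balls $B_R$ with homogeneous exterior data, and by monotonicity in $R$ and the maximum principle it is dominated by every other nonnegative solution. The crux is then to show the reverse inequality, namely that any very weak solution $w\ge0$ with the same datum satisfies $w\le\underline{u}$; combined with minimality this forces $w=\underline{u}$, hence uniqueness.

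The heart of the matter is the comparison principle in balls. Given a solution $w$ on $\mathbb{R}^d\times\mathbb{R}^+$, I would lift $\Phi(w)$ to the half-space $\mathbb{R}^d\times(0,\infty)$ through the $s$-harmonic extension, obtaining a function whose trace is $\Phi(w)$ and whose weighted normal derivative at $y=0$ reproduces $(-\Delta)^s\Phi(w)$ up to a constant; this converts \eqref{eq6} into a local (degenerate-elliptic, $A_2$-weighted) problem to which Stampacchia-type truncation and energy estimates apply. On a ball $B_R$ one compares $w$ with the solution $u_R$ of the Dirichlet-type problem with exterior datum equal to the larger of $w$ and $\underline u$ (or simply $+\|w\|_\infty$), using the Kato-type inequality
\[
\partial_t (w-u_R)_+ \le -(-\Delta)^s\big(\Phi(w)-\Phi(u_R)\big)\,\mathbf{1}_{\{w>u_R\}}
\]
in the weak/extended sense, testing against a suitable cutoff, and exploiting monotonicity of $\Phi$ to kill the bad sign term. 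Letting $R\to\infty$ and controlling the error from the exterior region — which is where the boundedness of $w$ (and a decay estimate for the fundamental solution / extension kernel away from $B_R$) is used in place of the $L^1$ hypothesis \eqref{L1} — yields $w\le\underline u$. This last passage, making the exterior contributions vanish in the limit using only $w\in L^\infty$, is the step I expect to be the main obstacle: the $L^1$ assumption in \cite{TJJ} was precisely what trivialized it, and replacing it requires a careful quantitative estimate on how much mass the nonlocal operator transports across $\partial B_R$, uniformly in time on $[0,T]$.

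Finally I would remove the sign restriction. Given a general bounded datum $u_0$, pick a constant $k\ge\|u_0\|_\infty$ so that $u_0+k\ge0$; if $w$ solves \eqref{eq6} with datum $u_0$, then $w+k$ is a nonnegative bounded very weak solution of the same type of equation with the shifted nonlinearity $\widetilde\Phi(\cdot):=\Phi(\cdot-k)$, which still satisfies \eqref{hp-phi} since $(-\Delta)^s$ annihilates constants. Applying the nonnegative uniqueness result to $w+k$ gives uniqueness for $w$, and Theorem \ref{thmesi-sign-ch} guarantees at least one solution exists, completing the proof. The only care needed here is that the shifted datum and the shifted nonlinearity fall within the hypotheses used for the nonnegative case, which they do by construction.
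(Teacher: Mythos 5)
Your overall architecture (minimal solution for nonnegative data, then reduction of the signed case by adding a constant) matches the paper's, and your one-sided comparison $\underline u\le w$ is indeed what Proposition \ref{comppr} delivers via the duality method. However, the decisive step --- showing that an arbitrary nonnegative very weak solution $w$ satisfies $w\le\underline u$ --- is exactly where your plan has a genuine gap. You propose to squeeze $w$ from above by comparing it in $B_R$ with solutions of Dirichlet-type problems with large exterior data, using a Kato inequality $\partial_t(w-u_R)_+\le\dots$ and Stampacchia truncation. For merely bounded distributional solutions this is not available: $w$ has no a priori energy, so one cannot test the equation with $\operatorname{sign}_+(w-u_R)$ or its truncations, and even if one could, the family of ball problems with exterior datum $\|w\|_\infty$ would produce a \emph{maximal}-type limit object which you would then still have to identify with $\underline u$ --- that identification is the whole difficulty and is not addressed. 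The paper avoids this entirely: it never proves a comparison from above in balls. Instead, knowing $w\ge\underline u$, it tests the very weak formulation for the difference against $h\gamma_R$, where $h(x)\sim(1+|x|^{\alpha})^{-1}$ with $\alpha\in(d,d+2s)$ so that $|(-\Delta)^s h|\le C\,h$ (by the decay $|(-\Delta)^sh|\lesssim(1+|x|^{d+2s})^{-1}$), controls the commutator terms via refined estimates on $(-\Delta)^s\gamma_R$ and on the bilinear form $\mathcal Q(h,\gamma_R)$, and then closes a Gronwall inequality for $\int(w-\underline u)\,h\,\dx$ using the local Lipschitz bound $0\le\Phi(w)-\Phi(\underline u)\le L(w-\underline u)$. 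The sign $w-\underline u\ge0$ and the Lipschitz property of $\Phi$ are both essential here; your sketch replaces this with a quantitative boundary-flux estimate that you yourself identify as the main obstacle but do not supply.

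A second, smaller gap is in the reduction of the signed case: shifting by $k\ge\|u_0\|_\infty$ makes the shifted \emph{datum} nonnegative but not necessarily the shifted \emph{solutions}, and the nonnegative-case argument is applied to solutions, not data. One must shift by the essential infimum of the two solutions over $\mathbb{R}^d\times\mathbb{R}^+$ (finite by boundedness), and normalize the nonlinearity to $\hat\Phi(v)=\Phi(v+c)-\Phi(c)$ so that $\hat\Phi(0)=0$, which is needed for the ball-problem machinery to preserve nonnegativity.
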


It is well known that fractional Laplacians can be represented through suitable \emph{extension} operators, see \cite{CS,ST}. Indeed, let $v\in L^\infty(\mathbb{R}^d)$ and consider its $2s$-harmonic extension $E(v)$ given by
\[
E(v)(\cdot,y)=P_s(\cdot,y)\ast v \qquad \forall y>0 \, ,
\]
where
\[
P_s(x,y) := \kappa_{d,s} \, \frac{y^{2s}}{\left|(x,y)\right|^{d+2s}} \qquad \forall x \in {\mathbb R}^d  \, , \ \forall y>0 \, , \qquad \kappa_{d,s} := \left[ \int_{\mathbb{R}^d} \left|(x,1)\right|^{-d-2s} \dx \right]^{-1}
\]
is the Poisson kernel of $ (-\Delta)^s $, see \cite{CSi,CS}. It will be shown that, if $ u $ is the solution constructed in Theorem \ref{thmesi-sign-ch}, then there holds
\begin{equation}\label{local energy}
\nabla E(\Phi(u))\in L^2_{\rm{loc}}\!\left(\overline{\Omega} \times [0,\infty)\right) ,
\end{equation}
where $\Omega:={\mathbb R}^d\times(0,\infty)$, namely $u$ turns out to be what is usually referred to as a \it local weak \rm solution (see Remark \ref{oss2}). Hence, in view of Theorem \ref{teouni}, it follows that \emph{any} very weak solution is in fact a local weak solution. More precisely, we have the following.

\smallskip
In the sequel, we will denote the fractional Sobolev space $ \dot{H}^s(\mathbb{R}^d) $ as the closure of $ C_c^\infty(\mathbb{R}^d) $ with respect to the norm $ \| (-\Delta)^{{s}/{2}}(\cdot) \|_{L^2(\mathbb{R}^d)} $. In particular, it is easy to see that, as a consequence of \eqref{local energy}, for any cut-off function $ \gamma_R $ as in \eqref{eq:cutoff} there holds
\begin{equation}\label{hsloc}
\gamma_R \, \Phi(u) \in L^2_{\mathrm{loc}}([0,\infty);\dot{H}^s(\mathbb{R}^d)) \, .
\end{equation}

\begin{cor}\label{ext}
Let the assumptions of Theorem \ref{thmesi-sign-ch} hold and let $u$ be a very weak solution to problem \eqref{eq6} according to Definition \ref{defsol2}. Then $u$ is a local weak solution, in the sense that property \eqref{local energy} holds. Hence, also \eqref{hsloc} is satisfied.
\end{cor}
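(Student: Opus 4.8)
The plan is to combine Theorem \ref{teouni} with the energy estimate \eqref{local energy} that is established (in the paper's main body) for the particular solution constructed in Theorem \ref{thmesi-sign-ch}. Concretely, let $u$ be an arbitrary very weak solution to \eqref{eq6} in the sense of Definition \ref{defsol2}, with the given datum $u_0 \in L^\infty(\mathbb{R}^d)$. By Theorem \ref{thmesi-sign-ch} there exists at least one solution $\tilde u$ for which \eqref{local energy} has been verified, i.e. $\nabla E(\Phi(\tilde u)) \in L^2_{\mathrm{loc}}(\overline{\Omega}\times[0,\infty))$. By the uniqueness statement of Theorem \ref{teouni}, any two very weak solutions with the same datum coincide, so $u = \tilde u$ as elements of $L^\infty(\mathbb{R}^d\times\mathbb{R}^+)$; hence $\Phi(u) = \Phi(\tilde u)$ a.e., and therefore $E(\Phi(u)) = E(\Phi(\tilde u))$ (the extension operator $E$ being determined by convolution with the Poisson kernel $P_s$, it depends only on the a.e.\ equivalence class of its argument). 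Consequently $\nabla E(\Phi(u)) \in L^2_{\mathrm{loc}}(\overline{\Omega}\times[0,\infty))$, which is precisely \eqref{local energy} for $u$; this already establishes that $u$ is a local weak solution.

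It remains to deduce \eqref{hsloc} from \eqref{local energy}. For this I would argue as follows. Fix a cut-off $\gamma_R$ as in \eqref{eq:cutoff} and a time interval $[0,\tau]$. The function $w := E(\Phi(u))$ is $2s$-harmonic in $\Omega$ with trace $\Phi(u)$ on $\{y=0\}$, and $\nabla w$ lies in the weighted space $L^2_{\mathrm{loc}}(\overline{\Omega}\times[0,\tau])$; in particular, on a slab ${\mathbb R}^d \times (0,1)$ intersected with a large ball, the weighted Dirichlet energy $\iint y^{1-2s}|\nabla w|^2$ is finite for a.e.\ $t$ and integrable in $t$ over $[0,\tau]$. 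By the trace characterization of $\dot H^s(\mathbb{R}^d)$ through the Caffarelli–Silvestre extension (see \cite{CS,ST}) — more precisely, by the localized version of this correspondence applied to the compactly supported function $\gamma_R \Phi(u)$, whose harmonic extension is controlled by that of $\Phi(u)$ on the support of $\gamma_R$ together with lower-order terms — one gets $\|(-\Delta)^{s/2}(\gamma_R\Phi(u)(\cdot,t))\|_{L^2(\mathbb{R}^d)}^2 \lesssim \iint_{B_{2R}\times(0,1)} y^{1-2s}|\nabla w(\cdot,t)|^2 + \text{(l.o.t. in } w, \Phi(u)\text{)}$, and integrating in $t\in[0,\tau]$ yields the membership $\gamma_R\Phi(u) \in L^2([0,\tau];\dot H^s(\mathbb{R}^d))$, hence \eqref{hsloc}.

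I expect the genuinely substantive input to be \eqref{local energy} itself, which is proved elsewhere in the paper for the minimal/constructed solution; given that and Theorem \ref{teouni}, the transfer to an arbitrary very weak solution is immediate. The only mildly delicate point in the present argument is the passage \eqref{local energy}$\,\Rightarrow\,$\eqref{hsloc}: one must commute the cut-off $\gamma_R$ with the (nonlocal) operator $(-\Delta)^{s/2}$, which is handled by working on the extension side, where multiplication by $\gamma_R$ interacts with $\nabla$ via the product rule and the error terms are absorbed using the local finiteness of $\iint y^{1-2s}|\nabla E(\Phi(u))|^2$ and the boundedness of $\Phi(u)$ (together with finite measure of the relevant compact sets). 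This is essentially the content of Remark \ref{oss2}, to which the statement refers for the meaning of ``local weak solution''.
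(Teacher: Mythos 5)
Your proposal is correct and follows essentially the same route as the paper (which proves this corollary through Remark \ref{oss2}): the constructed/minimal solution inherits the local energy bound \eqref{eq410} in the limit, uniqueness from Theorem \ref{teouni} transfers \eqref{local energy} to an arbitrary very weak solution, and \eqref{hsloc} follows by working on the extension side. For the last step the paper phrases it slightly more directly — $\gamma_R\,E(\Phi(u))$ is itself an admissible finite-energy extension of $\gamma_R\,\Phi(u)$, so the variational characterization of $\|\cdot\|_{\dot H^s}$ gives the bound without comparing harmonic extensions — but this is the same mechanism you describe.
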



We comment that a crucial step in our method of proof consists of showing the existence of a \it minimal \rm solution to problem \eqref{eq6}, for nonnegative data and for $\Phi$ satisfying also $\Phi(0)=0$.
\begin{pro}[Existence of the minimal solution for nonnegative data]\label{thmesi}
Let $ s \in (0,1) $, $ u_0 \in L^\infty(\mathbb{R}^d) $ with $ u_0 \ge 0 $ and let $ \Phi $ satisfy \eqref{hp-phi} with $\Phi(0)=0$. Then problem \eqref{eq6} admits a \emph{minimal} solution, i.e.~there exists a {nonnegative} solution $ \underline{u} $ to \eqref{eq6} such that, if $ u $ is any {nonnegative} solution to \eqref{eq6} according to Definition \ref{defsol2}, there holds $ \underline{u} \le u $.
\end{pro}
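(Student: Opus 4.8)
The plan is to build the minimal solution by an exhaustion procedure on Euclidean balls, solving approximate Dirichlet problems whose solutions are forced to vanish outside an expanding sequence of balls, and then pass to the limit using the comparison principle that (the excerpt tells us) has already been established in balls. First I would fix $ R>0 $ and consider the problem for the fractional filtration equation posed on $ B_R $ (in the sense of the spectral or restricted fractional Laplacian on the ball, consistently with the extension framework of Caffarelli--Silvestre used throughout the paper), with homogeneous exterior/Dirichlet condition $ u_R \equiv 0 $ on $ \mathbb{R}^d \setminus B_R $ and initial datum $ u_0\lfloor_{B_R} $. Since $ \Phi $ is nondecreasing, locally Lipschitz and $ \Phi(0)=0 $, and $ u_0 $ is bounded, such a problem has a (unique) bounded energy solution $ u_R\ge 0 $ by standard monotone-operator / Crandall--Liggett theory on the bounded domain; boundedness follows by comparison with the constant supersolution $ \| u_0 \|_\infty $ (note $ \Phi(\mathrm{const}) $ is a stationary solution and $ (-\Delta)^s $ applied to a constant extended by that same constant on the ball behaves appropriately), and nonnegativity follows by comparison with $ 0 $, using $ \Phi(0)=0 $.

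Next I would show monotonicity in $ R $: if $ R' > R $, then $ u_R $, extended by $ 0 $ outside $ B_R $, is a subsolution of the problem on $ B_{R'} $ (its zero exterior data lies below the nonnegative $ u_{R'} $ on $ B_{R'} \setminus B_R $), so the comparison principle in balls — the delicate ingredient the paper says it has already proved — yields $ u_R \le u_{R'} $ in $ \mathbb{R}^d \times \mathbb{R}^+ $. Thus $ \{u_R\}_R $ is monotone nondecreasing in $ R $ and uniformly bounded by $ \| u_0 \|_\infty $, so the pointwise (and $ L^1_{\mathrm{loc}} $, by dominated convergence) limit $ \underline{u} := \lim_{R\to\infty} u_R $ exists, is measurable, nonnegative and bounded by $ \| u_0 \|_\infty $. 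I would then pass to the limit in the weak formulation: for a test function $ \varphi \in C_c^\infty(\mathbb{R}^d\times[0,T]) $, eventually $ \operatorname{supp}\varphi(\cdot,t) \subset B_R $ for $ R $ large, the $ u_R $ satisfy (an appropriate form of) \eqref{eq7}; the linear terms pass to the limit by dominated convergence, and the term $ \int_0^T\!\int \Phi(u_R)\,(-\Delta)^s\varphi $ converges because $ \Phi $ is continuous, $ u_R \to \underline{u} $ pointwise and boundedly, and $ (-\Delta)^s\varphi \in L^1 $ is a fixed kernel. This shows $ \underline{u} $ is a very weak solution in the sense of Definition \ref{defsol2}. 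Here one must be a little careful that the approximating problems on $ B_R $ are formulated so that their weak formulation, after extending by zero, is compatible with \eqref{eq7}; using the extension picture this is the statement that the extension of $ u_R $ is the solution of a mixed Dirichlet problem on the cylinder $ B_R \times (0,\infty)_y $, which is the natural object to work with.

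Finally, minimality: let $ u\ge 0 $ be any very weak solution of \eqref{eq6}. For each fixed $ R $, I would compare $ u_R $ with $ u $ on $ B_R $. Since $ u \ge 0 = u_R $ on $ \mathbb{R}^d\setminus B_R $, $ u $ is a supersolution of the ball problem with the same initial datum, and the comparison principle in balls gives $ u_R \le u $. Letting $ R\to\infty $ yields $ \underline{u} \le u $, which is the desired minimality. The main obstacle is the comparison step — both the monotonicity $ u_R \le u_{R'} $ and the minimality $ u_R \le u $ rest entirely on the comparison principle in balls, which the excerpt flags as the "delicate" point; the subtlety is that $ u $ is only a \emph{very weak} (distributional) solution on all of $ \mathbb{R}^d $, not a priori an energy solution, so one needs a comparison statement robust enough to compare an energy subsolution on $ B_R $ against a merely bounded distributional supersolution, handling the nonlocal tail contributions of $ (-\Delta)^s $ coming from the region outside $ B_R $ (where $ u_R = 0 \le u $, so these tails have a sign that works in our favour). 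Assuming that comparison principle as already proved in the paper, the rest is the routine monotone-limit argument sketched above.
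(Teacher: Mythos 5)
Your overall strategy --- exhaustion by balls with homogeneous Dirichlet problems for the fractional Laplacian on $B_R$, a monotone limit, and minimality via the comparison principle of Proposition \ref{comppr} --- is the same as the paper's. However, there are two concrete gaps in the execution. First, the passage to the limit in the weak formulation is not the routine dominated-convergence step you describe: the ball solutions satisfy \eqref{eq18f} with the \emph{spectral} operator $(-\Delta)^s_R\,\psi$, and for a fixed test function $\varphi\in C^\infty_c(B_{R_0}\times[0,T])$ one has $(-\Delta)^s_R\,\varphi\neq(-\Delta)^s\varphi$ for every $R>R_0$ --- the two operators differ even on compactly supported smooth functions --- so it is not true that ``the $u_R$ satisfy (an appropriate form of) \eqref{eq7}''. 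You flag this compatibility issue but do not resolve it. The paper sidesteps it with a double approximation: it first truncates the datum to $u_{0k}=u_0\chi_{B_k}\in L^1(\mathbb{R}^d)\cap L^\infty(\mathbb{R}^d)$, so that the limit $u_k=\lim_{R\to\infty}u_{k,R}$ can be identified as a global \emph{energy} solution satisfying \eqref{eq406} by the arguments of \cite[Section 7.2]{Vaz12}; only afterwards does it let $k\to\infty$ in \eqref{eq406}, where the operator appearing in the formulation is already the global one.

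Second, your monotonicity in $R$ is justified by saying that $u_R$ extended by zero ``is a subsolution of the problem on $B_{R'}$'' because its zero exterior data lie below $u_{R'}$. That reasoning is native to the \emph{restricted} fractional Laplacian with exterior data; for the spectral operator actually used here there is no exterior condition, and no direct mechanism by which the region $B_{R'}\setminus B_R$ enters the equation on the smaller ball. The paper proves the monotonicity \eqref{eqqq} via the nontrivial extension identity \eqref{eq404} (the two-ball analogue of Proposition \ref{ineq}), which shows that $u_{R_2}$ is a supersolution of the $B_{R_1}$-problem in the dual sense \eqref{eq402}, and then reruns the duality argument of Proposition \ref{comppr}. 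Your minimality step --- comparing $u_R$ with an arbitrary nonnegative very weak solution $u$ --- is fine, since that is exactly Proposition \ref{comppr}; but the two points above require the machinery of Section \ref{mini} rather than the heuristics you give.
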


We point out that, a posteriori, as a consequence of Theorem \ref{teouni} any nonnegative solution in fact coincides with the minimal one. However, we preferred to state its existence in a separate proposition since it is a key tool to our strategy.


\subsection{The strategy of proof}

Our uniqueness proof, which is carried out in Section \ref{proofs}, is mainly inspired from an argument that crucially relies on the existence of the minimal solution in the case of nonnegative initial data. A related approach was adopted in the local case in \cite{KKT,Pu}. However, since the $s$-fractional Laplacian of nontrivial compactly-supported functions is not compactly supported, in our estimates some further terms to be controlled appear when dealing with cut-off arguments. In order to manage them, we need some refined estimates on the behavior of the fractional Laplacian (and of a related nonlinear nonlocal operator) of cut-off functions, see the first part of the proof of Theorem \ref{teouni} in Subsection \ref{nonneg}. We stress that we will first deal with \it nonnegative \rm solutions in Subsection \ref{nonneg} and then extend the uniqueness result to general bounded solutions in Subsection \ref{sign-change}.

The construction of the minimal solution is based on a comparison principle for distributional solutions in balls, which in the local case is proved by means of the so-called duality method, first introduced in \cite{ACP, Pierre} and then exploited in several frameworks, both local and nonlocal, see e.g.~\cite{GMP,GMPjmpa,GMPjems}. In our setting, comparison occurs between solutions to problems in ${\mathbb R}^d$ and solutions to problems involving the \it spectral fractional Laplacian \rm in Euclidean balls. This is in fact the most delicate point of our paper, to which we devote the entire Section \ref{mini} and part of Section \ref{sec:pre}, where some related preliminary results are discussed (see also Appendix \ref{app}). Once existence of a (minimal) solution for nonnegative initial data is guaranteed, still in Subsection \ref{sign-change} we briefly show  how to use such a result to prove existence for general bounded data.

Since the (spectral) fractional Laplacian operator on different domains acts differently on fixed test functions, in order to prove the comparison principle we first have to show a key integral inequality, which is the content of Proposition \ref{ineq} and strongly relies on the extension operator. Furthermore, we need to consider solutions to suitable backward fractional parabolic problems in balls (see Proposition \ref{dual}), that to our knowledge have not been much studied in the literature. In particular, a key trace property of the conormal derivative of the extension of such solutions is established, the latter having a fundamental role (see Lemma \ref{prop-compact}).


\section{Preliminaries: extension problems} \label{sec:pre}

As mentioned in the Introduction, in view of the seminal paper \cite{CS} we know that the fractional Laplacian operator is strictly connected with a suitable \emph{extension problem} defined on the upper half space, the latter being denoted by
\begin{equation*}
\Omega:=\left\{(x,y) \in \mathbb R^{d+1}: \ x \in \mathbb{R}^d \, , \  y>0\right\} .
\end{equation*}
In addition, for all $R>0$ we define the Euclidean balls
\[
B_R := \left\{ x \in \mathbb R^d : \ |x|<R \right\}
\]
along with the upper ``cylinders''
\begin{equation*}
\mathcal{C}_R:= \left\{ (x,y) \in \Omega : \ |x|<R \right\}
\end{equation*}
and the upper ``half balls''
\begin{equation}\label{eq10}
\Omega_R := \left\{ (x,y) \in \Omega : \ |(x,y)|<R \right\}.
\end{equation}

In the sequel we will mostly, but not only, deal with extensions to the half space of bounded functions.

\begin{defi}
Let  $ s \in (0,1) $ and $ v \in L^\infty(\mathbb{R}^d) $. Then its \emph{$2s$-harmonic extension} $E(v): \Omega\to\mathbb{R}$ is defined as
\begin{equation}\label{eq8}
E(v)(\cdot,y)=P_s(\cdot,y)\ast v \qquad \forall y>0 \, ,
\end{equation}
where the generalized \emph{Poisson kernel} $P_s$ is given by
\begin{equation}\label{eq9}
P_s(x,y) := \kappa_{d,s} \, \frac{y^{2s}}{\left|(x,y)\right|^{d+2s}} \qquad \forall (x,y) \in \Omega \, .
\end{equation}
\end{defi}
It is not difficult to check that $P_s$ is $2s$-harmonic, in the sense that
$$
\dive\!\left( y^{1-2s}\nabla P_s \right) = 0 \, ,
$$
and satisfies
$$
\lim_{y \downarrow 0} P_s(\cdot,y) = \delta_0 \qquad \text{in } \mathcal{D}^\prime(\mathbb{R}^d) \, ,
$$
which at least formally shows the identity $ E(v)(x,0)=v(x) $, so that $ E(v) $ is indeed an extension of $v$ to the half space. In the next lemma we collect some useful properties of the extension of bounded functions, which will be useful below and can easily be deduced from \eqref{eq8} (we omit the simple proof).
\begin{lem}\label{ext-prop}
Let  $ s \in (0,1) $ and $ v \in L^\infty(\mathbb{R}^d) $. Then $E(v)$ is a smooth function in $ \Omega $ such that
\begin{equation*}
\dive\!\left( y^{1-2s}\nabla E(v) \right) = 0 \qquad \text{in } \Omega
\end{equation*}
and
\begin{equation*}
\lim_{y \downarrow 0} E(v)(\cdot,y) = v \qquad \text{in } L^p_{\mathrm{loc}}(\mathbb{R}^d) \, , \ \text{for every } p \in [1,\infty) \, .
\end{equation*}
\end{lem}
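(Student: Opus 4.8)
The plan is to read off all three assertions directly from the convolution representation \eqref{eq8}, writing
\[
E(v)(x,y) = \int_{\mathbb{R}^d} P_s(x-x^\prime,y) \, v(x^\prime) \, \dx^\prime \, , \qquad (x,y)\in\Omega \, ,
\]
which is well defined and bounded (by $\|v\|_{L^\infty(\mathbb{R}^d)}$) since $P_s(\cdot,y)\ge 0$ belongs to $L^1(\mathbb{R}^d)$ with $\int_{\mathbb{R}^d}P_s(x,y)\,\dx=1$, the latter being exactly the normalization enforced by the choice of $\kappa_{d,s}$.

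For smoothness, fix a compact set $K\subset\Omega$ and a neighbourhood of it on which $y$ ranges in some $[a,b]$ with $0<a\le b$ and $|x|\le M$. From the explicit form \eqref{eq9} one checks that, for every multi-index $\beta$ in the $(x,y)$ variables, differentiation does not decrease the decay rate in the first variable, so that $|\partial^\beta_{(x,y)} P_s(x-x^\prime,y)|\le C_{K,\beta}\,(1+|x^\prime|)^{-d-2s}$ for $(x,y)$ in that neighbourhood and all $x^\prime\in\mathbb{R}^d$; since the right-hand side is integrable in $x^\prime$, one may differentiate under the integral sign arbitrarily many times, whence $E(v)\in C^\infty(\Omega)$. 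The same justification yields the equation: for each fixed $x^\prime$ the map $(x,y)\mapsto P_s(x-x^\prime,y)$ is an $x$-translate of $P_s$, and the operator $\dive\!\left(y^{1-2s}\nabla\,\cdot\,\right)$ is translation-invariant in $x$, so $\dive\!\left(y^{1-2s}\nabla_{(x,y)} P_s(x-x^\prime,y)\right)=0$ in $\Omega$; integrating this identity against $v(x^\prime)$ and differentiating under the integral sign gives $\dive\!\left(y^{1-2s}\nabla E(v)\right)=0$ in $\Omega$.

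For the trace, the scaling identity $P_s(x,y)=y^{-d}P_s(x/y,1)$, immediate from \eqref{eq9}, shows that $\{P_s(\cdot,y)\}_{y>0}$ is an approximate identity: $P_s(\cdot,y)=y^{-d}\psi(\cdot/y)$ with $\psi:=P_s(\cdot,1)\ge 0$, $\psi\in L^1(\mathbb{R}^d)$, $\int_{\mathbb{R}^d}\psi=1$. Fix $p\in[1,\infty)$ and $\rho>0$, and split $v=v\,\chi_{B_{2\rho}}+v\,\chi_{\mathbb{R}^d\setminus B_{2\rho}}$. The first summand lies in $L^p(\mathbb{R}^d)$, so by the classical approximate-identity theorem $P_s(\cdot,y)\ast(v\,\chi_{B_{2\rho}})\to v\,\chi_{B_{2\rho}}$ in $L^p(\mathbb{R}^d)$, in particular in $L^p(B_\rho)$, where $v\,\chi_{B_{2\rho}}$ coincides with $v$. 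For the second summand, if $x\in B_\rho$ and $x^\prime\notin B_{2\rho}$ then $|x-x^\prime|>\rho$, so
\[
\left| \left(P_s(\cdot,y)\ast(v\,\chi_{\mathbb{R}^d\setminus B_{2\rho}})\right)(x) \right| \le \|v\|_{L^\infty(\mathbb{R}^d)} \int_{|z|>\rho} P_s(z,y)\,\dz \xrightarrow[\,y\downarrow 0\,]{} 0 \, ,
\]
the limit following once more from the scaling identity, since the mass of $P_s(\cdot,y)$ concentrates at the origin as $y\downarrow0$. Adding the two contributions yields $E(v)(\cdot,y)\to v$ in $L^p(B_\rho)$, and since $\rho>0$ is arbitrary this gives the asserted $L^p_{\mathrm{loc}}(\mathbb{R}^d)$ convergence. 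The only genuinely delicate point is precisely this last localization: it is forced by the fact that $v$ is merely bounded and need not be globally $p$-integrable, so that the standard mollifier argument cannot be applied to $v$ itself on all of $\mathbb{R}^d$.
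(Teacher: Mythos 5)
Your proof is correct and is exactly the ``simple proof'' deduced from \eqref{eq8}--\eqref{eq9} that the paper declares it omits: normalization of $P_s$ via $\kappa_{d,s}$, differentiation under the integral sign using the $(1+|x^\prime|)^{-d-2s}$ decay of the kernel and its derivatives on compact subsets of $\Omega$, translation invariance of $\dive(y^{1-2s}\nabla\,\cdot\,)$ together with the $2s$-harmonicity of $P_s$ already recorded in the paper, and the scaling/approximate-identity argument with the localization $v=v\chi_{B_{2\rho}}+v\chi_{\mathbb{R}^d\setminus B_{2\rho}}$ needed because $v$ is only bounded. No gaps.
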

In particular $E(v)$ solves the problem
\begin{equation}\label{eq13}
\begin{cases}
\dive\!\left( y^{1-2s}\nabla E(v) \right) = 0  & \text{in } \Omega \, , \\
E(v)(\cdot,0) = v & \text{in } \mathbb{R}^d \, ,
\end{cases}
\end{equation}
whence the meaning of ``$2s$-harmonic'' extension, the case $ s=1/2 $ corresponding to the usual notion of harmonicity. It is by now well known, see again \cite{CS}, that upon setting
\begin{equation}\label{eq1}
\frac{\partial E (v)}{\partial y^{2s}}:=\mu_s \, \lim_{y\downarrow 0} y^{1-2s}\frac{\partial E(v)(\cdot,y)}{\partial y}
\end{equation}
with
\begin{equation}\label{eq0}
\mu_s:=\frac{2^{2s-1}\Gamma(s)}{\Gamma(1-s)} ,
\end{equation}
the key identity
\begin{equation}\label{eq2}
-\frac{\partial E(v)}{\partial y^{2s}}=(-\Delta)^s v
\end{equation}
holds, namely the $s$-fractional Laplacian coincides with the so called ``Dirichlet-to-Neumann'' operator of the extension problem \eqref{eq13}. Clearly, in the present framework, definition \eqref{eq1} and identity \eqref{eq2} are purely formal; nevertheless it can be shown that they hold at least in the distributional sense. In fact here we will only use them applied to smooth test functions.

\begin{lem}\label{prop-compact}
Let $s \in (0,1)$ and $ \varphi \in C^\infty_c(B_R) $ for some $R>0$. Then
\begin{equation}\label{u1}
\lim_{y\downarrow 0} E(\varphi)(x,y) = \varphi(x) \qquad \forall x \in \mathbb{R}^d
\end{equation}
and
\begin{equation}\label{u2}
-\mu_s \, \lim_{y\downarrow 0} y^{1-2s}\frac{\partial E(\varphi)(x,y)}{\partial y} = (-\Delta)^s \varphi (x) \qquad \forall x \in \mathbb{R}^d \, ,
\end{equation}
both limits being uniform in $x$. Moreover, the following estimates hold:
\begin{equation}\label{eq52}
\left| E(\varphi)(x,y) \right| \leq C \frac{y^{2s}}{|(x,y)|^{d+2s}}\qquad  \forall (x,y) \in \Omega \setminus \Omega_{R} \, ,
\end{equation}
\begin{equation}\label{eq53}
\left| \nabla E(\varphi)(x,y) \right| \leq C\frac{y^{2s-1}}{|(x,y)|^{d+2s}} \qquad \forall (x,y) \in \Omega \setminus \Omega_{R} \, ,
\end{equation}
for a suitable constant $C>0$ depending only on $ d,s,\varphi,R $.
\end{lem}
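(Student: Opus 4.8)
The plan is to prove Lemma \ref{prop-compact} by exploiting the explicit convolution representation \eqref{eq8}--\eqref{eq9} together with the standard Caffarelli--Silvestre theory, the only new point being that $\varphi$ is compactly supported so that decay estimates at infinity become available. First I would establish the pointwise and uniform convergence statements \eqref{u1} and \eqref{u2}. Since $\varphi \in C^\infty_c(B_R) \subset \mathcal{S}(\mathbb{R}^d)$, the classical results of \cite{CS,ST} already give that $E(\varphi)$ solves \eqref{eq13}, that $E(\varphi)(\cdot,y) \to \varphi$ and $-\mu_s\, y^{1-2s}\partial_y E(\varphi)(\cdot,y) \to (-\Delta)^s\varphi$ as $y \downarrow 0$ in, say, $L^2$ or uniformly on compacts; the improvement to \emph{uniform} convergence on all of $\mathbb{R}^d$ follows because both $\varphi$ and $(-\Delta)^s\varphi$ are Schwartz functions, hence bounded and decaying, and because the Poisson kernel $P_s(\cdot,y)$ is an approximate identity with uniform tail control. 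Concretely one writes $E(\varphi)(x,y)-\varphi(x) = \int_{\mathbb{R}^d} P_s(z,y)\,[\varphi(x-z)-\varphi(x)]\,\dz$, splits the integral into $|z|<\delta$ and $|z|\ge\delta$, uses uniform continuity of $\varphi$ on the first piece and $\| P_s(\cdot,y)\|_{L^1}=1$ with $\int_{|z|\ge\delta}P_s(z,y)\,\dz \to 0$ on the second; an analogous argument handles \eqref{u2} after recalling that $-\mu_s\,y^{1-2s}\partial_y E(\varphi)(\cdot,y)$ is itself a convolution of $(-\Delta)^s\varphi$ against an approximate identity (equivalently, $E((-\Delta)^s\varphi)(\cdot,y)$), which is the content of \eqref{eq1}--\eqref{eq2}.

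Next I would prove the decay estimates \eqref{eq52} and \eqref{eq53}. For $(x,y) \in \Omega \setminus \Omega_R$ we have $|(x,y)| \ge R$, while $\operatorname{supp}\varphi \subset B_R$, so in the convolution $E(\varphi)(x,y)=\int_{B_R} P_s(x-x',y)\,\varphi(x')\,\dx'$ the variable $x'$ ranges over a region at distance comparable to $|(x,y)|$ from the point $(x,y)$. More precisely, for $|x'|<R$ one has $|(x-x',y)| \ge c\,|(x,y)|$ for a constant $c=c(R)>0$: indeed if $|x| \ge 2R$ then $|x-x'| \ge |x|-R \ge |x|/2$, and if $|x|<2R$ then $|(x,y)|<\sqrt{4R^2+y^2}$ forces $y$ to carry the bulk of the norm, so $|(x-x',y)| \ge y \ge |(x,y)|/\sqrt{4R^2+1}\cdot(\dots)$ — in all cases $|(x-x',y)|^{-d-2s} \le C(R,d,s)\,|(x,y)|^{-d-2s}$. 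Plugging this into \eqref{eq9} and bounding $\int_{B_R}|\varphi|\,\dx' \le C$ yields \eqref{eq52} with the stated dependence on $d,s,\varphi,R$. For \eqref{eq53} one differentiates under the integral sign: $\nabla_{(x,y)} P_s(x-x',y)$ picks up an extra factor of order $|(x-x',y)|^{-1}$ relative to $P_s$ itself (a direct computation from $P_s(x,y)=\kappa_{d,s}\,y^{2s}|(x,y)|^{-d-2s}$ shows $|\nabla P_s(x,y)| \le C\,y^{2s-1}|(x,y)|^{-d-2s} + C\,y^{2s}|(x,y)|^{-d-2s-1} \le C\,y^{2s-1}|(x,y)|^{-d-2s}$ using $y \le |(x,y)|$), and the same distance comparison as before gives \eqref{eq53}.

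The main obstacle — though a mild one — is making the distance comparison $|(x-x',y)| \ge c(R)\,|(x,y)|$ fully rigorous in the borderline regime where $|x|$ is of order $R$ and $y$ is small but $|(x,y)| \ge R$; there one must be slightly careful that $|(x,y)|$ is bounded (by roughly $2R$), so no decay in $|(x,y)|$ is actually being claimed in that region and a crude bound $|(x-x',y)|^{-d-2s} \le y^{-d-2s}$ combined with $|(x,y)|^{-d-2s} \ge (2R)^{-d-2s}$ suffices to absorb everything into the constant $C$. Away from that region ($|x| \ge 2R$ or $y$ large) the comparison is immediate. A secondary technical point is justifying differentiation under the integral sign in \eqref{eq53}, which is standard since for $(x,y)$ in a fixed compact subset of $\Omega$ the kernel and its gradient are bounded and $\varphi \in L^1$. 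Once these estimates are in place, the lemma is complete; I would organize the proof as: (i) recall $E(\varphi)$ solves the extension problem and deduce \eqref{u1}, \eqref{u2} with uniformity from Schwartz decay and approximate-identity bounds; (ii) the elementary geometric estimate on $|(x-x',y)|$; (iii) substitute to obtain \eqref{eq52}; (iv) differentiate and repeat to obtain \eqref{eq53}.
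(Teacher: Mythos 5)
Your overall route is the same as the paper's: for \eqref{u1}--\eqref{u2} the paper simply defers to \cite[Section 3.1]{CS}, and it declares \eqref{eq52}--\eqref{eq53} to ``follow easily'' from the convolution formula \eqref{eq8} and the kernel \eqref{eq9}; your approximate-identity argument and your distance comparison are exactly the intended fleshing-out. (One cosmetic slip: $-\mu_s\,y^{1-2s}\partial_y E(\varphi)(\cdot,y)$ is a convolution of $(-\Delta)^s\varphi$ against an approximate identity, but it is the $2(1-s)$-harmonic extension of $(-\Delta)^s\varphi$, not $E((-\Delta)^s\varphi)$ itself; this does not affect your argument, which only uses the approximate-identity structure.)

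There is, however, one step that fails as written: your treatment of the borderline regime where $|x|$ is of order $R$ and $y$ is small. The bound $|(x-x',y)|^{-d-2s}\le y^{-d-2s}$ gives $P_s(x-x',y)\le \kappa_{d,s}\,y^{2s}\cdot y^{-d-2s}=\kappa_{d,s}\,y^{-d}$, which blows up as $y\downarrow 0$ and in particular destroys the factor $y^{2s}$ that \eqref{eq52} requires you to retain. The correct crude bound in that regime uses the \emph{horizontal} distance rather than the vertical one, and this is precisely where the hypothesis $\varphi\in C^\infty_c(B_R)$ (support a \emph{compact} subset of the open ball, say $\operatorname{supp}\varphi\subset B_\rho$ with $\rho<R$) is needed: if $y\le\varepsilon_0$ with $\varepsilon_0^2<R^2-\rho'^2$ for some $\rho<\rho'<R$, then $|(x,y)|\ge R$ forces $|x|\ge\sqrt{R^2-y^2}>\rho'$, hence $|(x-x',y)|\ge |x-x'|\ge\rho'-\rho=:\delta_0>0$ uniformly, so $P_s(x-x',y)\le \kappa_{d,s}\,y^{2s}\delta_0^{-d-2s}\le C\,y^{2s}|(x,y)|^{-d-2s}$ because $|(x,y)|$ is bounded above there. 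Note also that if the support of $\varphi$ were allowed to touch $\partial B_R$ the comparison $|(x-x',y)|\ge c\,|(x,y)|$ would genuinely fail (take $x$ just outside and $x'$ just inside $\partial B_R$ with $y\to 0$), so this is not a purely formal point. With that single correction, and with the remaining regions handled exactly as you describe ($|x|\ge 2R$ via $|x-x'|\ge|x|/2$; $y$ bounded below via $|(x-x',y)|\ge y\ge c\,|(x,y)|$), both \eqref{eq52} and \eqref{eq53} follow, the latter from the pointwise kernel bound $|\nabla P_s(x,y)|\le C\,y^{2s-1}|(x,y)|^{-d-2s}$ that you correctly derive using $y\le|(x,y)|$.
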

As concerns \eqref{u1}--\eqref{u2}, we refer to \cite[Section 3.1]{CS}. On the other hand, formulas \eqref{eq52} and \eqref{eq53} follow easily from \eqref{eq8} and \eqref{eq9}.

There are several possible definitions of $s$-fractional Laplacian in a Euclidean ball (or more generally in a bounded regular domain): see \cite{BFV} and references therein. To our purposes it is more convenient to use the \emph{spectral} one, which is again crucially related to a suitable localised extension operator that we will describe below. Indeed, given $R>0$ and any $ \varphi \in C^\infty_c(B_R) $, by definition the spectral $s$-fractional Laplacian of $ \varphi $ is the $s$-th power of the operator $ (-\Delta)$, that is
\begin{equation}\label{eq305}
(-\Delta)_R^s \, \varphi(x) := \sum_{k=1}^\infty \lambda_k^s \, \hat \varphi_k \, \phi_k(x) \qquad \forall x \in B_R \, ,
\end{equation}
where $ \{ \phi_k \}_{k \in \mathbb{N}} $ is a sequence of eigenfunctions of $(-\Delta)$ completed with homogeneous Dirichlet boundary conditions on $ \partial B_R $ associated to a corresponding nondecreasing sequence of positive eigenvalues $ \{ \lambda_k \}_{k \in \mathbb{N}} $, which form an orthonormal basis for $ L^2(B_R) $, and $ \{ \hat \varphi_k \}_{k \in \mathbb{N}} $ is the respective sequence of Fourier coefficients of $ \varphi $. By virtue of \eqref{eq305} it comes natural to introduce the space $ H^s_0(B_R) $, namely the one formed by functions $ \varphi \in L^2(B_R) $ such that
\begin{equation}\label{Hs}
\left\| \varphi \right\|_{H^s_0(B_R)}^2:= \left\| (-\Delta)^{s/2}_R \varphi \right\|_{L^2(B_R)}^2  = \sum_{k=1}^\infty \lambda_k^s \, \hat \varphi_k^2 < \infty \, ,
\end{equation}
i.e.~those functions in $ L^2(B_R) $ whose (distributional) $s/2$-fractional Laplacian is also in $L^2(B_R)$. Similarly, we define the \emph{domain} of the $s$-fractional Laplacian in $B_R$, which we denote by $ \mathrm{Dom}(-\Delta)^s_R $, as the space of functions in $ L^2(B_R) $ such that $ (-\Delta)^s_R \, \varphi \in L^2 (B_R) $, which amounts to requiring
\begin{equation}\label{H2s}
\left\| \varphi \right\|_{\mathrm{Dom}(-\Delta)_R^s}^2 := \left\| (-\Delta)^{s}_R \, \varphi \right\|_{L^2(B_R)}^2 = \sum_{k=1}^\infty \lambda_k^{2s} \, \hat \varphi_k^2 < \infty \, .
\end{equation}
In particular, it is straightforward to check that if $ \varphi_1 \in H^s_0(B_R) $ and $ \varphi_2 \in \mathrm{Dom}(-\Delta)^s_R  $ then
\begin{equation}\label{eq401}
\int_{B_R}(-\Delta)^{s/2}_R \varphi_1 \, (-\Delta)^{s/2}_R \varphi_2 \, \dx =  \int_{B_R} \varphi_1 \, (-\Delta)^{s}_R \, \varphi_2 \, \dx \, .
\end{equation}
The ``spectral'' extension $E_R(\varphi)$ of $ \varphi $ is formally defined as the solution to the following problem:
\begin{equation}\label{eq14}
\begin{cases}
\dive\!\left( y^{1-2s} \nabla E_R(\varphi) \right) =0 &\text{in } \mathcal C_R \, , \\
E_R(\varphi) = 0 & \text{on } \partial\mathcal C_R\cap \{y>0\} \, , \\
E_R(\varphi) = \varphi & \text{on } B_R \times \{ 0 \}  \, .
\end{cases}
\end{equation}
It can be proved that there exists a unique solution to \eqref{eq14}, at least for any $ \varphi \in H^s_0(B_R) $, belonging to the space $ X^s_0(\mathcal{C}_R) $, the latter being the closure of $ C^\infty_c(\mathcal{C}_R \cup \left( B_R \times \{ y=0 \} \right) ) $ with respect to the norm
$$
\left\| \nabla f \right\|_{L^2\left(\mathcal{C}_R;y^{1-2s}\dx\dy\right)}^2 = \int_{\mathcal{C}_R} \left| \nabla f(x,y) \right|^2 y^{1-2s} \dx\dy \qquad \forall f \in C^\infty_c(\mathcal{C}_R \cup \left( B_R \times \{ y=0 \} \right) ) \, .
$$
By elliptic regularity $ E_R (\varphi) $ is a smooth function on $ \overline{\mathcal{C}}_R \cap \{ y>0 \} $ (for any $ \varphi \in L^2(B_R) $ actually). See e.g.~\cite{CSi}, and references therein.

In the sequel, for our strategy to work it is crucial to be able to apply the analogues of \eqref{u1}--\eqref{u2} (in balls) when $ \varphi $ and $ (-\Delta)^s_R \, \varphi $ merely belong to $ L^2(B_R) $. They are ensured by the following technical lemma, whose proof is deferred to Appendix \ref{Appnochetto}.

\begin{lem}\label{lemma-nochetto}
Let $ s \in (0,1) $, $R>0$, $ \varphi \in L^2(B_R) $ and $ E_R(\varphi) $ be the solution to \eqref{eq14}, defined by \eqref{eq300}. Then
\begin{equation}\label{limit-0}
\lim_{y\downarrow0} E_R(\varphi)(\cdot,y) = \varphi \qquad \text{in } L^2(B_R)
\end{equation}
and the estimates
\begin{equation}\label{stima-L2}
\left\| E_R(\varphi) (\cdot,y) \right\|_{L^2(B_R)} \le C_\theta \, e^{-\theta\sqrt{\lambda_1}y}  \left\| \varphi \right\|_{L^2(B_R)} \qquad \forall y > 0 \, , \quad \forall \theta \in (0,1) \,,
\end{equation}
\begin{equation}\label{stima-Hs}
\left\| \nabla E_R(\varphi) (\cdot,y) \right\|_{L^2(B_R)} \le C_\theta \, e^{-\theta\sqrt{\lambda_1}y} \left\| \varphi \right\|_{L^2(B_R)} \qquad \forall y \ge 1 \, , \quad \forall \theta \in (0,1) \, ,
\end{equation}
hold, where $ C_\theta>0 $ depends only on $ \theta,\lambda_1, s$. If in addition $ \varphi \in H^s_0(B_R) $, then
\begin{equation}\label{stima-integrale}
\sqrt{\mu_s} \left\| \nabla E_R(\varphi) \right\|_{L^2(\mathcal{C}_R;y^{1-2s}\dx\dy)} = \left\| \varphi \right\|_{H^s_0(B_R)} .
\end{equation}
Furthermore, if $ \varphi \in \mathrm{Dom}(-\Delta)^s_R $ there holds
\begin{equation}\label{limit}
- \mu_s\lim_{y\downarrow0} y^{1-2s}\frac{\partial E_R(\varphi)(\cdot,y)}{\partial y} = (-\Delta)_R^s\,\varphi \qquad \text{in } L^2(B_R) \, .
\end{equation}
\end{lem}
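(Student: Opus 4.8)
The plan is to reduce everything to the spectral representation of $E_R(\varphi)$ and to one-dimensional estimates on the associated profile. Recall from \eqref{eq300} that, writing $\varphi=\sum_{k\ge1}\hat\varphi_k\,\phi_k$ in $L^2(B_R)$, the solution of \eqref{eq14} is
\[
E_R(\varphi)(x,y)=\sum_{k=1}^\infty\hat\varphi_k\,\phi_k(x)\,\theta_s\!\left(\sqrt{\lambda_k}\,y\right),
\]
where $\theta_s\colon[0,\infty)\to(0,1]$ is the Caffarelli--Silvestre profile, i.e.\ the unique bounded solution of $\big(z^{1-2s}\theta_s'\big)'=z^{1-2s}\theta_s$ on $(0,\infty)$ with $\theta_s(0)=1$ (explicitly $\theta_s(z)=\frac{2^{1-s}}{\Gamma(s)}\,z^s K_s(z)$, with $K_s$ the modified Bessel function of the second kind). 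From \cite{CS,CSi} and classical Bessel asymptotics one has: $\theta_s$ is smooth on $[0,\infty)$ with $0\le\theta_s\le1$; $\theta_s(z)$ and $|\theta_s'(z)|$ are $O(z^{s-1/2}e^{-z})$ as $z\to\infty$, so that for every $\theta\in(0,1)$ there is $C_\theta>0$ with $\theta_s(z)\le C_\theta e^{-\theta z}$ on $[0,\infty)$ and $z\big(\theta_s(z)+|\theta_s'(z)|\big)\le C_\theta e^{-\theta z}$ on $[\sqrt{\lambda_1},\infty)$; and the Neumann limit $\lim_{z\downarrow0}z^{1-2s}\theta_s'(z)=-1/\mu_s$ holds, with $\mu_s$ as in \eqref{eq0}. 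Since $E_R(\varphi)$ is smooth for $y>0$ by elliptic regularity, the series may be differentiated termwise there; using that $\{\phi_k\}$ is orthonormal in $L^2(B_R)$ and $\int_{B_R}\nabla\phi_j\cdot\nabla\phi_k\,\dx=\lambda_j\delta_{jk}$, every slicewise norm becomes an explicit series in the $\hat\varphi_k$.

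With this in hand, \eqref{limit-0}--\eqref{stima-Hs} follow by elementary Fourier-side arguments. By Parseval,
\[
\left\|E_R(\varphi)(\cdot,y)-\varphi\right\|_{L^2(B_R)}^2=\sum_{k=1}^\infty\hat\varphi_k^2\big(\theta_s(\sqrt{\lambda_k}\,y)-1\big)^2\xrightarrow[y\downarrow0]{}0
\]
by dominated convergence for series, each summand tending to $0$ and being bounded by $\hat\varphi_k^2$; this is \eqref{limit-0}. The same identity with $\theta_s(\sqrt{\lambda_k}y)^2$ in place of $\big(\theta_s(\sqrt{\lambda_k}y)-1\big)^2$, together with $\theta_s(\sqrt{\lambda_k}y)\le C_\theta e^{-\theta\sqrt{\lambda_k}y}\le C_\theta e^{-\theta\sqrt{\lambda_1}y}$, yields \eqref{stima-L2}. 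For \eqref{stima-Hs}, termwise differentiation gives
\[
\left\|\nabla E_R(\varphi)(\cdot,y)\right\|_{L^2(B_R)}^2=\sum_{k=1}^\infty\hat\varphi_k^2\,\lambda_k\left(\theta_s(\sqrt{\lambda_k}\,y)^2+\theta_s'(\sqrt{\lambda_k}\,y)^2\right),
\]
and writing $\lambda_k=(\sqrt{\lambda_k}\,y)^2/y^2$ and using that for $y\ge1$ one has $\sqrt{\lambda_k}\,y\ge\sqrt{\lambda_1}$, together with $z^2\big(\theta_s(z)^2+\theta_s'(z)^2\big)\le C_\theta^2 e^{-2\theta z}$ on $[\sqrt{\lambda_1},\infty)$, we obtain $\lambda_k\big(\theta_s(\sqrt{\lambda_k}y)^2+\theta_s'(\sqrt{\lambda_k}y)^2\big)\le C_\theta^2 e^{-2\theta\sqrt{\lambda_1}y}$ for all $k$ and $y\ge1$; summing over $k$ gives \eqref{stima-Hs}.

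For the energy identity \eqref{stima-integrale}, let $\varphi\in H^s_0(B_R)$, so $\sum_k\lambda_k^s\hat\varphi_k^2<\infty$. Integrating the slicewise identity against $y^{1-2s}\,\dy$ and applying Tonelli (all summands being nonnegative),
\[
\mu_s\left\|\nabla E_R(\varphi)\right\|_{L^2(\mathcal{C}_R;y^{1-2s}\dx\dy)}^2=\mu_s\sum_{k=1}^\infty\hat\varphi_k^2\,\lambda_k\int_0^\infty y^{1-2s}\left(\theta_s(\sqrt{\lambda_k}\,y)^2+\theta_s'(\sqrt{\lambda_k}\,y)^2\right)\dy.
\]
The substitution $z=\sqrt{\lambda_k}\,y$ turns the inner integral into $\lambda_k^{s-1}\int_0^\infty z^{1-2s}\big(\theta_s^2+(\theta_s')^2\big)\dz$, and integrating by parts with the ODE $\big(z^{1-2s}\theta_s'\big)'=z^{1-2s}\theta_s$ gives
\[
\int_0^\infty z^{1-2s}\left(\theta_s(z)^2+\theta_s'(z)^2\right)\dz=\Big[\theta_s(z)\,z^{1-2s}\theta_s'(z)\Big]_0^\infty=\frac{1}{\mu_s},
\]
the boundary term vanishing at $+\infty$ (exponential decay) and equalling $-\theta_s(0)\lim_{z\downarrow0}z^{1-2s}\theta_s'(z)=1/\mu_s$ at $0$ (note $\int_0^\infty z^{1-2s}(\theta_s')^2\,\dz<\infty$ since $\theta_s'(z)=O(z^{2s-1})$ near $z=0$). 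Hence $\mu_s\|\nabla E_R(\varphi)\|^2=\sum_k\lambda_k^s\hat\varphi_k^2=\|\varphi\|_{H^s_0(B_R)}^2$, which is \eqref{stima-integrale}. Finally, for \eqref{limit}, let $\varphi\in\mathrm{Dom}(-\Delta)^s_R$, so $\sum_k\lambda_k^{2s}\hat\varphi_k^2<\infty$; since $\sqrt{\lambda_k}\,y^{1-2s}\theta_s'(\sqrt{\lambda_k}y)=\lambda_k^{s}\,(\sqrt{\lambda_k}y)^{1-2s}\theta_s'(\sqrt{\lambda_k}y)$, setting $g_s(z):=-\mu_s\,z^{1-2s}\theta_s'(z)$ (bounded on $[0,\infty)$, with $g_s(0^+)=1$ and $g_s(z)\to0$ as $z\to\infty$) one gets $-\mu_s y^{1-2s}\partial_y E_R(\varphi)=\sum_k\hat\varphi_k\phi_k\,\lambda_k^s\,g_s(\sqrt{\lambda_k}y)$, whence by Parseval
\[
\left\|-\mu_s y^{1-2s}\partial_y E_R(\varphi)(\cdot,y)-(-\Delta)^s_R\varphi\right\|_{L^2(B_R)}^2=\sum_{k=1}^\infty\hat\varphi_k^2\,\lambda_k^{2s}\big(g_s(\sqrt{\lambda_k}y)-1\big)^2\xrightarrow[y\downarrow0]{}0
\]
again by dominated convergence, each summand going to $0$ and being dominated by $(\sup|g_s|+1)^2\,\hat\varphi_k^2\lambda_k^{2s}$. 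This proves \eqref{limit}.

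The main obstacle is the one-dimensional analysis of the profile $\theta_s$: establishing its exponential decay together with that of $\theta_s'$, pinning down the exact constant in $z^{1-2s}\theta_s'(z)\to-1/\mu_s$, and—on the functional-analytic side—justifying the termwise differentiation of the spectral series for merely $L^2$ data (where $\theta_s'$ blows up like $z^{2s-1}$ as $z\downarrow0$) and the Tonelli/integration-by-parts steps. Once these ingredients are available, the remainder is routine bookkeeping with Fourier coefficients and dominated convergence.
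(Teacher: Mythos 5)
Your proposal is correct and follows essentially the same route as the paper: the spectral representation \eqref{eq300} with the Bessel profile $c_s z^sK_s(z)$, Parseval on each slice, the exponential decay of $z^sK_s(z)$ and of its derivative $-z^sK_{1-s}(z)$ (the paper's \eqref{eq302}--\eqref{eq304}), and dominated convergence in $k$ for the two limits (the paper phrases this as an explicit $N$-splitting). The only substantive difference is that you give a self-contained derivation of \eqref{stima-integrale} via the ODE $(z^{1-2s}\theta_s')'=z^{1-2s}\theta_s$ and an integration by parts, where the paper simply cites the literature and sketches the same computation.
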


\section{Comparison principles in balls}\label{mini}

In order to construct a minimal solution to \eqref{eq6} when $ u_0 \ge 0 $, namely to prove Proposition \ref{thmesi}, first of all it is essential to be able to suitably approximate the latter by analogous problems posed in balls, with \emph{homogeneous} Dirichlet boundary conditions. In the sequel, we will make the additional assumption $\Phi(0)=0$, since it is important for the following strategy that $ u \ge 0 $ implies $ \Phi(u) \ge 0 $. We will then explain in the proofs of our main results how this extra requirement can be dropped. 

\begin{defi}\label{D1f}
Let $ s \in (0,1) $, $R \ge 1 $, $ u_0 \in L^\infty(B_R) $ with $ u_0 \ge 0 $ and $ \Phi $ satisfy \eqref{hp-phi}  with $\Phi(0)=0$. We say that a measurable function $u_R$ is a solution to problem
\begin{equation}\label{eq17f}
\begin{cases}
(u_R)_t = -(-\Delta)^s_R \, \Phi(u_R) & \text{in } B_R \times \mathbb{R}^+ \, , \\
\Phi(u_R)= 0  & \text{on } \partial B_R\times \mathbb{R}^+ \, , \\
u_R=u_0 & \text{on }  B_R \times \{0\} \, ,
\end{cases}
\end{equation}
if $ u_R \in L^\infty(B_R \times \mathbb{R}^+)$, $u_R\ge0$ and for a.e.~$T>0$ there holds
\begin{equation}\label{eq18f}
\begin{aligned}
\int_0^T \int_{B_R} u_R \, \psi_t \, \dx \dt = & \, \int_0^T\int_{B_R} \Phi(u_R) \, (-\Delta)^s_R\,\psi \, \dx \dt \\
& \, + \int_{B_R} u_R(x,T) \, \psi(x,T)\,\dx - \int_{B_R} u_0(x) \, \psi(x,0)\,\dx
\end{aligned}
\end{equation}
for every $\psi \in C^1\!\left([0, T]; L^2(B_R)\right)\cap L^2\!\left((0, T);\operatorname{Dom}(-\Delta)_R^s\right)$.
\end{defi}

%

We comment that problem \eqref{eq17f}, on bounded domains, was studied previously in \cite[Definition 3.1]{BFV}, by using a different concept of solution.

\smallskip

The main ``local comparison'' result we aim at establishing in this section, which will be crucial to the proof of Proposition \ref{thmesi}, is the following.
\begin{pro}\label{comppr}
Let $ s \in (0,1) $, $R \ge 1 $, $ u_0 \in L^\infty(B_R) $ with $ u_0 \ge 0 $ and $ \Phi $ satisfy \eqref{hp-phi}  with $\Phi(0)=0$. Let $u$ be a \emph{nonnegative} very weak solution to \eqref{eq6} in the sense of Definition \ref{defsol2} and $u_R$ be a solution to \eqref{eq17f} in the sense of Definition \ref{D1f}. Then $ u \ge u_R $ a.e.~in $ B_R \times \mathbb{R}^+ $.
\end{pro}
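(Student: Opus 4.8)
The plan is to compare $u$ and $u_R$ by a duality argument adapted to the nonlocal/extension setting. First I would fix $T>0$ such that both \eqref{eq7} and \eqref{eq18f} hold, and choose a nonnegative test function $\psi$ solving a suitable \emph{backward} parabolic problem in $\mathcal{C}_R$ (or $B_R$) with the spectral fractional Laplacian, of the type $-\psi_t = a(x,t)(-\Delta)^s_R\psi$ with terminal data $\psi(\cdot,T)=0$ and a carefully chosen coefficient $a$ that encodes the "divided difference" $\frac{\Phi(u)-\Phi(u_R)}{u-u_R}$ (truncated and regularized so it lies between $0$ and a constant, using that $\Phi$ is nondecreasing and locally Lipschitz, and bounded because $u,u_R$ are bounded); such solutions are provided by Proposition \ref{dual}, and their key trace/conormal-derivative properties by Lemma \ref{prop-compact} and Lemma \ref{lemma-nochetto}. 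The idea is that plugging $\psi$ into the weak formulation for $u_R$ and subtracting produces $\int_{B_R}(u_0-u_0)\psi(\cdot,0)=0$ on one side, while on the other side the bulk terms telescope, leaving a sign-definite remainder that forces $u\ge u_R$.

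The technical heart is that $u$ solves an equation on all of $\mathbb{R}^d$ with the \emph{full} fractional Laplacian $(-\Delta)^s$, whereas $u_R$ lives in $B_R$ with the \emph{spectral} operator $(-\Delta)^s_R$, and these act differently on the same test function. So I would first extend $\psi$ (defined on $B_R\times[0,T]$) by its spectral extension $E_R(\psi)$ to the half-cylinder $\mathcal{C}_R$, and compare this with the full extension $E(\cdot)$ on $\Omega$. The step that makes this reconciliation possible is the integral inequality of Proposition \ref{ineq}, which (via the extension operator, integration by parts in $\mathcal{C}_R$, and the sign of $\psi$) bounds the discrepancy between $\int \Phi(u)(-\Delta)^s\psi$ and $\int \Phi(u)(-\Delta)^s_R\psi$ by a boundary term on $\partial\mathcal{C}_R\cap\{y>0\}$ that has a favorable sign when $\psi\ge0$ and $E_R(\psi)=0$ on the lateral boundary while $E(\Phi(u))\ge0$ there (this is exactly where $\Phi(0)=0$ and $u\ge0$ enter, guaranteeing $\Phi(u)\ge0$ hence $E(\Phi(u))\ge0$). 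Concretely, using $\dive(y^{1-2s}\nabla E_R(\psi))=0$ in $\mathcal{C}_R$ together with $\dive(y^{1-2s}\nabla E(\Phi(u)))=0$ in $\Omega$, Green's formula over $\mathcal{C}_R$ yields $\int_{B_R}\Phi(u)(-\Delta)^s_R\psi\,\dx$ equal to $\int_{B_R}\Phi(u)(-\Delta)^s\psi\,\dx$ plus a lateral flux term $\propto\int_{\partial\mathcal{C}_R\cap\{y>0\}} y^{1-2s}E(\Phi(u))\,\partial_\nu E_R(\psi)$; since $E_R(\psi)$ vanishes on that boundary and is nonnegative inside, its outward conormal derivative is $\le0$ there, and $E(\Phi(u))\ge0$, so the extra term has the right sign.

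Next I would assemble the estimate: using $\psi$ as test function in \eqref{eq7} restricted to $B_R$ requires care because $\psi$ is supported in $B_R$ but $(-\Delta)^s\psi$ is not compactly supported; here the decay estimates \eqref{eq52}--\eqref{eq53} of Lemma \ref{prop-compact} control the tail contributions and let me justify the manipulations by a cut-off/approximation argument, passing to the limit via dominated convergence. Subtracting the identity for $u_R$ from that for $u$, and using that the coefficient $a$ was built so that $(u-u_R)\psi_t + a(u-u_R)(-\Delta)^s_R\psi = (u-u_R)\big[\psi_t + a(-\Delta)^s_R\psi\big] + (\Phi(u)-\Phi(u_R) - a(u-u_R))(-\Delta)^s_R\psi$, the first bracket vanishes by the choice of $\psi$ (up to the regularization error, which is then sent to zero), the second term is small by construction, and what survives is $-\int_{B_R}(u-u_R)(x,T)\psi(x,T)\,\dx = 0$ on the terminal side and a nonnegative quantity on the other, yielding $\int_0^T\int_{B_R}(u-u_R)\,g\,\dx\dt\ge0$ for an arbitrary nonnegative $g$ (the source driving the backward problem), hence $u\ge u_R$ a.e. The main obstacle I anticipate is precisely this interplay of two different operators: making Proposition \ref{ineq} rigorous with merely-$L^2$ regularity of $(-\Delta)^s_R\psi$ — which is why Lemma \ref{lemma-nochetto} is needed to give \eqref{limit-0} and \eqref{limit} and the exponential decay \eqref{stima-L2}--\eqref{stima-Hs} in that low-regularity class — and simultaneously controlling the non-compactly-supported tails of $(-\Delta)^s\psi$ on $\mathbb{R}^d\setminus B_R$ via \eqref{eq52}--\eqref{eq53}; keeping the sign of the boundary flux term intact through all the approximations is the delicate accounting that must be done carefully.
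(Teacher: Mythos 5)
Your overall strategy is the paper's: first establish (via the extension $E_R$, integration by parts in $\mathcal C_R$, and the sign of the lateral flux term, which is exactly where $\Phi(0)=0$ and $u\ge 0$ enter) that $u$ is a supersolution of the $B_R$-problem in the sense of Proposition \ref{ineq}, then subtract the identity for $u_R$ and run the duality method with a regularized backward problem for the divided-difference coefficient $a$. The variant you choose (zero terminal datum plus a source $g$, concluding $\int_0^T\int_{B_R}(u-u_R)\,g\,\dx\dt\ge 0$) is a legitimate alternative to the paper's (terminal datum $\chi$ with $0\le\chi\le1$, no source, concluding $\int_{B_R}[u(x,T)-u_R(x,T)]\chi(x)\,\dx\ge 0$), although Proposition \ref{dual} as stated only provides solutions of the homogeneous backward problem, so your version would require extending it to inhomogeneous data.

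The one place where the sketch glosses over the step that actually makes the duality method work is the claim that the regularization error is "then sent to zero" / "small by construction". After replacing $a$ by an approximation $a_{n,k}$ in the backward equation, the surviving term is $\int_0^T\int_{B_R}(u-u_R)(a_{n,k}-a)\,(-\Delta)^s_R\,\psi_{n,k}\,\dx\dt$, and $(-\Delta)^s_R\,\psi_{n,k}$ is \emph{not} uniformly bounded in $L^2$: the only uniform control is the degenerate energy estimate $\int_0^T\int_{B_R}a_{n,k}\left[(-\Delta)^s_R\,\psi_{n,k}\right]^2\dx\dt\le C_R^2$ supplied by Proposition \ref{dual}. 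To close the argument one must (i) construct the approximation so that $a_k\ge 1/k>0$ (so the backward problem is non-degenerate and solvable) and, crucially, so that $(a-a_k)/\sqrt{a_k}\to 0$ in $L^2(B_R\times(0,T))$, and then (ii) estimate the error by Cauchy--Schwarz after writing $|a-a_{n,k}|=\frac{|a-a_{n,k}|}{\sqrt{a_{n,k}}}\sqrt{a_{n,k}}$. Without this weighted normalization the error need not vanish (convergence $a_k\to a$ in $L^2$ or even uniformly from above is not enough where $a$ degenerates). Note also that the "second term" in your decomposition, $\left(\Phi(u)-\Phi(u_R)-a(u-u_R)\right)(-\Delta)^s_R\,\psi$, is identically zero by the very definition of $a$, so the whole error sits in the first bracket; your accounting should reflect that.
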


In order to prove Proposition \ref{comppr} we first need to show some technical facts concerning the ``extended'' versions of problems \eqref{eq6} and \eqref{eq17f}. This will be the content of the next subsection. Then the proof of Proposition \ref{comppr} itself will be carried out in Subsection \ref{comppr}.

\subsection{Auxiliary results}

We start by establishing an important consequence of \emph{local energy estimates}, the latter being true for a suitable class of functions.

\begin{lem}\label{lem6}
Let $ R \ge 1 $ and $v\in L^{\infty}(\mathbb R^d)$ with
\begin{equation*}
\nabla E(v)\in L^2\!\left(\mathcal C_R ; e^{-\sqrt{\lambda_1}y} y^{1-2s} \dx\dy \right) \cap  L^2_{\mathrm{loc}}\!\left(  \overline{\Omega} ; y^{1-2s} \dx\dy \right) ,
\end{equation*}
where $ \lambda_1>0 $ is the first eigenvalue of $ (-\Delta)_R^s $. Then the identity
\begin{equation}\label{eq250}
\int_{\mathbb R^d} v \, (-\Delta)^s\varphi\, \dx =\mu_s \int_{\mathcal C_R} \langle \nabla E(v), \nabla E_R(\varphi)\rangle\, y^{1-2s} \dx \dy
\end{equation}
holds for every $\varphi\in C^{\infty}_c(B_R)$.
\end{lem}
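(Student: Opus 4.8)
The identity \eqref{eq250} is a ``cross'' integration-by-parts formula linking the global extension $E(v)$ on the half-space with the localised spectral extension $E_R(\varphi)$ on the cylinder $\mathcal{C}_R$. The natural strategy is to integrate $\dive(y^{1-2s}\nabla E(v))\,E_R(\varphi)$ over $\mathcal{C}_R$ (or rather over a truncated region $\mathcal{C}_R \cap \{\varepsilon < y < M\}$) and let the truncation parameters go to their limits, using the PDE $\dive(y^{1-2s}\nabla E(v))=0$ in $\Omega$ (Lemma \ref{ext-prop}) together with the boundary conditions satisfied by $E_R(\varphi)$ in \eqref{eq14}. First I would write, on the truncated cylinder,
\[
0 = \int_{\mathcal{C}_R \cap \{\varepsilon<y<M\}} \dive\!\left(y^{1-2s}\nabla E(v)\right) E_R(\varphi)\, \dx\dy = -\int \langle \nabla E(v),\nabla E_R(\varphi)\rangle y^{1-2s}\,\dx\dy + \text{boundary terms},
\]
and symmetrically I would use $\dive(y^{1-2s}\nabla E_R(\varphi))=0$ in $\mathcal{C}_R$ tested against $E(v)$; subtracting (or combining) these kills the interior cross term against itself and isolates the relevant boundary contributions.

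The boundary of the truncated cylinder has four pieces: the bottom $\{y=\varepsilon\}$, the top $\{y=M\}$, and the lateral part $\partial B_R \times (\varepsilon,M)$. On the lateral boundary $E_R(\varphi)=0$ (from \eqref{eq14}), so that piece drops out. On the top $\{y=M\}$, the integrability hypothesis $\nabla E(v)\in L^2(\mathcal{C}_R; e^{-\sqrt{\lambda_1}y}y^{1-2s}\dx\dy)$ combined with the exponential decay estimate \eqref{stima-L2}--\eqref{stima-Hs} for $E_R(\varphi)$ (here $\varphi\in C^\infty_c(B_R)\subset \mathrm{Dom}(-\Delta)^s_R$, and $\sqrt{\lambda_1}$ is precisely the rate appearing in the weight) forces the $\{y=M\}$ contribution to vanish as $M\to\infty$ --- this is the reason the particular weighted space is imposed in the hypothesis. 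On the bottom $\{y=\varepsilon\}$, one is left with $\int_{B_R} \varepsilon^{1-2s}\big(\partial_y E(v)\, E_R(\varphi) - E(v)\,\partial_y E_R(\varphi)\big)\,\dx$ evaluated at $y=\varepsilon$; using $E_R(\varphi)(\cdot,\varepsilon)\to\varphi$ and $E(v)(\cdot,\varepsilon)\to v$ (in the appropriate local topologies, Lemmas \ref{ext-prop} and \ref{lemma-nochetto}), together with the Dirichlet-to-Neumann identities \eqref{eq1}--\eqref{eq2} for $E(v)$ and \eqref{limit} for $E_R(\varphi)$ (the latter applicable since $\varphi\in\mathrm{Dom}(-\Delta)^s_R$), the limit $\varepsilon\downarrow 0$ produces
\[
\tfrac{1}{\mu_s}\int_{B_R} v\,(-\Delta)^s\varphi\,\dx \quad\text{and}\quad \tfrac{1}{\mu_s}\int_{B_R} v\,(-\Delta)^s_R\varphi\,\dx
\]
from the two terms. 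Rearranging and matching with the cross-term gives \eqref{eq250}. (One should be a little careful that $(-\Delta)^s\varphi$ is not supported in $B_R$, but since $v\in L^\infty$ the integral over all of $\mathbb{R}^d$ is finite and the contribution of $v\,(-\Delta)^s_R\varphi$, supported in $B_R$, is exactly what combines with the cross term.)

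The main obstacle is the rigorous justification of the boundary-term limits, in two respects. First, the conormal-derivative limits $\lim_{y\downarrow 0} y^{1-2s}\partial_y E(v)$ and $\lim_{y\downarrow 0} y^{1-2s}\partial_y E_R(\varphi)$ hold only in a distributional / $L^2$ sense, not pointwise, so one must pair them against the correspondingly converging factors $E_R(\varphi)(\cdot,y)\to\varphi$ and $E(v)(\cdot,y)\to v$ and argue that the products converge --- this requires the local $L^2$ control $\nabla E(v)\in L^2_{\mathrm{loc}}(\overline{\Omega};y^{1-2s}\dx\dy)$ near $y=0$, which is exactly the second integrability hypothesis. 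Second, the interchange of the $\varepsilon\downarrow 0$ and $M\to\infty$ limits with the integration by parts needs a genuine approximation: I would first test against $\dive(y^{1-2s}\nabla E_R(\varphi))$ using smooth compactly supported approximations of $E(v)$ in $X^s_0(\mathcal C_R)$-type spaces (justified by the closure definition and the weighted energy bound \eqref{stima-integrale}), and only then pass to the limit. Handling these two points carefully --- essentially a weighted-trace argument on the degenerate/singular boundary $\{y=0\}$ --- is where the real work lies; the algebra of combining the two integrations by parts is routine once the traces are under control.
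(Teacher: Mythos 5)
Your overall skeleton (Green's identity on a truncated cylinder, lateral and top boundary terms vanishing, the bottom boundary producing the fractional Laplacian pairing) is the right shape, but there are two concrete problems. First, the ``symmetric subtraction'' step is both wrong and counterproductive. In the combined identity the boundary integrand is $y^{1-2s}\left[\partial_{\mathbf n}E(v)\,E_R(\varphi)-E(v)\,\partial_{\mathbf n}E_R(\varphi)\right]$; on the lateral boundary $\partial B_R\times(\varepsilon,M)$ only the \emph{first} summand vanishes (because $E_R(\varphi)=0$ there), while $E(v)\,\partial_{\mathbf n}E_R(\varphi)$ survives and is not small. Dropping it would lead you to conclude $\int_{B_R} v\,(-\Delta)^s_R\varphi = \int_{\mathbb R^d} v\,(-\Delta)^s\varphi$, which is false in general: the discrepancy between the spectral and the restricted operators is exactly this lateral term. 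Moreover, subtracting the two Green identities cancels the cross term $\int\langle\nabla E(v),\nabla E_R(\varphi)\rangle y^{1-2s}$ that you are trying to compute, so there is nothing left to ``rearrange and match''. The correct combination within your framework is the \emph{single} identity obtained by testing $\dive(y^{1-2s}\nabla E(v))=0$ against $E_R(\varphi)$: then the lateral term genuinely vanishes and the bottom term alone should produce $\tfrac1{\mu_s}\int_{\mathbb R^d} v\,(-\Delta)^s\varphi$.

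Second, and more seriously, that bottom term requires the weak Neumann trace $-\mu_s\lim_{y\downarrow0}y^{1-2s}\partial_yE(v)=(-\Delta)^s v$ for a function $v$ that is merely bounded with $\nabla E(v)\in L^2_{\mathrm{loc}}(\overline\Omega;y^{1-2s}\dx\dy)$. The paper explicitly makes \eqref{eq1}--\eqref{eq2} available only when applied to smooth compactly supported functions (Lemma \ref{prop-compact}); for general $v\in L^\infty$ the quantity $y^{1-2s}\partial_yE(v)(\cdot,y)$ converges only distributionally, and the $L^2(B_R)$ convergence $E_R(\varphi)(\cdot,y)\to\varphi$ from \eqref{limit-0} is too weak to pair with it. Your ``weighted-trace argument'' at $\{y=0\}$ is therefore not a technical footnote: it is essentially equivalent to the lemma itself, and you have deferred rather than supplied it. The paper's proof is built precisely to avoid ever taking the Neumann trace of $E(v)$: it inserts the \emph{global} extension $E(\varphi)$ as an intermediary, observes that $E(\varphi)-E_R(\varphi)$ has zero trace on $\{y=0\}$ (so pairing it with $\dive[y^{1-2s}\nabla(E(v)\eta_k)]$ creates no bottom boundary term), and the only conormal derivative that is ever evaluated at $y=0$ is that of $E(\varphi)$, for which \eqref{u2} gives uniform convergence; the far-field contributions of the cut-offs $\eta_k$ are then controlled by the decay estimates \eqref{eq52}--\eqref{eq53} and \eqref{stima-L2}--\eqref{stima-Hs}. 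To make your route rigorous you would have to prove the weak Dirichlet-to-Neumann identity for bounded $v$ of locally finite extension energy, which is a genuinely missing ingredient.
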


\begin{lem}\label{lem7}
Let $v\in L^{\infty}(\mathbb R^d)$ with $(-\Delta)^s v\in L^\infty(\mathbb R^d)$, in the distributional sense. Let $\alpha>0$ and $ r \ge 1 $. Then the following energy estimate holds:
\begin{equation}\label{est-energy}
\begin{aligned}
\int_{\mathcal{C}_r} \left| \nabla E(v) \right|^2 \rho_\alpha(y) \, y^{1-2s}  \, \dx \dy
\le  \frac{2}{\mu_s} \int_{B_{2r}}  v \, (-\Delta)^s v \, \dx +  4C^2 \, \| v \|_\infty^2 \left|B_{2r}\right| \int_0^{\infty} \rho_\alpha(y) \, y^{1-2s}  \, \dy \, ,
\end{aligned}
\end{equation}
where $ \rho_\alpha(y) $ is any regular positive function on $ [0,\infty) $ satisfying
\begin{equation}\label{eq-rho-1}
\left|\rho_\alpha^\prime(y) \right| \le C \, \rho_\alpha(y) \qquad \forall y \ge 0
\end{equation}
and
\begin{equation}\label{eq-rho-2}
\rho_\alpha(y) = 1 \quad \forall y \in [0,1] \, , \qquad
c \, e^{-\alpha y} \le \rho_\alpha(y) \le C \, e^{-\alpha y} \quad \forall y \ge 1 \, ,
\end{equation}
for suitable positive constants $c,C $, with $C$ large enough.
\end{lem}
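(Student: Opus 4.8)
The plan is to obtain the energy estimate by testing the extension equation $\dive(y^{1-2s}\nabla E(v))=0$ against $E(v)$ times a suitable cut-off, integrating by parts, and carefully controlling the boundary and lateral terms that arise. Concretely, I would fix a cut-off $\gamma=\gamma(x)\in C^\infty_c(B_{2r})$ with $\gamma\equiv 1$ on $B_r$, $0\le\gamma\le 1$, and $|\nabla\gamma|\le C/r\le C$ (using $r\ge1$), and consider the test function $\gamma^2(x)\,\rho_\alpha(y)\,E(v)(x,y)$ on the cylinder $\mathcal{C}_{2r}$. Since $E(v)$ is smooth in $\Omega$, bounded (by the maximum principle for the Poisson kernel, $\|E(v)(\cdot,y)\|_\infty\le\|v\|_\infty$), and $\rho_\alpha$ decays exponentially, all integrals below converge; the weight $\rho_\alpha$ is precisely what makes the computation legitimate up to $y=+\infty$ without a cut-off in $y$.

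The key computation: multiplying the extension equation by $\gamma^2\rho_\alpha E(v)$ and integrating over $\mathcal{C}_{2r}$, integration by parts in $\Omega$ produces
\[
\int_{\mathcal{C}_{2r}} y^{1-2s}\,\nabla E(v)\cdot\nabla\!\left(\gamma^2\rho_\alpha E(v)\right)\dx\dy
= \int_{B_{2r}} \gamma^2(x)\, v(x)\left(-\frac{\partial E(v)}{\partial y^{2s}}\right)\frac{\dx}{\mu_s}
= \frac{1}{\mu_s}\int_{B_{2r}} \gamma^2\, v\,(-\Delta)^s v\,\dx,
\]
where I use $\rho_\alpha(0)=1$, the trace identity \eqref{eq1}--\eqref{eq2} (valid distributionally, which suffices here since the right-hand side is an $L^\infty$ function paired against $\gamma^2 v\in L^1$), and the fact that the lateral boundary of $\mathcal{C}_{2r}$ contributes nothing because $\gamma$ vanishes there. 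Expanding the gradient on the left,
\[
\nabla\!\left(\gamma^2\rho_\alpha E(v)\right) = \gamma^2\rho_\alpha\,\nabla E(v) + 2\gamma\rho_\alpha E(v)\,\nabla\gamma + \gamma^2 E(v)\,\rho_\alpha'\,e_y,
\]
so the leading term is $\int \gamma^2\rho_\alpha|\nabla E(v)|^2 y^{1-2s}$, which dominates $\int_{\mathcal{C}_r}\rho_\alpha|\nabla E(v)|^2 y^{1-2s}$. The two remaining terms are error terms to be absorbed: the $\nabla\gamma$ term is bounded, using Young's inequality $2\gamma|\nabla\gamma|\,|E(v)|\,|\nabla E(v)|\le\frac12\gamma^2|\nabla E(v)|^2+2|\nabla\gamma|^2|E(v)|^2$, by $\tfrac12\int\gamma^2\rho_\alpha|\nabla E(v)|^2 y^{1-2s}$ (absorbed into the left) plus $2\int|\nabla\gamma|^2 \rho_\alpha |E(v)|^2 y^{1-2s}\le 2C^2\|v\|_\infty^2|B_{2r}|\int_0^\infty\rho_\alpha y^{1-2s}\dy$; the $\rho_\alpha'$ term is handled identically using \eqref{eq-rho-1}, $|\rho_\alpha'|\le C\rho_\alpha$, together with another Young inequality, contributing a further $\tfrac14\int\gamma^2\rho_\alpha|\nabla E(v)|^2 y^{1-2s}$ (absorbed) plus a term bounded by $C^2\|v\|_\infty^2|B_{2r}|\int_0^\infty\rho_\alpha y^{1-2s}\dy$. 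Collecting constants and relabelling $C$ (taking it large enough) gives exactly \eqref{est-energy} with the stated factor $2/\mu_s$ and the $4C^2$ in front of the second term.

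The main obstacle I anticipate is \emph{rigorously justifying the integration by parts up to the boundary $\{y=0\}$ and the use of the Dirichlet-to-Neumann identity \eqref{eq2} in this low-regularity setting}: $E(v)$ is only known to be bounded with $L^2_{\mathrm{loc}}$ weighted gradient, and the trace $\lim_{y\downarrow0}y^{1-2s}\partial_y E(v)$ exists only in a distributional sense. The clean way around this is to integrate by parts first on the truncated cylinder $\mathcal{C}_{2r}\cap\{y>\varepsilon\}$, where everything is smooth, obtaining a surface integral at level $y=\varepsilon$ of the form $-\int_{B_{2r}}\gamma^2\rho_\alpha(\varepsilon) E(v)(x,\varepsilon)\,\varepsilon^{1-2s}\partial_y E(v)(x,\varepsilon)\dx$, and then pass to the limit $\varepsilon\downarrow0$: the factor $\varepsilon^{1-2s}\partial_y E(v)(x,\varepsilon)$ converges (in the appropriate dual/distributional sense, after testing against the smooth compactly supported $\gamma^2 E(v)(\cdot,\varepsilon)\to\gamma^2 v$, using that $(-\Delta)^s v\in L^\infty$) to $-\mu_s^{-1}(-\Delta)^s v$, while on the remaining volume integrals one uses monotone/dominated convergence with the $\rho_\alpha$-weight providing integrability. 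A secondary, minor point is checking that the intermediate quantity $\int_{\mathcal{C}_r}|\nabla E(v)|^2\rho_\alpha y^{1-2s}\dx\dy$ is a priori finite so that the absorption argument is not vacuous; this follows because the right-hand side of \eqref{est-energy} is finite and the absorbed terms were bounded by it, but one should run the argument on $\mathcal{C}_r\cap\{y<M\}$ first and let $M\to\infty$ to be safe.
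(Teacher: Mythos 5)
Your overall strategy coincides with the paper's: a Caccioppoli-type estimate obtained by testing the degenerate-harmonic equation for the extension against the extension itself times a cut-off in $x$ and the weight $\rho_\alpha$ in $y$, integrating by parts, identifying the boundary term at $\{y=0\}$ with $\mu_s^{-1}\int v\,(-\Delta)^s v$, and absorbing the cross terms via Young's inequality (your use of $\gamma^2$ versus the paper's $\gamma_r$ with $|\nabla\gamma_r|^2\le C\gamma_r$ is an immaterial variant). The genuine difference is in how the low regularity of $v$ is handled, and this is exactly where your plan is thinnest. The paper does \emph{not} attempt to pass to the limit $y=\varepsilon\downarrow 0$ in the Neumann trace of $E(v)$ itself; it explicitly warns, after \eqref{eq2}, that for general bounded $v$ the identity $-\partial E(v)/\partial y^{2s}=(-\Delta)^s v$ is ``purely formal'' and is only ever invoked for smooth compactly supported data. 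Accordingly, the proof in Appendix \ref{App3} first replaces $v$ by $v_\varepsilon\gamma_R$ (mollification plus spatial truncation), for which the classical pointwise trace identity \eqref{u2} of Lemma \ref{prop-compact} applies and the integration by parts is elementary, and only afterwards lets $R\to\infty$ and $\varepsilon\downarrow0$ using \eqref{lap-conv}--\eqref{lap-conv-2} and the stability of the extension and of the right-hand side under these limits.

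By contrast, your route requires proving that $\varepsilon^{1-2s}\partial_y E(v)(\cdot,\varepsilon)$ converges to $-\mu_s^{-1}(-\Delta)^s v$ in a sense strong enough to be paired with the merely $L^p_{\mathrm{loc}}$-convergent family $\gamma^2 E(v)(\cdot,\varepsilon)$, for $v$ that is only bounded with distributional $(-\Delta)^s v\in L^\infty$. This is not a routine remark: one needs, say, a uniform bound $\|y^{1-2s}\partial_y E(v)(\cdot,y)\|_\infty\le \mu_s^{-1}\|(-\Delta)^s v\|_\infty$ together with weak-$*$ convergence, and the natural way to establish these is again by mollifying $v$ (or by proving the conjugate-extension representation of the Neumann derivative), i.e.\ by doing essentially what the paper's proof does. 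So your plan is completable, and you correctly flag this as the main obstacle, but as written the crucial convergence is asserted rather than proved; the paper's choice to regularize the datum first, rather than to truncate in $y$, is precisely what turns that assertion into an application of the already-established Lemma \ref{prop-compact}.
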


The proofs of Lemmas \ref{lem6} and \ref{lem7} will be given in Appendix  \ref{App2} and \ref{App3}, respectively. The aim of the next result is to show that a nonnegative solution to \eqref{eq6} is in fact a supersolution to problem \eqref{eq17f}.

\begin{pro}\label{ineq}
Let $ \Phi $ satisfy \eqref{hp-phi} with $\Phi(0)=0$. Let $u$ be a nonnegative very weak solution to problem \eqref{eq6}, in the sense of Definition \ref{defsol2}. Then, for a.e.~$T> 0$ and every $ R\ge1 $, there holds
\begin{equation}\label{eq18f2}
\begin{aligned}
\int_0^T \int_{B_R} u \, \psi_t \,\dx \dt \le & \,  \int_0^T\int_{B_R} \Phi(u) \, (-\Delta)^s_R\,\psi \, \dx \dt \\
& \, + \int_{B_R} u(x,T) \, \psi(x,T)\,\dx - \int_{B_R} u_0(x) \, \psi(x,0)\,\dx
\end{aligned}
\end{equation}
for any nonnegative $\psi \in C^1\!\left([0, T]; L^2(B_R)\right)\cap L^2\!\left((0, T);\operatorname{Dom}(-\Delta)_R^s\right)$.
\end{pro}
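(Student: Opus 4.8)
The plan is to rewrite the definition of very weak solution \eqref{eq7}, which uses the \emph{global} operator $(-\Delta)^s$ and test functions $\varphi\in C^\infty_c(\mathbb{R}^d\times[0,T])$, in terms of the \emph{spectral} operator $(-\Delta)^s_R$ on the ball. The essential difficulty is that, although $\psi(\cdot,t)\in C^\infty_c(B_R)$ would produce $(-\Delta)^s_R\psi$ on $B_R$ directly, the global fractional Laplacian $(-\Delta)^s\psi$ does \emph{not} vanish outside $B_R$, so \eqref{eq7} and \eqref{eq18f2} genuinely involve different quantities. The key bridge is Lemma \ref{lem6}: for a fixed $\varphi\in C^\infty_c(B_R)$, provided $v=\Phi(u(\cdot,t))$ has an extension $E(v)$ with enough integrability, identity \eqref{eq250} converts $\int_{\mathbb{R}^d} v\,(-\Delta)^s\varphi\,\dx$ into the symmetric bilinear form $\mu_s\int_{\mathcal{C}_R}\langle\nabla E(v),\nabla E_R(\varphi)\rangle\,y^{1-2s}\dx\dy$, and this form, by \eqref{stima-integrale} and \eqref{limit}, equals $\int_{B_R} v\,(-\Delta)^s_R\varphi\,\dx$ when $\varphi\in\mathrm{Dom}(-\Delta)^s_R$. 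Since the form is nonnegative-definite in the appropriate sense and $v=\Phi(u)\ge0$, the passage from a global test function to a ball test function can only lose mass in the right direction, producing the inequality rather than an equality.

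Concretely, I would proceed as follows. First, reduce to smooth, compactly supported test functions: by the density of $C^\infty_c(B_R)$ in $\mathrm{Dom}(-\Delta)^s_R$ (and a standard approximation in the time variable, writing $\psi(x,t)=\sum$ of products or mollifying), it suffices to prove \eqref{eq18f2} for nonnegative $\psi$ of the form $\psi(x,t)=\eta(t)\varphi(x)$ with $\varphi\in C^\infty_c(B_R)$, $\varphi\ge0$, and then extend by linearity and limits. Second, verify the hypothesis of Lemma \ref{lem6} for $v=\Phi(u(\cdot,t))$: since $u\in L^\infty(\mathbb{R}^d\times\mathbb{R}^+)$ and $\Phi$ is locally Lipschitz with $\Phi(0)=0$, we have $\Phi(u(\cdot,t))\in L^\infty(\mathbb{R}^d)$ uniformly in $t$; the required local energy bound $\nabla E(\Phi(u))\in L^2_{\mathrm{loc}}(\overline\Omega;y^{1-2s}\dx\dy)$ together with the weighted global bound on $\mathcal{C}_R$ is exactly the content of Lemma \ref{lem7} applied with a weight $\rho_\alpha$ of the type in \eqref{eq-rho-1}--\eqref{eq-rho-2} and $\alpha=\sqrt{\lambda_1}$ — but here one must be careful that Lemma \ref{lem7} requires $(-\Delta)^s v\in L^\infty$, which for a very weak solution need not hold a priori; this forces a preliminary regularization step, replacing $u$ by a mollified-in-time version $u_\varepsilon$ (or by the Moreau–Yosida-type approximations implicit in the construction of solutions) for which the energy estimate is legitimate, proving the inequality for $u_\varepsilon$, and passing to the limit $\varepsilon\to0$ using the weak-$*$ convergence in $L^\infty$ and the local energy bounds to control the right-hand side of \eqref{eq250}.

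Third, with Lemma \ref{lem6} in hand, replace $(-\Delta)^s\varphi$ by the ball operator: for $\varphi\in C^\infty_c(B_R)\subset\mathrm{Dom}(-\Delta)^s_R$, combine \eqref{eq250} with the identity $\mu_s\int_{\mathcal{C}_R}\langle\nabla E(v),\nabla E_R(\varphi)\rangle y^{1-2s} = \int_{B_R} v\,(-\Delta)^s_R\varphi\,\dx$ — the latter being the $L^2$-pairing version of \eqref{limit}, obtained by integrating by parts in the $(x,y)$ cylinder against the harmonic extension $E_R(\varphi)$, using that $E_R(\varphi)$ vanishes on the lateral boundary $\partial\mathcal{C}_R\cap\{y>0\}$ and decays by \eqref{stima-L2}--\eqref{stima-Hs}, and that the conormal derivative of $E_R(\varphi)$ at $y=0$ is $(-\Delta)^s_R\varphi/\mu_s$. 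This yields $\int_{\mathbb{R}^d} v\,(-\Delta)^s\varphi\,\dx = \int_{B_R} v\,(-\Delta)^s_R\varphi\,\dx + \big(\text{error from }\nabla E(v)\text{ on }\mathcal{C}_R\text{ vs. matching }E_R(\varphi)\big)$, and the sign of the leftover term — which is where nonnegativity of $v=\Phi(u)$ and the maximum principle for the degenerate-elliptic extension enter — gives the direction $\le$ in \eqref{eq18f2}. Plugging this into \eqref{eq7} restricted to $\psi=\eta(t)\varphi(x)$, the time-integral and the two boundary terms at $t=0,T$ restrict cleanly to $B_R$ because $\varphi$ is supported there, and we obtain \eqref{eq18f2}. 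The main obstacle, as flagged, is the justification of the energy identity \eqref{eq250} for a merely very weak (hence a priori low-regularity) solution: one must secure the integrability of $\nabla E(\Phi(u))$ via approximation before Lemmas \ref{lem6}--\ref{lem7} become applicable, and the limiting argument must be compatible with the spatial cut-off structure so that no mass escapes through the interface at $y=0$ or the lateral boundary of the cylinder.
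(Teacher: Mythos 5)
Your proposal follows essentially the same route as the paper: Lemma \ref{lem6} converts the global pairing $\int_{\mathbb{R}^d}\Phi(u)\,(-\Delta)^s\varphi\,\dx$ into the cylinder bilinear form, an integration by parts against $E_R(\psi)$ then produces a \emph{lateral} boundary term (the pairing is an inequality, not the identity you first assert, precisely because $E(\Phi(u))$, unlike $E_R$ of something, does not vanish on $\partial\mathcal{C}_R\cap\{y>0\}$) whose sign is fixed by $E(\Phi(u))\ge0$ together with $\frac{\partial E_R(\psi)}{\partial \mathbf{n}}\le0$, and the a priori bound on $\nabla E(\Phi(u))$ is secured by a time-regularization before Lemmas \ref{lem6}--\ref{lem7} are invoked. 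The one step you leave implicit is the mechanism that makes the regularization work: the paper takes the Steklov average of $\Phi(u)$ itself (not $\Phi$ of a mollified $u$), because the very weak formulation then gives $(-\Delta)^s (\Phi(u))_h=\frac{u(\cdot,t)-u(\cdot,t+h)}{h}\in L^\infty$, which is exactly the hypothesis of Lemma \ref{lem7}; without this identity the energy estimate for the regularized object is not ``legitimate'' for free.
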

\begin{proof}
To begin with, without loss of generality, we assume that
\begin{equation}\label{ass-1}
\nabla E(\Phi(u))\in L^2\!\left(\mathcal C_R \times [0,T] ; e^{-\sqrt{\lambda_1}y} y^{1-2s} \dx\dy\dt \right) \cap  L^2_{\mathrm{loc}}\!\left(  \overline{\Omega} \times [0,T] ; y^{1-2s} \dx\dy\dt \right) .
\end{equation}
We will explain at the end of the proof how it is possible to get rid of such hypothesis. Hence, by virtue of Lemma \ref{lem6} with $v\equiv \Phi(u)(\cdot, t)$, for a.e.~$t>0$ we obtain the identity
\begin{equation}\label{eq403}
\int_{\mathbb R^d} \Phi(u)(x, t) \,(-\Delta)^s\varphi(x,t) \, \dx  = \mu_s \int_{\mathcal C_R} \left \langle \nabla E(\Phi(u))(x, y, t) , \nabla E_R(\varphi)(x,y,t) \right \rangle y^{1-2s} \dx \dy\, ,
\end{equation}
valid for every $ \varphi \in C^\infty_c(B_R\times [0, T])$. A time integration of \eqref{eq403} that takes advantage of \eqref{eq7} yields
\begin{equation}\label{eq230f}
\begin{aligned}
\int_0^T\int_{B_R} u \, \varphi_t \,\dx\dt = & \, \mu_s \int_0^T  \int_{\mathcal C_R} \left \langle \nabla E(\Phi(u)) , \nabla E_R(\varphi) \right \rangle y^{1-2s} \dx \dy \dt \\
& \, + \int_{B_R} u(x,T) \, \varphi(x,T)\,\dx - \int_{B_R} u_0(x) \, \varphi(x,0)\,\dx \, .
\end{aligned}
\end{equation}
Given any $ \psi $ as in \eqref{eq18f2}, one can find a sequence $ \{ \varphi_n \}_{n \in \mathbb{N}} \subset C^\infty_c(B_R\times [0, T]) $ such that
$$
\varphi_n \xrightarrow[n\to\infty]{} \psi \quad \text{in } L^2\!\left([0,T]; H^s_0(B_R) \right) \qquad \text{and} \qquad \left(\varphi_n\right)_t \xrightarrow[n\to\infty]{} \psi_t \quad \text{in } L^2\!\left([0,T]; L^2(B_R) \right) .
$$
This can be achieved, for instance, by combining a time convolution of $ \psi $ and the density of $ C_c^\infty(B_R) $ in $ H^s_0(B_R) $ (we omit details). As a consequence, upon applying \eqref{eq230f} to $ \varphi \equiv \varphi_n $, letting $ n \to \infty $ and exploiting \eqref{stima-Hs}, \eqref{stima-integrale}, \eqref{ass-1}, we can deduce the validity of the same identity with $ \varphi = \psi $.

Now we focus on the integral term in \eqref{eq230f} involving gradients. Let $ \xi$ be defined as in \eqref{cutoff-y} and, correspondingly, for all $ k \in \mathbb{N} $ put
\begin{equation}\label{cutoff-y-k}
\xi_k(y) := \xi\!\left( \frac{y}{k} \right) \qquad \forall y \ge 0 \, .
\end{equation}
Furthermore, for every $ \varepsilon>0 $ let us introduce the ``lifted'' cylinder
$$
\mathcal C_{R, \varepsilon}:= \mathcal C_R \cap \{y>\varepsilon\} \, .
$$
Note that the extended functions $ E(\Phi(u)) $ and $ E_R(\psi) $ are smooth in $ \mathcal C_{R, \varepsilon} $ but they need not be in $\mathcal C_{R}$. In the next passages we will omit explicit time dependence, in order to lighten notations. For a.e.~$t\in (0, T)$ and every $ k>\varepsilon $, a standard integration by parts reveals that
\begin{equation}\label{eq235f}
\begin{aligned}
& \int_{\mathcal C_{R, \varepsilon}} \left \langle \nabla E(\Phi(u)) , \nabla \! \left(E_R(\psi) \, \xi_k \right) \right \rangle y^{1-2s} \dx \dy \\
 = & - \int_{\mathcal C_{R, \varepsilon}} E(\Phi(u)) \dive\!\left[ y^{1-2s} \, \nabla \!\left(E_R(\psi)\,\xi_k\right) \right] \dx \dy  - \int_{B_R\times\{\varepsilon\}} E(\Phi(u)) \, y^{1-2s} \, \frac{\partial E_R(\psi)}{\partial y}\, \dx \\
& + \int_{\partial \mathcal C_R\cap \{y>\varepsilon\}} E(\Phi(u)) \, \xi_k \, y^{1-2s} \, \frac{\partial E_R(\psi)}{\partial {\bf n}}\, \dS \,
\\
\le & - \int_{\mathcal C_{R, \varepsilon}} E(\Phi(u)) \dive\!\left[ y^{1-2s} \, \nabla \!\left(E_R(\psi)\xi_k\right) \right] \dx \dy - \int_{B_R\times\{\varepsilon\}} E(\Phi(u)) \, y^{1-2s} \, \frac{\partial E_R(\psi)}{\partial y}\, \dx \, , \\
\end{aligned}
\end{equation}
where $ \mathbf{n} $ and $ \dS $ stand for the outer unit normal and the $d$-dimensional Hausdorff measure on $ \partial \mathcal{C}_R $, respectively. Note that in \eqref{eq235f} we have used the property
$$\frac{\partial E_R(\psi)}{\partial {\bf n}} \leq 0 \qquad \text{on } \partial \mathcal{C}_R \, , $$
which holds because $ E_R(\psi) $ is nonnegative in $ \mathcal{C}_R $ and vanishes on $ \partial \mathcal{C}_R $. On the other hand, since
\[
\dive\!\left(y^{1-2s} \nabla E_R(\psi) \right) = 0 \qquad \text{in } \mathcal C_R \, ,
\]
we deduce that
\begin{equation}\label{eq234f}
\begin{aligned}
\int_{\mathcal C_{R, \varepsilon}} E(\Phi(u)) \dive\!\left[ y^{1-2s} \, \nabla \!\left(E_R(\psi) \, \xi_k\right) \right] \dx \dy = & \, 2 \int_{\mathcal C_{R, \varepsilon}} E(\Phi(u)) \left\langle \nabla E_R(\psi) , \nabla \xi_k \right\rangle  y^{1-2s}  \dx \dy  \\
 & + \int_{\mathcal C_{R, \varepsilon}} E(\Phi(u)) \, E_R(\psi) \dive\!\left( y^{1-2s} \, \nabla \xi_k \right) \dx \dy \,.
\end{aligned}
\end{equation}
By virtue of \eqref{cutoff-y} and \eqref{cutoff-y-k}, it is easy to check that
\begin{equation}\label{prop-cutoff}
\left|\nabla \xi_k \right| \leq \frac{C}{y} \, \chi_{[k, k+1]} \, , \qquad \left|\dive\!\left( y^{1-2s} \, \nabla \xi_k \right)\right|\leq \frac{C}{y^{1+2s}} \, \chi_{[k,k+1]} \, ,
\end{equation}
where $C>0$ is a suitable constant independent of $k$. Hence, thanks to \eqref{eq234f}--\eqref{prop-cutoff}, \eqref{stima-L2}--\eqref{stima-Hs} and the uniform boundedness of $ E(\Phi(u)) $, there holds
\begin{equation*}
\int_{\mathcal C_{R, \varepsilon}} E(\Phi(u)) \dive\!\left[ y^{1-2s} \, \nabla \!\left(E_R(\psi)\,\xi_k\right) \right] \dx \dy \xrightarrow[k\to\infty]{} 0 \, .
\end{equation*}
Besides, property \eqref{ass-1} and \eqref{stima-L2}--\eqref{stima-integrale} ensure that
\begin{equation*}
\left\langle \nabla E(\Phi(u)), \nabla E_R(\psi) \right\rangle , \, \nabla E(\Phi(u)) E_R(\psi) \in L^1\!\left(\mathcal C_R ; y^{1-2s} \dx\dy \right) ,
\end{equation*}
whence, exploiting again \eqref{prop-cutoff}, we can assert that
\begin{equation}\label{eq237f}
\int_{\mathcal C_{R, \varepsilon}} \left \langle \nabla E(\Phi(u)) , \nabla \! \left(E_R(\psi) \, \xi_k \right) \right \rangle y^{1-2s} \dx \dy \xrightarrow[k\to\infty]{} \int_{\mathcal C_{R, \varepsilon}} \left \langle \nabla E(\Phi(u)) , \nabla E_R(\psi) \right \rangle y^{1-2s} \dx \dy \, .
\end{equation}
Thus, from \eqref{eq235f}--\eqref{eq237f} we obtain the inequality
\begin{equation}\label{eq238f}
\begin{aligned}
\int_{\mathcal C_{R, \varepsilon}} \left \langle \nabla E(\Phi(u)) , \nabla E_R(\psi) \right \rangle y^{1-2s} \dx \dy
\leq - \int_{B_R\times\{\varepsilon\}} E(\Phi(u)) \, y^{1-2s} \, \frac{\partial E_R(\psi)}{\partial y}\, \dx \,.
\end{aligned}
\end{equation}
In view of Lemmas \ref{ext-prop} and \ref{lemma-nochetto}, by letting $\varepsilon\downarrow 0 $ in \eqref{eq238f} we end up with
\begin{equation}\label{eq239f}
\begin{aligned}
\mu_s \int_{\mathcal C_{R}} \left \langle \nabla E(\Phi(u)) , \nabla E_R(\psi) \right \rangle y^{1-2s} \dx \dy
\leq \int_{B_R} \Phi(u) \, (-\Delta)^s_R \, \psi \, \dx \,.
\end{aligned}
\end{equation}
A time integration of \eqref{eq239f} and \eqref{eq230f} applied to $ \varphi \equiv \psi $ yield \eqref{eq18f2}.

Let us finally remove the extra assumption \eqref{ass-1}. Given $h>0$, to any $f\in L^1_{\mathrm{loc}}([0, \infty))$ we associate its {\it Steklov average}, defined as
\[
f_h(t):=\frac 1 h\int_{t}^{t+h}f(s) \, \ds \,.
\]
It is not difficult to check that the Steklov averages of $u$ and $\Phi(u)$, that is $u_h $ and $ (\Phi(u))_h$, satisfy the very weak formulation
\begin{equation*}
\begin{aligned}
\int_0^T\int_{\mathbb{R}^d} u_h \, \varphi_t \,\dx\dt = \, & \int_0^T\int_{\mathbb{R}^d}(\Phi(u))_h \, (-\Delta)^s\varphi \, \dx\dt \\
&+\int_{\mathbb{R}^d}u_h(x,T)\,\varphi(x,T)\,\dx - \int_{\mathbb{R}^d}u_h(x, 0)\,\varphi(x,0)\,\dx
\end{aligned}
\end{equation*}
for a.e.~$T>0$, for every $\varphi\in C^\infty_c(\mathbb R^d\times[0,T])$. Moreover, by construction, it is plain that
\begin{equation}\label{eq256}
(\Phi(u))_h\in L^\infty(\mathbb R^d\times (0, \infty))
\end{equation}
and
\begin{equation*}
(-\Delta)^s (\Phi(u))_h(\cdot,t) = \frac{u(\cdot,t)-u(\cdot,t+h)}{h} \qquad \text{in } \mathcal D^\prime(\mathbb R^d) \, , \ \text{for a.e. } t>0 \, .
\end{equation*}
Since $u\in L^\infty(\mathbb R^d\times (0, +\infty))$, in view of \eqref{eq256} we are allowed to apply Lemma \ref{lem7}, which in particular ensures that
\begin{equation}\label{est-energy-um}
\begin{aligned}
& \int_{\mathcal{C}_R} \left| \nabla E((\Phi(u))_h)(x,y,t) \right|^2 \rho_{\lambda_1}(y) \, y^{1-2s}  \, \dx \dy \\
\le & \, \frac{2}{\mu_s} \int_{B_{2R}} \left| (\Phi(u))_h(x,t) \, (-\Delta)^s (\Phi(u))_h(x,t) \right| \dx +  4C^2 \, \| \Phi(u) \|_\infty^{2} \left|B_{2R}\right| \int_0^{\infty} \rho_{\lambda_1}(y) \, y^{1-2s}  \, \dy \, ,
\end{aligned}
\end{equation}
for a.e.~$ t>0 $. A time integration of \eqref{est-energy-um} then shows that \eqref{ass-1} is satisfied upon replacing $ \Phi(u) $ with $ (\Phi(u))_h $. Hence, by repeating the same arguments as above with $ u_h $ in place of $u$ and $ (\Phi(u))_h $ in place of $\Phi(u)$, we can deduce the validity of
\begin{equation}\label{eq257}
\begin{aligned}
\int_0^T \int_{B_R} u_h \, \psi_t \, \dx \dt \le & \, \int_0^T \int_{B_R} (\Phi(u))_h \, (-\Delta)^s_R\,\psi \,\dx \dt \\
& \, + \int_{B_R} u_h(x,T)\,\psi(x,T)\,\dx-\int_{B_R}u_h(x, 0)\,\psi(x,0)\,\dx
\end{aligned}
\end{equation}
for a.e.~$ T>0 $ and any nonnegative $\psi \in C^1\!\left([0, T]; L^2(B_R)\right)\cap L^2\!\left((0, T);\operatorname{Dom}(-\Delta)_R^s\right)$. The thesis then follows upon letting $ h \downarrow 0$ in \eqref{eq257} and exploiting standard convergence properties of the Steklov averages.
\end{proof}

\subsection{Proof of Proposition \ref{comppr}: the duality method}

The proof is based on a well-established technique known in the literature as \emph{duality method} (see \cite{ACP}). The basic idea consists in picking special test functions in \eqref{eq18f} and \eqref{eq18f2}, which formally are solutions to the following \emph{backward} parabolic problems:
\begin{equation}\label{eq15fb}
\begin{cases}
\psi_t = a(x,t) \, (-\Delta)^s_R \, \psi & \text{in } B_R\times (0,T) \, , \\
\psi = 0 & \text{on } \partial B_R \times (0, T) \, ,\\
\psi = \chi & \text{on } B_R \times \{ T \}\,,
\end{cases}
\end{equation}
where $ \chi $ is an arbitrary (sufficiently regular) final datum and $ T>0 $ is a free parameter. The coefficient $ a $ is defined by
\begin{equation}\label{adef}
a(x,t):=\begin{cases}\frac{\Phi(u(x,t))-\Phi(w(x,t))}{u(x,t)-w(x,t)}&\text{if}\ u(x,t)\not=w(x,t) \, , \\
0&\text{otherwise} \, ;
\end{cases}
\end{equation}
for notational convenience, here and hereafter we set $w\equiv u_R$. Note that $u, w$ being bounded and $u\mapsto \Phi(u)$ being locally Lipschitz, $a$ is also bounded; moreover, it is nonnegative $ \Phi $ being nondecreasing. However, in general the existence of a sufficiently regular solution to \eqref{eq15fb} is not guaranteed, so that one has to deal with suitable approximations. Since we rely on (parabolic) semigroup theory, in the corresponding approximating problems in place of $a(x,t)$ we will consider a sequence of functions which are regular, bounded away from zero, piecewise constant in time and suitably converge to $a$.

First of all, we need the following standard result.
In the sequel, by \it mild solution \rm we will mean a solution in the sense of semigroups, see e.g.~the classical reference \cite{P}.
\begin{pro}\label{dual}
Let $R \ge 1 $ and $\beta\in C^{\infty}\!\left(\overline B_R\right)$, with $\beta>0$. Let $\chi\in C_c^\infty(B_R)$, with $0\le \chi\le1$. Then the backward parabolic problem
\begin{equation*}
\begin{cases}
\psi_t = \beta \, (-\Delta)^s_R  \, \psi & \text{in } B_R\times (0, T) \, , \\
\psi = 0 & \text{on } \partial B_R \times (0, T) \, , \\
\psi = \chi & \text{on } B_R \times \{T\} \, ,
\end{cases}
\end{equation*}
admits a unique mild solution. Moreover, $\psi \in C^1\left([0, T]; L^2(B_R)\right) \cap L^2\left((0, T); \operatorname{Dom}(-\Delta)^s_R \right)$ and the following {\em energy estimate} holds:
\begin{equation*}
\int_0^T\int_{B_R} \beta \left[(-\Delta)^s_R \, \psi\right]^2 \dx\dt + \frac 1 2\int_{B_R}\left[(-\Delta)_R^{\frac s 2} \, \psi(x, 0)\right]^2 \dx = \frac 1 2\int_{B_R}\left[(-\Delta)_R^{\frac s 2} \, \chi \right]^2 \dx \, .
\end{equation*}
\end{pro}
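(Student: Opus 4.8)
\textbf{Proof plan for Proposition \ref{dual}.}
The plan is to reduce everything to the linear theory of analytic semigroups. Fix the final time $T$ and reverse time by setting $\tilde\psi(x,t):=\psi(x,T-t)$, so that the backward problem becomes the forward problem
\begin{equation*}
\begin{cases}
\tilde\psi_t = -\beta\,(-\Delta)^s_R\,\tilde\psi & \text{in } B_R\times(0,T)\,,\\
\tilde\psi = 0 & \text{on } \partial B_R\times(0,T)\,,\\
\tilde\psi = \chi & \text{on } B_R\times\{0\}\,.
\end{cases}
\end{equation*}
Work in the Hilbert space $L^2(B_R,\beta^{-1}\dx)$, which is equivalent to $L^2(B_R)$ since $\beta\in C^\infty(\overline B_R)$ is bounded above and below away from zero. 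On this space the operator $A:=\beta\,(-\Delta)^s_R$ with domain $\operatorname{Dom}(-\Delta)^s_R$ is self-adjoint and nonnegative: indeed, for $\varphi_1,\varphi_2\in\operatorname{Dom}(-\Delta)^s_R$, $\langle A\varphi_1,\varphi_2\rangle_{\beta^{-1}}=\int_{B_R}(-\Delta)^s_R\varphi_1\,\varphi_2\,\dx$, which by \eqref{eq401} equals $\int_{B_R}(-\Delta)^{s/2}_R\varphi_1\,(-\Delta)^{s/2}_R\varphi_2\,\dx$, manifestly symmetric and $\ge 0$. Hence $-A$ generates a strongly continuous analytic contraction semigroup $e^{-tA}$ on $L^2(B_R,\beta^{-1}\dx)$, and $\tilde\psi(t):=e^{-tA}\chi$ is the unique mild solution; translating back, $\psi(t)=e^{-(T-t)A}\chi$ is the unique mild solution of the stated backward problem. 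The regularity $\psi\in C^1([0,T];L^2(B_R))\cap L^2((0,T);\operatorname{Dom}(-\Delta)^s_R)$ follows from analyticity together with $\chi\in C_c^\infty(B_R)\subset\operatorname{Dom}(A^k)$ for every $k$: in fact $\chi\in\operatorname{Dom}(-\Delta)^s_R$ already gives $e^{-tA}\chi\in\operatorname{Dom}(A)$ for $t\ge0$ with continuous dependence, and $\tfrac{d}{dt}\tilde\psi=-Ae^{-tA}\chi$ is continuous into $L^2$, whence the claimed $C^1$ and $L^2$-in-time bounds (the latter even up to $t=0$ since $\chi\in\operatorname{Dom}(-\Delta)^s_R$).

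For the energy estimate, test the equation $\tilde\psi_t=-A\tilde\psi$ with $A\tilde\psi$ in $L^2(B_R,\beta^{-1}\dx)$; this is legitimate on $(0,T)$ by the regularity just established (and can be justified at $t=0$ by the extra regularity of $\chi$, or by first proving the identity on $[\delta,T]$ and letting $\delta\downarrow0$). One gets
\begin{equation*}
\Big\langle \tilde\psi_t,\,A\tilde\psi\Big\rangle_{\beta^{-1}} = -\big\langle A\tilde\psi,\,A\tilde\psi\big\rangle_{\beta^{-1}} = -\int_{B_R}\beta\,\big[(-\Delta)^s_R\tilde\psi\big]^2\dx\,,
\end{equation*}
while the left-hand side equals $\tfrac12\tfrac{d}{dt}\langle A\tilde\psi,\tilde\psi\rangle_{\beta^{-1}} = \tfrac12\tfrac{d}{dt}\int_{B_R}\big[(-\Delta)^{s/2}_R\tilde\psi\big]^2\dx$ by symmetry of $A$ and \eqref{eq401}. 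Integrating over $(0,T)$ and using $\tilde\psi(0)=\chi$, $\tilde\psi(T)=\psi(\cdot,0)$ yields
\begin{equation*}
\int_0^T\int_{B_R}\beta\,\big[(-\Delta)^s_R\tilde\psi\big]^2\dx\,\dt + \tfrac12\int_{B_R}\big[(-\Delta)^{s/2}_R\psi(x,0)\big]^2\dx = \tfrac12\int_{B_R}\big[(-\Delta)^{s/2}_R\chi\big]^2\dx\,.
\end{equation*}
Reverting to the original time variable $t\mapsto T-t$ turns the space–time integral of $\beta[(-\Delta)^s_R\tilde\psi]^2$ into the same integral of $\beta[(-\Delta)^s_R\psi]^2$ (Fubini/change of variables), giving exactly the asserted identity.

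The only genuinely delicate point is the justification of the differential identity and the energy estimate up to the endpoints $t=0$ (for the forward problem) — i.e.\ handling the boundary term at initial time and integrating the a priori formal computation. This is dispatched by exploiting that $\chi\in C_c^\infty(B_R)$ lies in $\operatorname{Dom}(A^k)$ for all $k$, so $e^{-tA}\chi$ is smooth in $t\in[0,T]$ valued in $\operatorname{Dom}(-\Delta)^s_R$; alternatively one proves the estimate on $[\delta,T]$ (where analyticity gives all the smoothing for free) and passes to the limit $\delta\downarrow0$ using strong continuity $\tilde\psi(\delta)\to\chi$ in $\operatorname{Dom}(-\Delta)^{s/2}_R$. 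Everything else is the standard Lumer–Phillips / analytic-semigroup machinery from \cite{P} combined with the spectral identity \eqref{eq401}.
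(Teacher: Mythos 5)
Your proof is correct and coincides with what the paper intends: the paper states Proposition \ref{dual} without proof, presenting it as a ``standard result'' and deferring to semigroup theory via the classical reference \cite{P}, and your time-reversal plus self-adjoint contraction-semigroup argument for $A=\beta\,(-\Delta)^s_R$ in $L^2(B_R,\beta^{-1}\dx)$, combined with the spectral identity \eqref{eq401}, is precisely that standard argument. Two minor points worth tightening: self-adjointness of $A$ requires, beyond symmetry and nonnegativity, surjectivity of $A+I$ (a one-line Lax--Milgram argument on $H^s_0(B_R)$), and the claim $C^\infty_c(B_R)\subset\operatorname{Dom}(A^k)$ for all $k$ is both delicate (multiplication by $\beta$ need not preserve the spectral domain) and unnecessary, since your alternative route of proving the identity on $[\delta,T]$ and letting $\delta\downarrow 0$ already suffices.
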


We are now in position to prove Proposition \ref{comppr}.
\begin{proof}[Proof of Proposition \ref{comppr}]
Since $u$ and $ w\equiv u_R$ are bounded by definition and $ \Phi $ is locally Lipschitz, it is plain that $a$ is bounded as well; moreover, it is nonnegative because $\Phi $ is nondecreasing. Then, as a consequence of \eqref{eq18f} and \eqref{eq18f2}, for a.e.~$ T>0 $ and any nonnegative $\psi \in C^1\left([0, T]; L^2(B_R)\right) \cap L^2\left((0, T);\operatorname{Dom}(-\Delta)_R^s \right)$ (as in Proposition \ref{ineq}) there holds
\begin{equation}\label{eq20}
\int_0^T\int_{B_R}(u-w)\left[-a \, (-\Delta)_R^s \, \psi + \psi_t\right] \dx\dt
\le \, \int_{B_R}\left[u(x,T)-w(x,T)\right]\psi(x,T) \, \dx \, .
\end{equation}
Take a sequence of smooth functions $\{a_k\}_{k\in\mathbb{N}}$ in $\overline{B}_R$ such that $\tfrac 1k \le a_k \le \tfrac 1k+\|a\|_\infty$ and
\begin{equation}\label{eq27}
\displaystyle
\frac{a_k-a}{\sqrt{a_k}} \xrightarrow[k\to\infty]{} 0 \qquad \text{in } L^2(B_R\times(0,T)) \, .
\end{equation}
It is not difficult to show that such an approximating sequence does exist, see again \cite{ACP}. Now let $n\in\mathbb N$ and $h=0,\ldots,n$. Put $T_h:=\frac hn \, T$. Finally, for any fixed $k$, let $\{a_{n,k}\}_{n\in\mathbb N}$ be a sequence of functions which are constant in time, and smooth in $x$, in every subinterval $(T_h,T_{h+1})$ and converge to $a_k$ as follows:
\begin{equation}\label{bounds}
a_{n,k} \xrightarrow[n\to\infty]{} a_k \quad \text{pointwise a.e.}, \qquad \frac1{2k}\le a_{n,k}\le\frac{2}k+\|a\|_\infty \quad \forall k,n \in \mathbb{N} \,.
\end{equation}
Take any $\chi\in C^\infty_c(B_R)$ with $0\le\chi\le1$. For every $n, k\in\mathbb{N}$ and $ h\in \{0, \ldots, n-1\}$, let $\psi_{h}$ be recursively defined as the (mild) solution to
\begin{equation}\label{eq15bf}
\begin{cases}
\psi_t = a_{n,k} \, (-\Delta)^s_R \, \psi & \text{in } B_R \times (T_h, T_{h+1}) \, ,\\
\psi = 0 & \text{on } \partial B_R \times (T_h, T_{h+1}) \, ,\\
\psi = \psi_{h+1} & \text{on } B_R \times \{T_{h+1}\}\, ,
\end{cases}
\end{equation}
where we set $\psi_n=\chi$. Note that such a solution exists by virtue of Proposition \ref{dual}. Moreover, since the semigroup associated to the operator $ A := a_{n,k} \, (-\Delta)^s_R$ in the space
$L^2\!\left(B_R;a_{n,k}^{-1} \lfloor_{( T_h,T_{h+1})} \right)$
is Markov (see \cite{BD,P}), we can deduce that
\begin{equation*}
0\le \psi_{h}\le 1 \qquad \text{in }B_R\times(T_h,{T_{h+1}}) \, .
\end{equation*}
Besides, Proposition \ref{dual} ensures that for every $h=0, \ldots, n-1$ there holds
\begin{equation}\label{eq220f}
\begin{aligned}
& \int_{T_h}^{T_{h+1}} \int_{B_R} a_{n,k}\left[(-\Delta)^s_R \psi_h\right]^2 \dx\dt + \frac 1 2\int_{B_R}\left[(-\Delta)_R^{\frac s 2} \, \psi_h(x, T_h)\right]^2 \dx \,\\
 = & \, \frac 1 2\int_{B_R}\left[(-\Delta)_R^{\frac s 2} \, \psi_h(x, T_{h+1})\right]^2 \dx\,.
\end{aligned}
\end{equation}
Put
$$
\psi_{n, k}(x,t):=\psi_{h}(x,t) \quad \forall (x,t) \in B_R \times [T_h, T_{h+1}] \, , \qquad \text{for every}\;\; h=0, \ldots, n-1 \, ;
$$
note that $\psi_{n,k}$ is a well-defined and continuous function in $B_R\times [0, T]$. Summing up both sides of \eqref{eq220f} from $h=0$ to $h=n-1$, we end up with the identity
\begin{equation}\label{eq221f}
\int_{0}^{T}\int_{B_R} a_{n,k} \left[(-\Delta)^s_R \, \psi_{n,k}\right]^2 \dx\dt + \frac 1 2\int_{B_R}\left[(-\Delta)_R^{\frac s 2} \, \psi_{n,k}(x, 0)\right]^2 \dx = \underbrace{\frac 1 2\int_{B_R}\left[(-\Delta)_R^{\frac s 2} \, \chi \right]^2 \dx}_{C_R^2} .
\end{equation}
Hence, from \eqref{eq20} with $ \psi = \psi_{n,k}$ we obtain
\begin{equation}\label{eq23}
\begin{aligned}
 & \int_0^T \int_{B_R} (u-w)\left[-(a-a_{n,k}+a_{n,k})\,(-\Delta)_R^s \, \psi_{n,k} + \left( \psi_{n,k} \right)_t \right] \dx\dt \\
\le & \, \int_{B_R}[u(x,T)-w(x,T)] \, \chi(x) \, \dx \,.
\end{aligned}
\end{equation}
On the other hand, thanks to \eqref{eq15bf}, from \eqref{eq23} there follows
\begin{equation}\label{eq25}
\begin{aligned}
\int_0^T \int_{B_R} (u-w) \, (a_{n,k}-a) \, (-\Delta)_R^s \, \psi_{n,k} \, \dx \dt \le \int_{B_R}[u(x, T) - w(x, T)]\, \chi(x) \, \dx \, .
\end{aligned}
\end{equation}
In addition, the uniform (w.r.t.~$n,k$) estimate \eqref{eq221f}, \eqref{eq27} and \eqref{bounds} yield
\begin{equation}\label{eq26}
\begin{aligned}
&\int_0^T\int_{B_R}|u-w|\,|a-a_{n,k}|\left|(-\Delta)_R^s \, \psi_{n,k} \right| \dx \dt \\
\leq \, & (\|u \|_\infty+ \|w\|_\infty) \int_0^T \int_{B_R} \frac{|a_{n,k}-a|}{\sqrt{a_{n,k}}} \sqrt{a_{n,k}} \left|(-\Delta)_R^s \, \psi_{n,k} \right|  \dx \dt \\
\leq \, & (\|u \|_\infty+ \|w\|_\infty) \left\|\frac{a-a_{n,k}}{\sqrt{a_{n,k}}} \right\|_{L^2(B_R \times (0,T))} C_R \, .
\end{aligned}
\end{equation}
By collecting \eqref{eq25}--\eqref{eq26}, letting first $n\to \infty$ and then $k\to \infty$ (recalling \eqref{eq27}--\eqref{bounds}), we finally infer that
\[\int_{B_R} \left[u(x, T)- w(x,T)\right] \chi(x) \, \dx \geq 0 \, , \]
whence $ u \ge w $ a.e.~in $ B_R \times \mathbb{R}^+ $ in view of the arbitrariness of $ T $ and $ \chi $.
\end{proof}

\section{Existence and uniqueness: proofs}\label{proofs}

Since it is more natural to our strategy, we will first address the case of nonnegative solutions and $\Phi$ satisfying $ \Phi(0)=0 $  in Subsection \ref{nonneg}, for which we can exploit the tools of Section \ref{mini}, and then consider general solutions and nonlinearities $ \Phi $ in Subsection \ref{sign-change}, upon adapting the previous arguments.

\subsection{Nonnegative bounded solutions}\label{nonneg}

Prior to proving existence of the minimal solution (Proposition \ref{thmesi}), we need to ensure the well-posedness of problem \eqref{eq17f} along with some crucial comparison properties.
\begin{pro}\label{ex-R}
Let $ s \in (0,1) $, $R \ge 1 $, $ u_0 \in L^\infty(B_R) $ with $ u_0 \ge 0 $ and $ \Phi $ satisfy \eqref{hp-phi}  with $\Phi(0)=0$. Then there exists a solution $u_R$ to problem \eqref{eq17f} in the sense of Definition \ref{D1f}. More precisely, for a.e.~$T>0$ there holds
\begin{equation}\label{eq400}
\begin{aligned}
\int_0^T \int_{B_R} u_R \, \psi_t \, \dx \dt = & \, -  \int_0^T\int_{B_R} (-\Delta)^{s/2}_R \, \Phi(u_R) \, (-\Delta)^{s/2}_R \, \psi \, \dx \dt \\
& \, + \int_{B_R} u_R(x,T) \, \psi(x,T)\,\dx - \int_{B_R}u_0(x) \, \psi(x,0) \,\dx
\end{aligned}
\end{equation}
for any $\psi \in C^1\!\left([0, T]; L^2(B_R)\right) \cap L^2\!\left((0, T);H^s_0(B_R)\right)$. Moreover,
\begin{equation}\label{ubb}
\Phi(u_R)\in L^2\!\left((0, T);H^s_0(B_R) \right), \qquad 0\leq u_R \leq \|u_0\|_\infty \, ,
\end{equation}
and the local energy estimate
\begin{equation}\label{eq410}
\int_0^T\int_{\Omega_r}\left|\nabla E_R(\Phi(u_R))\right|^2 y^{1-2s} \dx \dy \dt\leq \frac{2}{\mu_s} \int_{\Omega_{2r}} \Psi(u_0)\, \dx  + C \, \|\Phi(u_0)\|_{\infty}^2\int_{\Omega_{2r}} y^{1-2s}\, \dx\dy
\end{equation}
is valid for all $r\in (1/4, R/2)$, for some $C>0$ independent of $r, R$, where $\Psi(u):=\int_0^u\Phi(v) \, \dv$. If in addition $u_R$ and $w_R$ are solutions to \eqref{eq17f} starting from the ordered initial data $u_0 \le w_0$, respectively, then $u_R\leq w_R$. Finally, for every $ 1 \le R_1 \leq R_2$ there holds
\begin{equation}\label{eqqq}
u_{R_1} \leq u_{R_2} \qquad \text{a.e.~in } B_{R_1}\times \mathbb{R}^+ \, ,
\end{equation}
where by $ u_{R_1} $ and $ u_{R_2} $ we denote \emph{any} two solutions to \eqref{eq17f}, in the sense of Definition \ref{D1f}, corresponding to the same initial datum $ u_0 \in L^\infty(B_{R_2}) $ with $ u_0 \ge 0 $. In particular, the solution to problem \eqref{eq17f} in the sense of Definition \ref{D1f} is unique.
\end{pro}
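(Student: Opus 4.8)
The plan is to solve a nondegenerate regularisation of \eqref{eq17f}, pass to the limit, and then derive the two comparison statements, uniqueness being a consequence of \eqref{eqqq}. Since the two-sided bound $0\le u_R\le\|u_0\|_\infty$ will be produced along the way, we may modify $\Phi$ outside $[0,\|u_0\|_\infty]$ and hence assume, without loss of generality, that $\Phi$ is globally Lipschitz, nondecreasing, with $\Phi(0)=0$. Set $\Phi_\varepsilon:=\Phi+\varepsilon\,\mathrm{Id}$; this is strictly increasing and bi-Lipschitz, and writing $v=\Phi_\varepsilon(u)$ the regularised problem becomes $\partial_t\,\Phi_\varepsilon^{-1}(v)+(-\Delta)^s_R v=0$ in $B_R\times(0,T)$, $v=0$ on $\partial B_R$, $v(0)=\Phi_\varepsilon(u_0)\in L^2(B_R)$. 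Since $v\mapsto(-\Delta)^s_R v$ is linear, continuous and coercive from $H^s_0(B_R)$ into its dual (hence maximal monotone) and $\Phi_\varepsilon^{-1}$ is bi-Lipschitz, this equation has a unique solution, with $u_\varepsilon\in C([0,T];L^2(B_R))$, $\Phi_\varepsilon(u_\varepsilon)\in L^2((0,T);H^s_0(B_R))$ and $\partial_t u_\varepsilon=-(-\Delta)^s_R\Phi_\varepsilon(u_\varepsilon)\in L^2((0,T);(H^s_0(B_R))')$; one may argue via a Galerkin scheme in the eigenbasis $\{\phi_k\}$ of $(-\Delta)_R$ together with the estimates below, alternatively reconciling this with the solution concept of \cite[Definition~3.1]{BFV}. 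The regularised solution map is order-preserving by the comparison principle for this nondegenerate equation; comparing with the constant sub- and supersolutions $0$ and $\|u_0\|_\infty$ — the latter being admissible because $(-\Delta)^s_R$ of a positive constant is $\ge0$ — gives $0\le u_\varepsilon\le\|u_0\|_\infty$, while testing the equation with $\Phi_\varepsilon(u_\varepsilon)$ yields the energy identity
\[
\int_{B_R}\Psi_\varepsilon(u_\varepsilon(\cdot,T))\,\dx+\int_0^T\left\|\Phi_\varepsilon(u_\varepsilon)\right\|_{H^s_0(B_R)}^2\dt=\int_{B_R}\Psi_\varepsilon(u_0)\,\dx ,
\]
with $\Psi_\varepsilon(u):=\int_0^u\Phi_\varepsilon(r)\,\mathrm{d}r$, whence $\Phi_\varepsilon(u_\varepsilon)$ and $\partial_t u_\varepsilon$ are bounded in $L^2((0,T);H^s_0(B_R))$ and $L^2((0,T);(H^s_0(B_R))')$ respectively, uniformly in $\varepsilon$.

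By Aubin--Lions, along a subsequence $u_\varepsilon\to u_R$ strongly in $L^2(B_R\times(0,T))$ and a.e., while $\Phi_\varepsilon(u_\varepsilon)\rightharpoonup\chi$ weakly in $L^2((0,T);H^s_0(B_R))$; the a.e.\ convergence of $u_\varepsilon$ together with $\Phi_\varepsilon\to\Phi$ locally uniformly forces $\chi=\Phi(u_R)$, so letting $\varepsilon\downarrow0$ in the weak formulation shows that $u_R$ satisfies \eqref{eq400} (equivalently \eqref{eq18f}, using \eqref{eq401} and a density argument since $\Phi(u_R)\in H^s_0(B_R)$), with the regularity and bounds \eqref{ubb} inherited from the approximation. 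For the local energy estimate \eqref{eq410} I would test the stationary identity $\dive(y^{1-2s}\nabla E_R(\Phi(u_R)))=0$ on $\mathcal C_R$ against $E_R(\Phi(u_R))\,\zeta^2$, with $\zeta$ a cutoff equal to $1$ on $\Omega_r$ and supported in $\Omega_{2r}$, so that the lateral boundary term drops out: the $\{y=0\}$ contribution equals $-\tfrac1{\mu_s}\tfrac{\mathrm{d}}{\mathrm{d}t}\int_{B_{2r}}\Psi(u_R)\,\zeta^2\,\dx$ by the parabolic equation and the Dirichlet-to-Neumann identity \eqref{eq2}, the gradient cross term is absorbed by Young's inequality at the cost of $\|\Phi(u_0)\|_\infty^2\int_{\Omega_{2r}}y^{1-2s}|\nabla\zeta|^2$ (using $0\le E_R(\Phi(u_R))\le\|\Phi(u_0)\|_\infty$), and a time integration together with $|\nabla\zeta|\lesssim 1/r\le4$ — here the restriction $r>1/4$ is precisely what makes the crude bound $\int_{\Omega_{2r}}y^{1-2s}|\nabla\zeta|^2\lesssim\int_{\Omega_{2r}}y^{1-2s}\dx\dy$ usable — yields \eqref{eq410} with a constant independent of $r$ and $R$; alternatively one derives the same estimate for $u_\varepsilon$ uniformly and passes to the limit.

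Monotone dependence on the datum passes to the limit from the nondegenerate level: for the regularised problem $u_0\le w_0$ implies $u_\varepsilon\le w_\varepsilon$ by the comparison principle (equivalently, test the difference of the two $v_\varepsilon$-equations with a smooth approximation of $\mathrm{sign}_+$ of the difference, using that the Dirichlet form of $(-\Delta)^s_R$ is a Dirichlet form), and the inequality survives as $\varepsilon\downarrow0$; hence $u_R\le w_R$.

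It remains to prove \eqref{eqqq}, from which uniqueness follows. For $1\le R_1\le R_2$ I would first show — \emph{in exact analogy with Proposition \ref{ineq}} — that any nonnegative solution $u_{R_2}$ to \eqref{eq17f} in $B_{R_2}$ satisfies the supersolution inequality \eqref{eq18f2} relative to $B_{R_1}$ and $(-\Delta)^s_{R_1}$: the argument of Proposition \ref{ineq} goes through with the $2s$-harmonic extension on $\Omega$ replaced by the spectral extension $E_{R_2}(\Phi(u_{R_2}))$ on $\mathcal C_{R_2}$, the analogues of Lemmas \ref{lem6} and \ref{lem7} holding a fortiori because of the exponential decay \eqref{stima-L2}--\eqref{stima-Hs}, and the decisive point being that the extra boundary integral over $\partial\mathcal C_{R_1}\cap\{y>0\}$ has the right sign: $E_{R_2}(\Phi(u_{R_2}))\ge0$ on all of $\mathcal C_{R_2}$ (because $\Phi(u_{R_2})\ge0$) while the conormal derivative of the nonnegative, laterally vanishing $E_{R_1}(\psi)$ is $\le0$. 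Once this supersolution property is available, the duality method of Proposition \ref{comppr} (built on Proposition \ref{dual}) applies verbatim, with $w\equiv u_{R_1}$ the exact $B_{R_1}$-solution and $u\equiv u_{R_2}$ restricted to $B_{R_1}$, giving $u_{R_1}\le u_{R_2}$ a.e.\ in $B_{R_1}\times\mathbb R^+$. Taking $R_1=R_2=R$ and two arbitrary solutions of \eqref{eq17f} with the same datum, each of them satisfies \eqref{eq18f} (hence a fortiori \eqref{eq18f2}) with no preliminary lemma needed, so the same duality argument yields inequality in both directions and therefore uniqueness. I expect the genuinely delicate step of the whole proposition to be precisely this last part — re-establishing the extension identities and energy estimates of Lemmas \ref{lem6}--\ref{lem7} and pinning down the sign of the lateral boundary term in the nested geometry $\mathcal C_{R_1}\subset\mathcal C_{R_2}$, i.e.\ the two-ball counterpart of the $(\mathbb R^d,B_R)$ analysis underlying Proposition \ref{ineq}.
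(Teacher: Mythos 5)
Your overall architecture coincides with the paper's: construct an energy solution by approximation, obtain \eqref{eq400}--\eqref{ubb} from uniform energy estimates, prove \eqref{eq410} by testing the extension problem against $E_R(\Phi(u_R))$ times a cut-off (the paper uses a cut-off $\eta_r$ with $|\nabla\eta_r|^2\le C\eta_r$ instead of your $\zeta^2$, which is the same device), and — most importantly — prove \eqref{eqqq} by first establishing the supersolution inequality \eqref{eq402} for $u_{R_2}$ relative to $(-\Delta)^s_{R_1}$ via a nested-extension identity (the paper's \eqref{eq404}, your two-ball analogue of Lemma \ref{lem6}) and then running the duality method of Proposition \ref{comppr}; your identification of the sign of the lateral boundary term on $\partial\mathcal C_{R_1}\cap\{y>0\}$ is exactly the paper's mechanism. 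The only genuinely different ingredient is the existence scheme: the paper invokes the Crandall--Liggett theorem (implicit time discretization of the resolvent equation, plus a further regularization of $\Phi$), whereas you regularize the nonlinearity by $\Phi_\varepsilon=\Phi+\varepsilon\,\mathrm{Id}$ and solve the nondegenerate problem by Galerkin/maximal-monotone methods. Both are standard and both appear in the references the paper cites; your route has the advantage of making the chain rule $\langle \partial_t u_\varepsilon,\Phi_\varepsilon(u_\varepsilon)\rangle=\tfrac{{\rm d}}{\dt}\int\Psi_\varepsilon(u_\varepsilon)$ rigorous at the $\varepsilon$-level, which is precisely why the paper instead has to justify \eqref{eq410} through discrete-in-time estimates.

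One step of your compactness argument is not correct as stated. Aubin--Lions cannot be applied to $u_\varepsilon$ itself: the uniform bound in $L^2((0,T);H^s_0(B_R))$ is on $\Phi_\varepsilon(u_\varepsilon)$, not on $u_\varepsilon$, and when $\Phi$ is degenerate (e.g.\ the Stefan nonlinearity $\Phi(u)=(u-1)_+$, explicitly allowed by \eqref{hp-phi}) no uniform spatial regularity of $u_\varepsilon$ is available, so you cannot conclude that $u_\varepsilon\to u_R$ strongly in $L^2$ and a.e., which is what your identification of the weak limit $\chi$ with $\Phi(u_R)$ rests on. The standard repair is: (i) derive the time-translation estimate
\begin{equation*}
\int_0^{T-h}\int_{B_R}\left[u_\varepsilon(t+h)-u_\varepsilon(t)\right]\left[\Phi_\varepsilon(u_\varepsilon)(t+h)-\Phi_\varepsilon(u_\varepsilon)(t)\right]\dx\dt\le C\,h \, ,
\end{equation*}
which together with the uniform Lipschitz bound on $\Phi_\varepsilon$ and the spatial $H^s_0$ bound gives \emph{strong} $L^2$ compactness of $\Phi_\varepsilon(u_\varepsilon)$ (not of $u_\varepsilon$); (ii) identify $\chi=\Phi(u_R)$, where $u_R$ is only the weak-$*$ limit of $u_\varepsilon$, via Minty's monotonicity trick. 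With this modification the passage to the limit in the weak formulation, and hence \eqref{eq400}, goes through; the remainder of your argument, including the monotone dependence on the data and the proof of \eqref{eqqq} and uniqueness, is sound and matches the paper's.
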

\begin{proof}
The most used technique in the literature to construct energy solutions to \eqref{eq17f}, i.e.~solutions satisfying \eqref{eq400} and \eqref{ubb}, or to similar problems, relies on the celebrated Crandall-Liggett Theorem, which goes back to \cite{CL}. The basic strategy (see \cite[Chapter 10]{Vbook} in the local case) consists in first solving the discretized ``resolvent'' equation
\begin{equation}\label{resolvent}
\frac{u_{n+1}-u_n}{\tau} = -\left( -\Delta \right)^s \Phi(u_{n+1})  \qquad \text{in } B_R \, , \quad \forall n \in \mathbb{N} \, ,
\end{equation}
where $ \tau>0 $ is a fixed time step, and then suitably letting $\tau \downarrow 0$, thus obtaining a solution to \eqref{eq17f} as a limit of the piecewise-constant interpolants (in time) of the sequences $ \{ u_n \}_{n \in \mathbb{N}} $. Let us point out that, in order to solve \eqref{resolvent} at each fixed $ \tau>0 $, one may further approximate $ \Phi $ with a sequence $ \{ \Phi_k \}_{k \in \mathbb{N}} $ of regular, strictly increasing and non-degenerate nonlinearities. The order-preserving property $ u_R \le w_R $ is also a consequence of such a construction. Since the procedure is by now rather standard, we will not give further details: we refer to \cite[Theorem 7.2]{Vaz12} for the porous-medium case ($ \Phi(u)=u^m $ with $ m>1 $) and to  \cite[Theorem 3.7]{AMS} for diffusion-type equations governed by a wide class of operators in abstract frameworks.

Once we have at our disposal an energy solution $u_R$, the validity of \eqref{eq18f} is a simple consequence of \eqref{eq400} along with \eqref{eq401}. Besides, the local energy estimate \eqref{eq410} can formally be proved by minor variations to the proof of Lemma \ref{lem7} (see Appendix \ref{App3}): the idea is to test the ``extended'' version of \eqref{eq400} with the function $ E_R\!\left(\Phi(u_R)\right)\! \eta_r $, where the cut-off $\eta_r$ is defined as in \eqref{cutoff-k} with the additional constraint $| \nabla \eta_r |^2 \le C \eta_r $. However, since a priori $ \left( \Phi(u_R) \right)_t $ may not make sense, in order to establish it rigorously one can obtain analogous discrete estimates on \eqref{resolvent}, sum up in $n$ and then let $ \tau \downarrow 0 $.

To conclude the proof, we need to show \eqref{eqqq}. We claim that
\begin{equation}\label{eq402}
\begin{aligned}
\int_0^T \int_{B_{R_1}} u_{R_2} \, \psi_t \, \dx \dt \le & \, \int_0^T \int_{B_{R_1}} \Phi(u_{R_2}) \, (-\Delta)^s_{R_1} \psi \,\dx \dt \\
& + \int_{B_{R_1}} u_{R_2}(x,T) \, \psi(x,T)\,\dx-\int_{B_{R_1}}u_0(x) \, \psi(x,0)\,\dx
\end{aligned}
\end{equation}
for any nonnegative $\psi \in C^1\!\left([0, T]; L^2(B_{R_1})\right)\cap L^2\!\left((0, T);\operatorname{Dom}(-\Delta)_{R_1}^s \right)$. Indeed, the identity
\begin{equation}\label{eq404}
\int_0^T \int_{B_{R_2}} \Phi(u_{R_2}) \,(-\Delta)_{R_2}^s\,\varphi \, \dx \dt  = \mu_s \int_0^T\int_{\mathcal C_{R_1}} \left \langle \nabla E_{R_2}(\Phi(u_{R_2})) , \nabla E_{R_1}(\varphi) \right \rangle y^{1-2s} \dx \dy \dt
\end{equation}
holds for every $ \varphi \in C^\infty_c(B_{R_1}\times [0, T])$. Hence \eqref{eq402} follows similarly to the proof of Proposition \ref{ineq}, upon replacing the integral version of \eqref{eq403} with \eqref{eq404}. Note that here we need not use Steklov averages, since since both $u_{R_1}$ and $u_{R_2}$ are energy solutions. Thanks to \eqref{eq402}, by arguing as in the proof of Proposition \ref{comppr}, replacing $u$ and $w$ with $u_{R_2}$ and $u_{R_1}$, respectively, we infer \eqref{eqqq}. We finally mention that uniqueness follows from \eqref{eqqq} with $ R_1=R_2=R $.
\end{proof}

\begin{proof}[Proof of Proposition \ref{thmesi}] The proof essentially relies on the existence and the comparison principles established above. Indeed, let $k \in \mathbb{N}$, with $k\geq 1$, and put $u_{0k}:=u_0\chi_{B_k}$. It is apparent that $u_{0k}\in L^1(\mathbb R^d)\cap L^{\infty}(\mathbb R^d)$.
First one solves problem \eqref{eq17f} on $B_R$ (let $ R\ge1 $), which does have a unique (energy) solution $u_{k, R}$ thanks to Proposition \ref{ex-R}. Then, still in view of Proposition \ref{ex-R}, the family of solutions $\{u_{k, R}\}_{R \ge 1}$ is monotone increasing with respect to $ R $ (recall \eqref{eqqq}), uniformly bounded by virtue of \eqref{ubb} and for any $1\leq k_1\leq k_2$, $R \ge 1$, there holds $u_{k_1, R}\leq u_{k_2, R}$ a.e.~in $B_R \times \mathbb{R}^+ $ (consequence of the first comparison property). Hence, for any fixed $k\geq 1$, we can assert that there exists $u_k:=\lim_{R\to\infty} u_{k,R}$ and it satisfies $0\leq u_k\leq \|u_0\|_\infty$ in $\mathbb R^d\times
\mathbb{R}^+$. Moreover, $u_{k_1}\leq u_{k_2}$ a.e.~in $ \mathbb{R}^d \times \mathbb{R}^+$. Since $u_{0k}\in L^1(\mathbb R^d)\cap L^{\infty}(\mathbb R^d)$, by arguing as in \cite[Section 7.2]{Vaz12} it is not difficult to show that $u_k$ is a weak solution to problem \eqref{eq6} with initial datum $ u_{0k} $, in the sense that
\begin{equation}\label{eq406}
\begin{aligned}
\int_0^T\int_{\mathbb{R}^d} u_k \, \varphi_t
\,\dx\dt = & \int_0^T \int_{\mathbb{R}^d}(-\Delta)^{{s}/{2}} \Phi(u_k) \, (-\Delta)^{{s}/{2}}\varphi\,\dx\dt\\
&+\int_{\mathbb{R}^d} u_k(x,T) \, \varphi(x,T)\,\dx-\int_{\mathbb{R}^d} u_{0k}(x)\,\varphi(x,0)\,\dx
\end{aligned}
\end{equation}
for every $\varphi\in C_c^{\infty}(\mathbb{R}^d\times[0,T])$. Since the sequence $\{u_k\}_{k\geq 1}$ is monotone increasing w.r.t.~$k\in \mathbb N$ and uniformly bounded, there exists $\underline u:=\lim_{k\to \infty} u_k $ and it satisfies $0\leq \underline{u} \leq \|u_0\|_\infty$. Moreover, integrating by parts and then passing to the limit as $k\to \infty$ in \eqref{eq406} we easily obtain  \eqref{eq7} (with $ u=\underline{u} $). The fact that $\underline u$ is indeed minimal follows straightly from Proposition \ref{comppr}.
\end{proof}

We will now prove Theorem \ref{teouni} in the case of nonnegative solutions, upon exploiting the existence of the minimal one. In what follows, we take for granted the cut-off functions $ \gamma_R $ defined in \eqref{eq:cutoff}, since they will be used several times.

\begin{proof}[Proof of Theorem \ref{teouni} (nonnegative solutions and $\Phi(0)=0$)]
In addition to the hypotheses of Theorem \ref{thmesi-sign-ch}, we also assume that $ u_0 \ge 0 $, $\Phi(0)=0$ and that $ u $ is any \emph{nonnegative} solution. Proposition \ref{thmesi} guarantees the existence of the minimal solution $ \underline{u} $, so that $ u $ is necessarily larger than $ \underline{u} $. To our purposes, let us consider a regular and radially-decreasing function $ h(x) \equiv h(|x|) $ such that
\begin{equation}\label{eq-h}
\frac{c_1}{1+|x|^{\alpha}} \le h(x) \le \frac{c_2}{1+|x|^\alpha} \qquad \text{and} \qquad \left| \nabla^2 h(x) \right| \le \frac{c_2}{1+|x|^{\alpha+2}} \qquad \forall x \in \mathbb{R}^d \, ,
\end{equation}
for some $ \alpha \in (d,d+2s) $ and positive constants $c_1,c_2$. Under assumption \eqref{eq-h} one can prove (see e.g.~\cite[Lemma 2.1]{BVadv}) that $ (-\Delta)^s h(x) $ is also a regular function satisfying
\begin{equation}\label{eq-h-2}
\left| (-\Delta)^s h(x) \right| \le \frac{c^\prime}{1+|x|^{d+2s}} \qquad \forall x \in \mathbb{R}^d \, ,
\end{equation}
for a suitable positive constant $ c^\prime $. By formula \eqref{def-frac}, it is easy to check that
\begin{equation*}\label{lap-prod}
(-\Delta)^s\!\left(h\gamma_R\right) = h \, (-\Delta)^s\gamma_R + (-\Delta)^s h \, \gamma_R + 2 \mathcal Q(h, \gamma_R)\, ,
\end{equation*}
where the quadratic form $ \mathcal{Q} $ is defined as
$$
\mathcal Q(h, \gamma_R)(x):=c_{d,s}\int_{\mathbb R^d}\frac{[h(x)-h(y)][\gamma_R(x)-\gamma_R(y)]}{|x-y|^{d+2s}}\, \dy \qquad \forall x\in \mathbb R^d\,.
$$
Hence, from Definition \ref{defsol2} with the choice $ \varphi(x,t) \equiv \varphi(x)=h(x)\gamma_R(x)$, we infer that for a.e.~$ T>0 $ there holds
\begin{equation}\label{eq39}
\begin{aligned}
& \int_0^T\int_{\mathbb R^d}\left[\Phi(u(x,t))-\Phi(\underline u(x,t))\right] (-\Delta)^s h(x) \, \gamma_R(x) \, \dx \dt \\
& +\int_{\mathbb R^d}\left[u(x, T)-\underline u(x, T)\right] h(x) \, \gamma_R(x) \, \dx\\
= & -\int_0^T\int_{\mathbb R^d}\left[ \Phi(u(x,t))-\Phi(\underline u(x,t))\right] \left[h(x)\,(-\Delta)^s\gamma_R(x) + 2\mathcal Q(h, \gamma_R)(x)\right] \dx \dt =: \mathcal I(R) \, .
\end{aligned}
\end{equation}
We aim at showing that
\begin{equation}\label{eq41}
\lim_{R\to\infty}\mathcal I(R)=0\,.
\end{equation}
Indeed, as recalled e.g.~in \cite[Lemmas 3.2, 3.3]{Mur}, the $s$-fractional Laplacian of cut-off functions satisfies
\begin{equation}\label{eq261-bis}
\left|(-\Delta)^s \gamma_R(x) \right| \leq \frac{C}{R^{2s}} \, \frac{1}{1+\left(\frac{|x|}{R} \right)^{d+2s}} \qquad \forall x\in \mathbb R^d \, ,
\end{equation}
where from here on $C$ is a generic positive constant independent of $R \ge 1$. Clearly \eqref{eq261-bis} is enough to deduce that
\begin{equation}\label{eee}
\lim_{R \to \infty} \int_0^T\int_{\mathbb R^d}\left[ \Phi(u(x,t))-\Phi(\underline u(x,t)) \right] h(x)\,(-\Delta)^s\gamma_R(x) \, \dx \dt = 0 \, ,
\end{equation}
given the boundedness of $ u $ and $ \underline{u} $ along with the fact that $ h \in L^1(\mathbb{R}^d) $. The second term in the r.h.s.~of \eqref{eq39} must be handled more carefully. First of all, for any $p\in (1, \infty)$ (and a regular enough function $f$) we introduce the nonlinear operator
\[
\mathcal T_p(f)(x):=\int_{\mathbb R^d}\frac{|f(x)-f(y)|^p}{|x-y|^{d+ps}} \, \dy \qquad \forall x \in \mathbb{R}^d \, .
\]
Thanks to \cite[Lemmas 3.2, 3.3]{Mur}, an estimate analogous to \eqref{eq261-bis} holds:
\begin{equation}\label{eq310}
\mathcal T_p(\gamma_R)(x) \leq \frac{C}{R^{ps}} \, \frac{1}{1+\left(\frac{|x|}{R} \right)^{d+ps}} \qquad \forall x \in \mathbb R^d\,.
\end{equation}
On the other hand, by reasoning similarly to the proof of \cite[Lemma 5.2]{Mur}, it is not difficult to show that
\begin{equation}\label{eq310-bis}
\mathcal T_p(h)(x) \leq \frac{C}{1+|x|^{d+ps}} \qquad \forall x \in \mathbb R^d\,.
\end{equation}
By combining \eqref{eq310}--\eqref{eq310-bis} and H\"older's inequality, we obtain (let $ 1/p + 1/p^\prime = 1 $)
\begin{equation}\label{eq311}
\left|\mathcal Q(h, \gamma_R)(x) \right| \leq c_{d,s} \, \mathcal T_p^{\frac 1 p}\!(h)(x) \, \mathcal T_{p'}^{\frac 1{p'}}\!(\gamma_R)(x) \leq \frac{C}{R^s} \, \frac1{\left(1+|x|^{\frac{d}{p}+s}\right)\left[1+\left(\frac{|x|}{R} \right)^{\frac{d}{p'}+s}\right]} \qquad \forall x\in \mathbb R^d\,.
\end{equation}
As a consequence of estimate \eqref{eq311}, an elementary computation yields
\begin{equation}\label{eq311-bis}
\int_{\mathbb{R}^d} \left|\mathcal Q(h, \gamma_R)(x) \right| \mathrm{d}x \leq \frac{C}{R^{2s-\frac{d}{p^\prime}}} \, ,
\end{equation}
up to a logarithmic correction in the critical case $ d=sp^\prime $. By virtue of \eqref{eq311-bis}, if we choose $ p $ so small that $ 2s>d/p^\prime $ then upon letting $ R \to \infty $ we deduce that
\begin{equation*}\label{QR}
\lim_{R \to \infty} \int_0^T\int_{\mathbb R^d}\left[ \Phi(u(x,t))-\Phi(\underline u(x,t)) \right] \left| Q(h, \gamma_R)(x) \right| \dx \dt = 0 \, ,
\end{equation*}
thanks again to the boundedness of $ u $ and $ \underline{u} $. This, together with \eqref{eee}, implies that \eqref{eq41} does hold, so that by passing to the limit in \eqref{eq39} as $ R \to \infty $ (exploiting also the fact that $ h , (-\Delta)^s h \in L^1(\mathbb{R}^d) $) we end up with the identity
\begin{equation}\label{diff-eq-1}
\int_{\mathbb R^d}\left[u(x, T)-\underline u(x, T)\right] h(x) \, \dx =
\int_0^T\int_{\mathbb R^d}\left[\Phi(\underline{u}(x,t))-\Phi(u(x,t))\right] (-\Delta)^s h(x) \, \dx \dt \, ,
\end{equation}
valid for a.e.~$ T> 0 $. Since $\Phi$ is locally Lipschitz, $ u $ and $ \underline{u} $ are bounded, estimates \eqref{eq-h}--\eqref{eq-h-2} hold and $ \alpha < d+2s $, from \eqref{diff-eq-1} there follows
\begin{equation}\label{diff-eq-2}
\int_{\mathbb R^d}\left[u(x, T)-\underline u(x, T)\right] h(x) \, \dx \le
C \int_0^T\int_{\mathbb R^d}\left[u(x, t)-\underline u(x, t)\right] h(x) \, \dx \dt \qquad \text{for a.e. } T>0 \, ,
\end{equation}
whence
\begin{equation*}\label{diff-eq-3}
\int_0^T\int_{\mathbb R^d}\left[u(x, t)-\underline u(x, t)\right] h(x) \, \dx \dt = 0 \qquad \forall T>0
\end{equation*}
by Gronwall's Lemma. This clearly implies $ u=\underline{u} $ because $ u \ge \underline{u} $ and $h$ is strictly positive.
\end{proof}

\subsection{General bounded solutions}\label{sign-change}

Our purpose is now to deal with initial data (and solutions) that may change sign and with nonlinearities $\Phi$ that are not forced to comply with $ \Phi(0)=0 $. To this end, the idea is to resort to the theory of nonnegative solutions that we have developed above, by means of the following simple observation: if $ u $ is a solution to \eqref{eq6}, then for any $ c \in \mathbb{R} $ the function $ \hat{u} := u-c $ is a solution to the same problem with initial datum $ \hat{u}_0 := u_0 -c $ and a slightly different nonlinearity.

\begin{pro}\label{translation}
Let $ s \in (0,1) $, $ u_0 \in L^\infty(\mathbb{R}^d) $ and $ \Phi $ satisfy \eqref{hp-phi}. Let $ c \in \mathbb{R} $. Then $u$ is a solution to problem \eqref{eq6}, according to Definition \ref{defsol2}, if and only if the function $ \hat{u} := u - c $ is a solution to the same problem with $u_0$ replaced by the initial datum $ \hat{u}_0 := u_0 - c $ and $ {\Phi} $ replaced by the nonlinearity
$$
\hat{\Phi}(v) := \Phi(v+c)-\Phi(c)  \qquad \forall v \in \mathbb{R} \, ,
$$
which still complies with \eqref{hp-phi} and in addition satisfies $ \hat{\Phi}(0)=0 $.
\end{pro}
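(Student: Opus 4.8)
## Proof proposal

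The plan is to verify directly that the very weak formulation \eqref{eq7} for $u$ with nonlinearity $\Phi$ and initial datum $u_0$ is, after the substitution $\hat u = u-c$, exactly the very weak formulation for $\hat u$ with nonlinearity $\hat\Phi$ and initial datum $\hat u_0$. Since the statement is an ``if and only if'' and the two transformations $u\mapsto u-c$ and $\hat u\mapsto \hat u+c$ are inverse to each other (replacing $c$ by $-c$ and $\hat\Phi$ by its analogue recovers $\Phi$), it suffices to prove one implication; the converse follows by symmetry.

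First I would record the elementary facts about $\hat\Phi$. If $\Phi$ is nondecreasing and locally Lipschitz, then so is $v\mapsto\Phi(v+c)$, and subtracting the constant $\Phi(c)$ changes neither property; moreover $\hat\Phi(0)=\Phi(c)-\Phi(c)=0$. Also $\hat u = u-c\in L^\infty(\mathbb R^d\times\mathbb R^+)$ since $u$ is, and $\hat u_0 = u_0-c\in L^\infty(\mathbb R^d)$. So all the structural hypotheses of Definition \ref{defsol2} transfer.

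The core computation is to insert $u=\hat u+c$ and $\Phi(u)=\hat\Phi(\hat u)+\Phi(c)$ into \eqref{eq7} and collect the constant terms. On the left-hand side, $\int_0^T\int_{\mathbb R^d} u\,\varphi_t\,\dx\dt = \int_0^T\int_{\mathbb R^d}\hat u\,\varphi_t\,\dx\dt + c\int_0^T\int_{\mathbb R^d}\varphi_t\,\dx\dt$, and the last integral equals $\int_{\mathbb R^d}\big(\varphi(x,T)-\varphi(x,0)\big)\dx$ by the fundamental theorem of calculus in $t$ (valid since $\varphi\in C_c^\infty(\mathbb R^d\times[0,T])$). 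On the right-hand side, the term $\int_0^T\int_{\mathbb R^d}\Phi(u)(-\Delta)^s\varphi\,\dx\dt$ splits as $\int_0^T\int_{\mathbb R^d}\hat\Phi(\hat u)(-\Delta)^s\varphi\,\dx\dt + \Phi(c)\int_0^T\int_{\mathbb R^d}(-\Delta)^s\varphi\,\dx\dt$, and here one uses that $\int_{\mathbb R^d}(-\Delta)^s\varphi(\cdot,t)\,\dx = 0$ for every $t$ — a standard fact for $\varphi(\cdot,t)\in C_c^\infty(\mathbb R^d)$, which follows from \eqref{def-frac} by antisymmetrizing the kernel, or from the Fourier-side identity $\widehat{(-\Delta)^s\varphi}(0)=0$. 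Finally the boundary terms give $\int_{\mathbb R^d}u(x,T)\varphi(x,T)\dx = \int_{\mathbb R^d}\hat u(x,T)\varphi(x,T)\dx + c\int_{\mathbb R^d}\varphi(x,T)\dx$ and similarly at $t=0$. Adding everything up, all the $c$-multiples of $\int\varphi(x,T)\dx$ and $\int\varphi(x,0)\dx$ cancel against the contribution of $c\int_0^T\int\varphi_t$, and the $\Phi(c)$-term disappears because the fractional Laplacian integrates to zero; what remains is precisely \eqref{eq7} for $\hat u$, $\hat\Phi$, $\hat u_0$.

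There is no real obstacle here: the only points requiring a word of justification are $\int_{\mathbb R^d}(-\Delta)^s\varphi\,\dx=0$ for compactly supported smooth $\varphi$ and the bookkeeping of the constant terms, both routine. The mild subtlety worth noting explicitly is that the very weak formulation is required only for a.e.~$T>0$, so one checks the identity for those admissible $T$; since the set of admissible $T$ is the same for $u$ and $\hat u$ (the transformation does not touch the time variable), this causes no difficulty. I would therefore present the argument as a short direct computation, invoking the vanishing-integral property of $(-\Delta)^s$ on test functions as the one external fact used.
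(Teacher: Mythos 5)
Your proposal is correct and follows essentially the same route as the paper: substitute $u=\hat u+c$, absorb the constant $c$ on the left via integration in time, and kill the constant $\Phi(c)$ on the right using $\int_{\mathbb{R}^d}(-\Delta)^s\varphi\,\dx=0$ for $\varphi\in C^\infty_c(\mathbb{R}^d)$. The only (immaterial) difference is how that last identity is justified — you invoke the symmetrized kernel or the Fourier side, while the paper uses a cut-off argument based on the decay estimate \eqref{eq261-bis}.
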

\begin{proof}
It is just a matter of replacing $ u $ with $ \hat{u}+c $ in the very weak formulation \eqref{eq7} and integrating in time the left-hand side. The only nontrivial point consists in showing that the constant $ \Phi(c) $ can be added to the first integral term in the r.h.s.~of \eqref{eq7}. Nevertheless, this is a simple consequence of the identity
$$
\int_{\mathbb{R}^d} (-\Delta)^s \varphi(x) \, \mathrm{d}x = 0 \qquad \forall \varphi \in C^\infty_c(\mathbb{R}^d) \, ,
$$
which can be shown e.g.~by means of a standard cut-off argument starting from (we recall that $ \gamma_R $ is defined in \eqref{eq:cutoff})
$$
\int_{\mathbb{R}^d} \gamma_R(x) \, (-\Delta)^s \varphi(x) \, \mathrm{d}x = \int_{\mathbb{R}^d} (-\Delta)^s\gamma_R(x) \, \varphi(x) \, \mathrm{d}x \qquad \forall \varphi \in C^\infty_c(\mathbb{R}^d)
$$
and letting $ R \to \infty $, upon exploiting \eqref{eq261-bis}.
\end{proof}

\begin{proof}[Proof of Theorem \ref{thmesi-sign-ch}]
Let $ c := \operatorname{ess}\inf_{x \in \mathbb{R}^d} u_0(x) $ and set $ \hat{u}_0 = u_0 - c  \ge 0 $. By virtue of Proposition \ref{thmesi} we know that problem \eqref{eq6}  with $\Phi$ replaced by $\hat\Phi$ admits a (minimal) solution $ \hat{u} $ with initial datum $ \hat{u}_0 $. On the other hand, Proposition \ref{translation} ensures that $ u=\hat{u}+c $ is a solution to the original problem \eqref{eq6} with initial datum $u_0$.
\end{proof}

\begin{proof}[Proof of Theorem \ref{teouni} (sign-changing solutions and general $\Phi$)]
Let $ u_1 $ and $ u_2 $ be two possibly different solutions to \eqref{eq6}, starting from the same initial datum $ u_0 \in L^\infty(\mathbb{R}^d) $. Put
$$
c := \left( \underset{(x,t) \in \mathbb{R}^d \times \mathbb{R}^+ }{\operatorname{ess}\inf} \, u_1(x,t) \right) \wedge \left( \underset{(x,t) \in \mathbb{R}^d \times \mathbb{R}^+ }{\operatorname{ess}\inf} \, u_2(x,t) \right) ,
$$
which is finite since both $ u_1 $ and $ u_2 $ are bounded by assumption. If we set $ \hat{u}_1 := u_1 - c $ and $ \hat{u}_2 := u_2 - c $, still by Proposition \ref{translation} and by the definition of $c$ it is apparent that both $ \hat{u}_1 $ and $ \hat{u}_2 $ are nonnegative, bounded solutions to \eqref{eq6} with $\Phi$ replaced by $\hat\Phi$ and  initial datum $ \hat{u}_0 = u_0-c \ge 0 $. Hence, as a consequence of the first part of the proof of Theorem \ref{teouni} in the case nonnegative solutions, we can deduce that $ \hat{u}_1 = \hat{u}_2 $ and therefore $ u_1 = u_2 $.
\end{proof}

\begin{rem}\label{oss2}\rm
Due to Proposition \ref{ex-R}, for any $k\geq 1$ and $R\ge1$ the solution $u_{k, R}$ constructed in the proof of Proposition \ref{thmesi} satisfies \eqref{eq410}. Since such estimate is purely local, by passing to the limit first as $R\to \infty$ and then as $k\to \infty$, we infer that
\begin{equation}\label{eq407}
\nabla E(\Phi(\underline{u}))\in L^2_{\rm{loc}}\!\left(\overline{\Omega} \times [0,\infty) ; y^{1-2s} \mathrm{d}x\mathrm{d}y\mathrm{d}t \right) .
\end{equation}
Hence, in view of Theorem \ref{teouni}, it follows that \emph{any} nonnegative very weak solution to \eqref{eq6} is in fact a local weak (energy) solution, in the sense that \eqref{eq407} holds. Recalling the proof of Theorem \ref{thmesi-sign-ch} along with the fact that the (global) extension operator $ E $ is invariant with respect to addition of constants, the same is true for sign-changing solutions. Note that property \eqref{eq407}, in particular, implies that the function $ \Phi(\underline{u}) $ belongs to the space $  \dot{H}^s_{\mathrm{loc}}(\mathbb{R}^d) $, in the sense that
$$
\gamma_R \, \Phi(\underline{u}) \in L^2_{\mathrm{loc}}([0,\infty);\dot{H}^s(\mathbb{R}^d))
$$
for any cut-off function $ \gamma_R $ as in \eqref{eq:cutoff}, since $\gamma_R \, E (\Phi(\underline{u}))$ is a finite-energy extension of $\gamma_R \, \Phi(\underline{u})$. Again, the same holds for sign-changing solutions.
\end{rem}

\subsection{Final comments, open problems and generalizations}

We conclude our discussion by listing some technical points which are important for our methods of proof to work, along with some related open questions and possible generalizations.

\begin{enumerate}

\smallskip
\item The results of the present paper have been shown for nonlinearities $\Phi$ that are \emph{locally Lipschitz}. In the model case $\Phi(u)=u|u|^{m-1}$ this corresponds to the constraint $m\ge1$, hence we are not considering the \emph{fractional fast diffusion equation} (where $ m \in (0,1) $). Indeed, there are technical issues in dealing with the latter case, in particular as concerns the following two points. First, we use in a crucial way the \emph{boundedness} of the function $a$ given in \eqref{adef}, which fails when $m<1$. Secondly, the fact that $\Phi$ is Lipschitz is exploited in the passage from \eqref{diff-eq-1} to \eqref{diff-eq-2}, in order to end up with a closed integral inequality. The problem of proving an analogue of Theorem \ref{teouni} for the fractional fast diffusion equation is therefore open.

\smallskip
\item The problems considered in \cite{TJJ} involve a more general class of integral operators with singular kernels satisfying suitable conditions, which does include the fractional Laplacian. Our techniques, that proved to be successful in order to remove the extra assumption $u-w\in L^1({\mathbb R}^d\times(0,T))$, necessary for the methods of \cite{TJJ} to work, require that a suitable extension operator is well defined and that properties similar to the ones established in Sections \ref{sec:pre} and \ref{mini} hold. It is possible, for instance, that a careful adaptation of our strategy could allow for generalizations to nonlinear evolution equations driven by spectral fractional powers of uniformly elliptic operators, since for the latter certain extension results are available (see e.g.~\cite{ST}).

\smallskip
\item The study of inhomogeneous equations of the form
$$
u_t+(-\Delta)^s \, \Phi(u)=f(x,t) \qquad \text{in } \mathbb{R}^d \times \mathbb{R}^+
$$
 is not performed here. Nevertheless, one can check that, at least under the assumption
 $$
 f \in L^\infty_{\mathrm{loc}}([0,+\infty);L^\infty(\mathbb{R}^d)) \, ,
 $$
 our arguments still work with some modifications. Indeed, the most relevant change is due to the fact that the solutions to the analogues of the approximate problems \eqref{eq17f} are not necessarily positive. In particular, the monotonicity property \eqref{eqqq} is lost, but this is not an issue since a limit solution as $ R \to \infty $ can still be constructed by standard compactness tools of the weak$^*$ topology in $ L^\infty $. As a consequence, the ``minimal'' solution may also not be positive somewhere; however, this does not affect the overall strategy. Finally, solutions will in general belong to $ L^\infty_{\mathrm{loc}}([0,+\infty);L^\infty(\mathbb{R}^d)) $, i.e.~they will not necessarily be globally bounded in time. Again, no relevant additional difficulties arise, since one can work on bounded time intervals approximating $ \mathbb{R}^+ $.

    \end{enumerate}

\appendix
\section{Proofs of technical lemmas}\label{app}

In order to lighten the reading of the paper, we postpone to this appendix the rigorous proofs of some technical facts we exploit in Sections \ref{sec:pre} and \ref{mini}, which are of key importance.

\subsection{Proof of Lemma \ref{lemma-nochetto}}\label{Appnochetto}
Given $R>0$ and $ \varphi \in L^2(B_R) $, the following representation formula for $ E_R(\varphi) $ holds:
\begin{equation}\label{eq300}
E_R(\varphi)(x,y) = \sum_{k=1}^{+\infty} \hat\varphi_k \, \phi_k(x) \, \psi_k(y) \qquad \forall (x,y)\in \mathcal C_R \, ,
\end{equation}
where for every $ k \in \mathbb{N} $
\begin{equation}\label{eq301}
\psi_k(y) := c_s \left(\sqrt{\lambda_k} y \right)^s K_s\!\left( \sqrt{\lambda_k} y \right) \qquad \forall y>0
\end{equation}
with
\begin{equation}\label{ee11}
c_s := \frac{2^{1-s}}{\Gamma(s)}
\end{equation}
and $K_s$ denotes the modified \emph{Bessel function} of the second kind (see e.g.~\cite[Chapter 9.6]{Ab}), which enjoys some key properties that we will now recall, referring to \cite[Section 2.4]{Noch} and the literature quoted therein for more details. Indeed, $ z \in (0,\infty) \mapsto K_s (z) $ is positive, smooth and satisfies
\begin{equation}\label{eq302}
\lim_{z \downarrow 0}  c_s \, z^s K_s(z) = 1 \, ,
\end{equation}
\begin{equation}\label{eq303}
\frac{\mathrm{d}}{\dz}\left[ z^s K_s(z) \right] = -z^s \, K_{1-s}(z) \qquad \forall z>0 \, ,
\end{equation}
\begin{equation}\label{eq304}
z \mapsto z^{\min\left\{s, \frac 1 2\right\}} e^z K_s(z) \ \, \text{is a decreasing function in } (0, \infty)\,.
\end{equation}
\begin{proof}[Proof of Lemma \ref{lemma-nochetto}]
As concerns \eqref{stima-L2}, thanks to \eqref{eq300}--\eqref{eq301} we have:
\begin{equation}\label{L2-aaa}
\left\| E_R(\varphi) (\cdot,y) \right\|_{L^2(B_R)}^2 = \sum_{k=1}^{\infty} \hat{\varphi}_k^2 \, \psi_k^2(y) = c_s^2 \sum_{k=1}^{\infty} \hat{\varphi}_k^2 \left(\sqrt{\lambda_k} y \right)^{2s} K_s^2\!\left( \sqrt{\lambda_k} y \right) \qquad \forall y >0 \,;
\end{equation}
on the other hand, properties \eqref{eq302} and \eqref{eq304} ensure that for every $ \theta \in (0,1) $ there exists a constant $C_\theta $ as in the statement such that
\begin{equation*}
c_s \, z^s K_s(z) \le C_\theta \, e^{-\theta z} \qquad \forall z >0 \, ,
\end{equation*}
whence
\begin{equation*}
\begin{gathered}
c_s^2 \sum_{k=1}^{\infty} \hat{\varphi}_k^2 \left(\sqrt{\lambda_k} y \right)^{2s} K_s^2\!\left( \sqrt{\lambda_k} y \right) \le C_\theta^2 \sum_{k=1}^{\infty} \hat{\varphi}_k^2 \, e^{-2\theta\sqrt{\lambda_k}y} \le C_\theta^2 \, e^{-2\theta\sqrt{\lambda_1}y} \sum_{k=1}^{\infty} \hat{\varphi}_k^2 \qquad \forall y > 0 \, ,
\end{gathered}
\end{equation*}
which combined with \eqref{L2-aaa} yields \eqref{stima-L2}. Similarly, we have:
 \begin{equation}\label{L2-ccc}
 \begin{aligned}
\left\| \nabla\!_x E_R(\varphi) (\cdot,y) \right\|_{L^2(B_R)}^2 = \sum_{k=1}^{\infty} \lambda_k \, \hat{\varphi}_k^2 \, \psi_k^2(y) = & \, c_s^2 \sum_{k=1}^{\infty} \lambda_k \,  \hat{\varphi}_k^2 \left(\sqrt{\lambda_k} y \right)^{2s} K_s^2\!\left( \sqrt{\lambda_k} y \right) \\
\le & \, c_s^2 \sum_{k=1}^{\infty} \lambda_k y^2\,  \hat{\varphi}_k^2 \left(\sqrt{\lambda_k} y \right)^{2s} K_s^2\!\left( \sqrt{\lambda_k} y \right) \quad \forall y \ge 1 \, .
\end{aligned}
\end{equation}
Still by \eqref{eq304} we can infer that, up to relabeling $C_\theta$, there holds
\begin{equation}\label{est-theta-2}
c_s \, z^{1+s} K_s(z) \le C_\theta \, e^{-\theta z} \qquad \forall z \ge \sqrt{\lambda_1}  \, ,
\end{equation}
from which
\begin{equation}\label{L2-ddd}
\left\| \nabla\!_x E_R(\varphi) (\cdot,y) \right\|_{L^2(B_R)}^2 \le C_\theta^2 \, e^{-2\theta\sqrt{\lambda_1}y} \sum_{k=1}^{\infty} \hat{\varphi}_k^2 \qquad \forall y \ge 1 \, .
\end{equation}
As for the derivative w.r.t.~$y$, by virtue of \eqref{eq303} we obtain
 \begin{equation}\label{L2-eee}
\left\| \partial_y E_R(\varphi) (\cdot,y) \right\|_{L^2(B_R)}^2 = \sum_{k=1}^{\infty} \hat{\varphi}_k^2 \left[ \psi_k^\prime(y) \right]^2 =  c_s^2 \sum_{k=1}^{\infty} \lambda_k \,  \hat{\varphi}_k^2 \left(\sqrt{\lambda_k} y \right)^{2s} K_{1-s}^2\!\left( \sqrt{\lambda_k} y \right) \quad \forall y > 0 \, ,
\end{equation}
so that the estimate
\begin{equation}\label{L2-fff}
\left\| \partial_y E_R(\varphi) (\cdot,y) \right\|_{L^2(B_R)}^2 \le C_\theta^2 \, e^{-2\theta\sqrt{\lambda_1}y} \sum_{k=1}^{\infty} \hat{\varphi}_k^2 \qquad \forall y \ge 1
\end{equation}
follows by reasoning exactly as above, up to replacing $ K_s $ with $ K_{1-s} $ in \eqref{est-theta-2}. Hence \eqref{stima-Hs} is a consequence of \eqref{L2-ddd} and \eqref{L2-fff} (possibly relabelling $ C_\theta $). In order to prove \eqref{limit-0}, first of all we write
\begin{equation}\label{diff-l2}
\begin{aligned}
\left\| E_R(\varphi)(\cdot,y) - \varphi \right\|^2_{L^2(B_R)} = & \sum_{k=1}^{\infty} \left[ 1 - c_s \left(\sqrt{\lambda_k} y \right)^s K_s\!\left( \sqrt{\lambda_k} y \right) \right]^2 \hat{\varphi}_k^2 \\
= &  \sum_{k=1}^{N} \left[ 1 - c_s \left(\sqrt{\lambda_k} y \right)^s K_s\!\left( \sqrt{\lambda_k} y \right) \right]^2 \hat{\varphi}_k^2 \\
& +  \sum_{k=N+1}^{\infty} \left[ 1 - c_s \left(\sqrt{\lambda_k} y \right)^s K_s\!\left( \sqrt{\lambda_k} y \right) \right]^2 \hat{\varphi}_k^2 \\
\le &
\sum_{k=1}^{N} \left[ 1 - c_s \left(\sqrt{\lambda_k} y \right)^s K_s\!\left( \sqrt{\lambda_k} y \right) \right]^2 \hat{\varphi}_k^2 + (1+M)^2 \sum_{k=N+1}^{\infty} \hat{\varphi}_k^2
\end{aligned}
\end{equation}
for any $ N \in \mathbb{N} $, where we denote by $ M>0 $ the supremum of $ z \mapsto c_s \, z^s K_s(z) $. It is then plain that \eqref{limit-0} follows by letting first $ y \downarrow 0 $ (using \eqref{eq302}) and then $ N \to \infty $.

Identity \eqref{stima-integrale} is a standard one, see e.g.~\cite[Section 4]{Vaz12}. In any case, it could be proved here upon using \eqref{Hs}, integrating the identities in \eqref{L2-ccc} and \eqref{L2-eee} w.r.t.~$ y^{1-2s} \dy $ and suitably taking advantage of \eqref{eq302}--\eqref{eq303}.

Finally, let us establish \eqref{limit}. Formula \eqref{eq300} entails
\begin{equation}\label{der-y}
y^{1-2s}\frac{\partial E_R(\varphi)(x,y)}{\partial y}=\sum_{k=1}^{\infty} \hat\varphi_k \, \phi_k(x) \, y^{1-2s} \, \psi_k'(y) \qquad \forall (x,y) \in \mathcal{C}_R \, .
\end{equation}
On the other hand, thanks to \eqref{eq303}, for every $ k \in \mathbb{N} $ there holds
\begin{equation}\label{eq306}
y^{1-2s}\psi'_k(y)= -c_s \, \sqrt{\lambda_k} \, y^{1-2s} \left(\sqrt{\lambda_k}y\right)^s K_{1-s}\!\left(\sqrt{\lambda_k}y\right) .
\end{equation}
Let $\alpha_s:=\min\!\left\{1-s, \tfrac{1}{2} \right\}$. Property \eqref{eq304} yields
\begin{equation}\label{eq307}
 K_{1-s}(z) \leq C \, z^{-\alpha_s} \, e^{-z} \qquad \forall z\geq 1 \, , \qquad \text{with } C:=e\,K_{1-s}(1) \, .
\end{equation}
Hence from \eqref{eq306}--\eqref{eq307} we can infer that
\begin{equation}\label{eq308}
y^{1-2s}\left|\psi'_k(y)\right| \leq c_s \, C \max_{z\geq 1}\left\{z^{1-s-\alpha_s}e^{-z}\right\}\lambda_k^s =: C^\prime \lambda_k^s \qquad \forall y \ge \frac{1}{\sqrt{\lambda_k}} \, ;
\end{equation}
similarly, recalling \eqref{eq302}, we have:
\begin{equation}\label{eq309}
\begin{gathered}
y^{1-2s}\left|\psi'_k(y)\right| \le c_s \, \lambda^s_k \left(\sqrt{\lambda_k} y\right)^{1-s} K_{1-s}\!\left(\sqrt{\lambda_k} y\right)\leq c_s \, \lambda_k^s \sup_{z\in (0, 1]}z^{1-s}K_{1-s}(z) =: C'' \lambda_k^s \\
\forall y \le \frac{1}{\sqrt{\lambda_k}} \, .
\end{gathered}
\end{equation}
Let $ \hat{C} := C' \vee C'' $. Arguing as in \eqref{diff-l2}, by virtue of \eqref{der-y}--\eqref{eq306} and \eqref{eq308}--\eqref{eq309}, recalling also \eqref{eq305}, we obtain
\[
\begin{aligned}
& \left\| (-\Delta)^s_R \, \varphi + \mu_s \, y^{1-2s}\frac{\partial E_R(\varphi)(\cdot, y)}{\partial y} \right\|^2_{L^2(B_R)}
\\
\leq  & \sum_{k=1}^N \left[ \lambda_k^s - \mu_s \, c_s \, \sqrt{\lambda_k} \, y^{1-2s} \left(\sqrt{\lambda_k}y\right)^s K_{1-s}\!\left(\sqrt{\lambda_k}y\right) \right]^2 \hat\varphi_k^2 +  \left(1+\hat{C}\right)^2 \sum_{k={N+1}}^{\infty} \lambda_k^{2s} \, \hat\varphi_k^2
 \end{aligned}
\]
for any $N\in \mathbb N$, whence \eqref{limit} can be established upon letting first $ y \downarrow 0 $, using \eqref{eq302}, \eqref{ee11}, \eqref{eq0}, and then letting $N\to\infty$, using \eqref{H2s}.
\end{proof}

\subsection{Proof of Lemma \ref{lem6}}\label{App2}

To begin with, we introduce a family of cut-off functions that are helpful to many purposes. That is, let $\eta\in C^\infty\!\left(\overline{\Omega}\right)$ satisfy
\begin{equation}\label{cutoff}
0 \le \eta \le 1 \quad \text{in} \ \overline{\Omega} \, , \quad \eta = 1 \quad \text{in} \ \Omega_1 \, , \quad \eta = 0 \quad \text{in} \ \Omega_2^c \, , \quad y^{-2s} \partial_y \eta \in L^\infty(\Omega) \, ,
\end{equation}
where for each $R>0 $ the set $ \Omega_R $ is defined in \eqref{eq10}. For every $ k \in \mathbb{N} $, we then set
\begin{equation}\label{cutoff-k}
\eta_k(x,y):=\eta\!\left(\frac{x}{k},\frac{y}{k}\right) \qquad \forall(x,y) \in \overline{\Omega} \, .
\end{equation}

\begin{proof}[Proof of Lemma \ref{lem6}]
Let $ k \in \mathbb{N} $, with $ k>R $. A straightforward computation shows that
\[
\dive\!\left[ y^{1-2s} \nabla\!\left(E(v) \eta_k\right) \right] = 2 y^{1-2s} \langle \nabla E(v), \nabla \eta_k \rangle + E(v) \dive\!\left(y^{1-2s}\nabla \eta_k \right) \qquad \text{in } \Omega \, ,
\]
whence by testing the above identity against $  E(\varphi) - E_R(\varphi) $ and integrating by parts, with $ E_R(\varphi) $ extended to zero in $ \Omega\setminus \mathcal C_R $, we have:
\begin{equation}\label{eq251}
\begin{aligned}
& \, \int_{\Omega} \left\langle \nabla E(v), \nabla E_R(\varphi) -\nabla E(\varphi) \right\rangle \eta_k \, y^{1-2s}  \dx \dy \\
= & \, 2 \int_{\Omega} \langle \nabla E(v), \nabla \eta_k  \rangle \left( E(\varphi) - E_R(\varphi) \right) y^{1-2s} \dx \dy \\
 & + \int_{\Omega} E(v) \dive\!\left( y^{1-2s}\nabla \eta_k \right) \left( E(\varphi) - E_R(\varphi) \right) \dx \dy \\
 & + \int_{\Omega} E(v)  \left\langle \nabla \eta_k , \nabla E(\varphi) - \nabla E_R(\varphi) \right\rangle y^{1-2s} \dx \dy \, ,
\end{aligned}
\end{equation}
where we have used the fact that $ E(\varphi) $ and $ E_R(\varphi) $ have the same trace on $ \partial \Omega $, and implicitly a local approximation of $ E(v) $ by regular functions. Hence, from \eqref{eq251} we deduce
\begin{equation}\label{eq252}
\begin{aligned}
\int_{\mathcal{C}_R} \left\langle \nabla E(v), \nabla E_R(\varphi) \right\rangle \eta_k \, y^{1-2s}  \dx \dy = & \, 2\int_{\Omega} \langle \nabla E(v), \nabla \eta_k  \rangle \left( E(\varphi) - E_R(\varphi) \right) y^{1-2s} \dx \dy \\
 & + \int_{\Omega} E(v) \dive\!\left(y^{1-2s}\nabla \eta_k \right) \left( E(\varphi) - E_R(\varphi) \right) \dx \dy  \\
 & + \int_{\Omega} E(v)  \left\langle \nabla \eta_k , \nabla E(\varphi) \right\rangle y^{1-2s} \dx \dy  \\
 & - \int_{\mathcal{C}_R} E(v)  \left\langle \nabla \eta_k , \nabla E_R(\varphi)  \right\rangle y^{1-2s} \dx \dy \\
 & + \int_{\Omega} \left\langle \nabla E(v), \nabla E(\varphi) \right\rangle \eta_k \, y^{1-2s}  \dx \dy \, .
\end{aligned}
\end{equation}
Integrating by parts the first and the last term in the r.h.s.~of \eqref{eq252}, along with \eqref{u2}, yields
\begin{equation}\label{eq252-b}
\begin{aligned}
\int_{\mathcal{C}_R} \left\langle \nabla E(v), \nabla E_R(\varphi) \right\rangle \eta_k \, y^{1-2s}  \dx \dy = & -\int_{\Omega} E(v) \dive\!\left(y^{1-2s}\nabla \eta_k \right) E(\varphi) \, \dx \dy\\
& + \int_{\mathcal{C}_R} E(v) \dive\!\left(y^{1-2s}\nabla \eta_k \right) E_R(\varphi) \, \dx \dy \\
 & - 2 \int_{\Omega} E(v)  \left\langle \nabla \eta_k , \nabla E(\varphi)  \right\rangle y^{1-2s} \dx \dy  \\
 & + \int_{\mathcal{C}_R} E(v)  \left\langle \nabla \eta_k , \nabla E_R(\varphi)  \right\rangle y^{1-2s} \dx \dy \\
&+\frac 1{\mu_s}\int_{\mathbb R^d} v(x) \, (-\Delta)^s \varphi(x) \, \eta_k(x,0) \, \dx \, .
\end{aligned}
\end{equation}
Recalling \eqref{eq52}--\eqref{eq53}, \eqref{cutoff}--\eqref{cutoff-k} and the uniform boundedness of $ E(v) $, it is not difficult to obtain the following estimates:
\begin{equation*}
\begin{gathered}
\left| E(v)  \left\langle \nabla \eta_k , \nabla E(\varphi)  \right\rangle y^{1-2s} \right| \leq \frac{C}{k \left| (x,y) \right|^{d+2s}} \, \chi_{\Omega_{2k}\setminus \Omega_k} \le \frac{C}{\left| (x,y) \right|^{d+2s+1}} \, \chi_{\Omega_{2k}\setminus \Omega_k}   \, , \\
\left| E(v) \dive\!\left(y^{1-2s}\nabla \eta_k \right) E(\varphi) \right| \leq C \, \frac{k+y}{k^2 \left| (x,y) \right|^{d+2s}} \, \chi_{\Omega_{2k}\setminus \Omega_k} \le \frac{C}{\left| (x,y) \right|^{d+2s+1}} \, \chi_{\Omega_{2k}\setminus \Omega_k} \, ,
\end{gathered}
\end{equation*}
where $ C>0 $, here and below, is a suitable constant independent of $k$. Since
$$ (x,y) \mapsto \frac{1}{\left|(x,y)\right|^{d+2s+1}} \in L^1\!\left( \Omega_1^c \right) , $$
we can assert that
\begin{equation}\label{e100}
\lim_{k \to \infty} \left| \int_{\Omega} E(v) \dive\!\left(y^{1-2s}\nabla \eta_k \right) E(\varphi) \, \dx \dy \right| + \left| \int_{\Omega} E(v)  \left\langle \nabla \eta_k , \nabla E(\varphi)  \right\rangle y^{1-2s} \dx \dy \right|  =  0 \, .
\end{equation}
Thanks to the assumptions on $ \nabla E(v) $, the uniform boundedness of $ E(v) $ and
$$
E_R(\varphi) , \nabla E_R(\varphi) \in L^2\!\left(\mathcal C_R ; e^{\sqrt{\lambda_1}y} y^{1-2s} \dx\dy \right) ,
$$
which trivially follow from \eqref{stima-L2}, \eqref{stima-Hs}, \eqref{stima-integrale}, it is apparent that
\begin{equation}\label{L2-a}
\left\langle \nabla E(v), \nabla E_R(\varphi) \right\rangle , \ E(v)E_R(\varphi) , \ E(v)\nabla E_R(\varphi) \in L^1\!\left(\mathcal C_R ; y^{1-2s} \dx\dy \right) .
\end{equation}
Upon observing that
\begin{equation*}
\begin{gathered}
\left| E(v) \dive\!\left(y^{1-2s}\nabla \eta_k \right) E_R(\varphi) \right| \le \frac{C}{k^2} \left| E(v) E_R(\varphi) \right| \chi_{\left(\Omega_{2k} \setminus \Omega_k \right) \cap \mathcal{C}_R} \, y^{1-2s} \, , \\
\left| E(v)  \left\langle \nabla \eta_k , \nabla E_R(\varphi)  \right\rangle \right| y^{1-2s} \le \frac{C}{k} \left| E(v) \nabla E_R(\varphi) \right| \chi_{\left(\Omega_{2k} \setminus \Omega_k \right) \cap \mathcal{C}_R} \, y^{1-2s} \, ,
\end{gathered}
\end{equation*}
by exploiting \eqref{e100} and \eqref{L2-a} we can finally let $ k \to \infty $ in \eqref{eq252-b}, which yields \eqref{eq250}.
\end{proof}

\subsection{Proof of Lemma \ref{lem7}}\label{App3}

We first define another family of useful cut-off functions. Let $ \xi \in C^\infty([0,\infty)) $ satisfy
\begin{equation}\label{cutoff-y}
0 \le \xi \le 1 \quad \text{in} \ [0,\infty) \, , \qquad \xi = 1 \quad \text{in} \ [0,1] \, , \qquad \xi = 0 \quad \text{in} \ [2,\infty]\,.
\end{equation}
For every $R\ge 1$ put
\begin{equation}\label{eq:cutoff}
\gamma_R(x):=\xi\!\left(\frac{|x|}{R}\right) \qquad \forall x \in \mathbb{R}^d \,.
\end{equation}

\begin{proof}[Proof of Lemma \ref{lem7}]
It is convenient to first work with $ \{ v_\varepsilon \}_{\varepsilon>0} $, the latter being the convolution of $ v $ against a standard positive, compactly supported, regular kernel $\eta_\varepsilon$. Since we know that $ v , (-\Delta)^s v \in L^\infty(\mathbb{R}^d) $, one can show that
\begin{equation}\label{lap-conv}
\left[ (-\Delta)^s v \right]_\varepsilon = (-\Delta)^s (v_\varepsilon)
\end{equation}
and
\begin{equation}\label{lap-conv-2}
\lim_{R \to \infty} \int_{\mathbb{R}^d} (-\Delta)^s\! \left(v_\varepsilon \gamma_R \right) \varphi \, \dx = \int_{\mathbb{R}^d} (-\Delta)^s\! \left(v_\varepsilon \right) \varphi \, \dx \qquad \forall \varphi \in C^\infty_c(\mathbb{R}^d) \, .
\end{equation}
For a rigorous justification of these facts, we refer e.g.~to the proofs of \cite[Lemma 3.5 and Proposition 3.6]{Mur}. Given any $ R,r \ge 1 $, a simple integration by parts that exploits the regularity of the functions involved along with \eqref{eq-rho-2} yields
\begin{equation}\label{energy-1}
\begin{aligned}
\, & \int_\Omega \left| \nabla E\!\left( v_\varepsilon \gamma_R \right) \right|^2 \left( \gamma_r \rho_\alpha \right) y^{1-2s} \dx\dy \\
= \, & -\int_\Omega \left\langle \nabla E\!\left( v_\varepsilon \gamma_R \right) , \nabla \! \left( \gamma_r \rho_\alpha \right) \right\rangle E\!\left( v_\varepsilon \gamma_R \right) y^{1-2s}\dx\dy + \frac{1}{\mu_s} \int_{\mathbb{R}^d} (-\Delta)^s\! \left(v_\varepsilon \gamma_R \right) v_\varepsilon \gamma_R \gamma_r \, \dx \, .
\end{aligned}
\end{equation}
With no loss of generality we can and will assume that $ \gamma_r $ complies with
\begin{equation}\label{y1}
\left| \nabla \gamma_r(x) \right|^2 \le C \, \gamma_r(x) \qquad \forall x \in \mathbb{R}^d \, ,
\end{equation}
since $C$ is large enough by assumption. Thanks to Young's inequality, \eqref{eq-rho-1} and \eqref{y1}, we obtain:
\begin{equation}\label{young}
\begin{aligned}
& \left| \int_\Omega \left\langle \nabla E\!\left( v_\varepsilon \gamma_R \right) , \nabla \! \left( \gamma_r \rho_\alpha \right) \right\rangle E\!\left( v_\varepsilon \gamma_R \right) y^{1-2s} \dx\dy \right| \\
\le & \, \frac{1}{2} \int_\Omega \left| \nabla E\!\left( v_\varepsilon \gamma_R \right) \right|^2 \left( \gamma_r \rho_\alpha \right) y^{1-2s} \dx\dy + \left( \kappa+C^2 \right) \int_{\mathcal{C}_{2r}} \left| E\!\left( v_\varepsilon \gamma_R \right) \right|^2 \rho_\alpha \, y^{1-2s} \dx\dy \, .
\end{aligned}
\end{equation}
Hence \eqref{energy-1}--\eqref{young} imply
\begin{equation}\label{energy-2}
\begin{aligned}
\, & \int_{\mathcal{C}_r} \left| \nabla E\!\left( v_\varepsilon \gamma_R \right) \right|^2 \rho_\alpha \, y^{1-2s}  \dx\dy \\
\le \, & \frac{2}{\mu_s} \int_{\mathbb{R}^d} (-\Delta)^s\! \left(v_\varepsilon \gamma_R \right) v_\varepsilon \gamma_R \gamma_r \, \dx + 2 \left( \kappa + C^2 \right) \| v \|_\infty^2 \left|B_{2r}\right| \int_0^{\infty} \rho_\alpha(y) \, y^{1-2s} \, \dy  \, .
\end{aligned}
\end{equation}
Finally, estimate \eqref{est-energy} follows by letting first $ R \to \infty $ and then $ \varepsilon \downarrow 0 $ in \eqref{energy-2}, upon exploiting \eqref{lap-conv-2}, \eqref{lap-conv}, the local convergence of the convolution (e.g.~in $ L^2_{\mathrm{loc}}(\mathbb{R}^d) $) and the plain fact that the extension operator is stable under all of these passages.
\end{proof}

\par\bigskip\noindent
\textbf{Acknowledgments.}   GG is partially supported by the PRIN project ``Equazioni alle derivate parziali di tipo ellittico e parabolico: aspetti geometrici, disuguaglianze collegate, e applicazioni'' (Italy). All authors are members of the ``Gruppo Nazionale per l'Analisi Matematica, la Probabilit\`a e le loro Applicazioni'' (GNAMPA) of the ``Istituto Nazionale di Alta Matematica'' (INdAM, Italy). We are grateful to Yannick Sire for helpful discussions.

\end{document}